\documentclass[11pt]{amsart}
\usepackage{titletoc}
\usepackage[utf8]{inputenc}
\usepackage[T1]{fontenc}
\usepackage{indentfirst}
\usepackage[margin=1in]{geometry} 
\usepackage{lipsum}
\usepackage{amsmath,amscd,amsthm,amssymb,mathrsfs,stmaryrd,color,tikz-cd,graphicx,tikz,mathtools,subfigure}
\tikzcdset{arrow style=tikz,diagrams={>=stealth}}
\usepackage{enumerate}
\usepackage[bookmarks, bookmarksdepth=2, colorlinks=true, linkcolor=blue, citecolor=red, urlcolor=blue,pagebackref]{hyperref}
\usepackage{subfiles}
\renewcommand{\epsilon}{\varepsilon}

\newtheorem{theorem}{Theorem}[section]

\newtheorem{lemma}[theorem]{Lemma}
\newtheorem{corr}[theorem]{Corollary}

\newtheorem{proposition}[theorem]{Proposition}
\newtheorem{deff}[theorem]{Definition}

\newcommand{\bth}{\begin{theorem}}
\newcommand{\ble}{\begin{lemma}}
\newcommand{\bcor}{\begin{corr}}

\newcommand{\bdeff}{\begin{deff}}

\newcommand{\bprop}{\begin{proposition}}
\newcommand{\ele}{\end{lemma}}
\newcommand{\ecor}{\end{corr}}
\newcommand{\edeff}{\end{deff}}

\newcommand{\eprop}{\end{proposition}}

\newcommand{\la}{\lambda}

\newcommand{\e}{\varepsilon}

\newcommand{\supp}{\text{supp }}
\renewcommand{\Pi}{\varPi}
\renewcommand{\Re}{\rm{Re} \,}
\renewcommand{\Im}{\rm{Im} \,}
\renewcommand{\epsilon}{\varepsilon}
\newcommand{\sgn}{{\text {sgn}}}

\newcommand{\R}{{\mathbb R}}
\newcommand{\Z}{{\mathbb Z}}

\newcommand{\diag}{\Upsilon^{\text{diag}}}

\newcommand{\far}{\Upsilon^{\text{far}}}

\usetikzlibrary{matrix,arrows,decorations.pathmorphing,cd,topaths,calc,graphs}
\usepackage[all,cmtip]{xy}

\usepackage{relsize}

\renewcommand{\Re}{\operatorname{Re}}
\renewcommand{\Im}{\operatorname{Im}}

\makeatletter
\newcommand{\subalign}[1]{%
  \vcenter{%
    \Let@ \restore@math@cr \default@tag
    \baselineskip\fontdimen10 \scriptfont\tw@
    \advance\baselineskip\fontdimen12 \scriptfont\tw@
    \lineskip\thr@@\fontdimen8 \scriptfont\thr@@
    \lineskiplimit\lineskip
    \ialign{\hfil$\m@th\scriptstyle##$&$\m@th\scriptstyle{}##$\crcr
      #1\crcr
    }%
  }
}
\makeatother

\newcommand{\BC}{\mathbb{C}}

\newcommand{\BH}{\mathbb{H}}

\newcommand{\BQ}{\mathbb{Q}}
\newcommand{\BR}{\mathbb{R}}

\newcommand{\BZ}{\mathbb{Z}}

\newcommand{\bA}{\mathbf{A}}

\newcommand{\sA}{\mathscr{A}}
\newcommand{\bB}{\mathbf{B}}

\newcommand{\cD}{\mathcal{D}}

\newcommand{\sH}{\mathscr{H}}

\newcommand{\sM}{\mathscr{M}}

\newcommand{\sN}{\mathscr{N}}

\newcommand{\cO}{\mathcal{O}}

\newcommand{\cS}{\mathcal{S}}

\newcommand{\cT}{\mathcal{T}}

\newcommand{\fg}{\mathfrak{g}}

\newcommand{\vertiii}[1]{{\left\vert\kern-0.25ex\left\vert\kern-0.25ex\left\vert #1 
    \right\vert\kern-0.25ex\right\vert\kern-0.25ex\right\vert}}

\newcommand{\rom}[1]{\uppercase\expandafter{\romannumeral #1\relax}}

\renewcommand{\tilde}[1]{\widetilde{#1}}

\newcommand{\GL}{\operatorname{GL}}
\newcommand{\SL}{\operatorname{SL}}

\makeatletter
\newcommand{\colim@}[2]{%
  \vtop{\m@th\ialign{##\cr
    \hfil$#1\operator@font lim$\hfil\cr
    \noalign{\nointerlineskip\kern1.5\ex@}#2\cr
    \noalign{\nointerlineskip\kern-\ex@}\cr}}%
}
\newcommand{\Lim}{%
  \mathop{\mathpalette\varlim@{\leftarrowfill@\scriptscriptstyle}}\nmlimits@
}

\newcommand{\colim}{%
  \mathop{\mathpalette\varlim@{\rightarrowfill@\scriptscriptstyle}}\nmlimits@
}

\makeatother

\theoremstyle{definition}

\theoremstyle{remark}
\newtheorem*{rmk}{Remark}

\usepackage[new]{old-arrows}

\usepackage{amssymb}

\usepackage[utf8]{inputenc}

\title[$L^q$-norm bounds for arithmetic eigenfunctions]{$L^q$-norm bounds for arithmetic eigenfunctions via microlocal Kakeya-Nikodym estimate}
\author{Jiaqi Hou and Xiaoqi Huang}
\address{Department of Mathematics, Louisiana State University, Baton Rouge, LA 70803, USA}
\email{jhou7@lsu.edu}
\email{xhuang49@lsu.edu}
\date{}

\usepackage{hyperref}
\numberwithin{equation}{section}

\begin{document}

\begin{abstract}
    Let $X$ be a compact arithmetic congruence hyperbolic surface, and let $\psi$ be an $L^2$-normalized Hecke-Maass form on $X$ with sufficiently large spectral parameter $\lambda$. We give a new proof to obtain a power saving for the global $L^6$-norm $\|\psi\|_{L^6(X)}\lesssim_\varepsilon\lambda^{\frac{5}{36}+\varepsilon}$ over the local bound $\|\psi\|_{L^6(X)}\lesssim\lambda^{\frac{1}{6}}$ of Sogge. Our method uses a microlocal decomposition for $\psi$ and reduces the $L^6$-norm problem to microlocal Kakeya-Nikodym estimates for $\psi$, and we establish improved microlocal Kakeya-Nikodym estimates via arithmetic amplification developed by Iwaniec and Sarnak.
\end{abstract}

\maketitle

\section{Introduction}
Let $\Gamma$ be the group of the norm one units of a maximal order of an indefinite quaternion division algebra over $\BQ$ and let $\BH$ be the upper half plane.
The purpose of this paper is to use the amplification method to obtain improved $L^q$-norm estimates for an $L^2$-normalized Hecke-Maass form $\psi$, with spectral parameter $\la\gg1$, on the compact arithmetic congruence hyperbolic surface $X=\Gamma\backslash\BH$ for the critical exponent $q=6$, which in turn also yield improved estimates for all $q>2$ by interpolation.

\subsection{\texorpdfstring{$L^q$}{Lq}-norm bounds for Laplace eigenfunctions}
We first review the estimates of Laplace eigenfunctions on manifolds.
Let $(M,g)$ be a  compact $n$-dimensional Riemannian manifold without boundary, 
$\Delta_g$ be the Laplace-Beltrami operator associated with the metric $g$ on $M$. We let $0=\la_0^2<\la^2_1\le \la^2_2\le \cdots$ denote the eigenvalues labeled with respect to the multiplicity of ${-\Delta_g}$ and $e_{\la_j}$ the associated $L^2$-normalized eigenfunctions, that is,
\begin{equation}\label{1.2}
(\Delta_g+\la^2_j )e_{\la_j}=0, \quad
\text{and } \quad \int_M |e_{\la_j}(x)|^2 \, dx=1.
\end{equation}

In Sogge~\cite{sogge881} (see also Avakumovi\'c \cite{Avakumovic} and Levitan \cite{Levitan} when $q = \infty$), the universal bound for $q\ge2$
\begin{equation}\label{r1}
\|e_\la\|_{L^q(M)}\lesssim \la^{\delta(q,n)}
\end{equation}
was obtained. Here $\delta(q,n)$ is given by
\begin{equation*}
            \delta(q,n) = \begin{dcases}
                \tfrac{n-1}{4} - \tfrac{n-1}{2q} \;&\text{ for }\;2\leq q\leq q_c,\\
                \tfrac{n-1}{2} - \tfrac{n}{q} \;\;\;\;&\text{ for }\;q_c\leq q\leq\infty,
            \end{dcases}
            \quad\text{ and }\quad q_c=\tfrac{2(n+1)}{n-1}.
\end{equation*}
The result also applies to quasimodes that are linear combinations of eigenfunctions whose eigenvalues lie in the unit interval $[\la,\la+1]$. 
When $M$ is a compact locally symmetric space of higher rank, besides the Laplace-Beltrami operator, one can consider eigenfunctions of the full ring of invariant differential operators on $M$. The general local bound was obtained by Sarnak \cite{sarnak2004letter} (for $q=\infty$) and Marshall \cite{Mar16HigerRank} (for any $q\ge2$ with some log losses at the corresponding critical exponents), which extends and improves the bound \eqref{r1}. 

The estimate \eqref{r1} is sharp on the standard round sphere $\mathbb{S}^n$ by taking $e_\la$ to be the zonal functions or the highest weight spherical harmonics, which are point-focusing and geodesic-focusing eigenfunctions, respectively. Under additional geometric assumptions--such as nonpositive sectional curvature or on flat tori, one can improve \eqref{r1}. See e.g., the work of Bourgain-Demeter \cite[Section 2.3]{BoDe}, as well as  Germain-Myerson \cite{germain2022bounds}, and the reference therein for related results on the torus. On compact manifolds with non-positive sectional curvature,
$(\log\la)^{-\frac12}$ improvements to \eqref{r1} was obtained in B\'erard \cite{Berard} for $q=\infty$, and by Hassell and Tacy \cite{HassellTacy} for $q>q_c=\tfrac{2(n+1)}{n-1}$. The same improvement also holds, more generally, for spectral projection operators onto the shorter interval $[\la,\la+(\log\la)^{-1}]$. These results were further generalized under weaker dynamical assumptions in the recent work of Canzani-Galkowski \cite{CGGrowth, canzani2023weyl}. 

In the other direction, for $q\le q_c$ similar improvements were obtained in Huang--Sogge \cite{huang2024curvature}. More generally, they proved sharp quasimode estimates for logarithmic quasimodes, which were also used to characterize compact manifolds with constant curvature. See also Blair and Sogge \cite{blair2018concerning,SBLog} and Blair, Huang, and Sogge \cite{blair2024improved} for related earlier work in this direction.

We also note that when $q<q_c$, for a general compact manifold without boundary, a stronger version of \eqref{r1} for $L^2$-normalized eigenfunctions was obtained in Blair and Sogge \cite{blair2015refined} 
\begin{equation}\label{r1a}
\|e_\la\|_{L^q(M)}\lesssim \la^{\delta(q,n)}\vertiii{e_\la}^{\frac{q_c}{q}-1}_{KN},\quad\text{ where }\; \tfrac{2(n+2)}{n}<q<q_c.
\end{equation}
Here the Kakeya–Nikodym norm is defined by 
\[
\vertiii{e_\la}^{2}_{KN}
= \sup_{\gamma \in \Pi}
\int_{\mathcal{T}_{\la^{-1/2}}(\gamma)}
|e_\la|^2 \, dV_g,
\]
where $\Pi$ denotes the set of unit-length geodesic segments in $M$ and ${\mathcal{T}_{\la^{-1/2}}(\gamma)}$ denote the $\la^{-1/2}$ neighborhood of a given geodesic $\gamma$ segment.  The Kakeya–Nikodym norm is closely related to the Nikodym maximal operator on manifolds (as well as its Euclidean counterpart, the Kakeya maximal operator), both of which have been extensively studied due to their central role in harmonic analysis. See, for example, \cite{sogge2011kakeya} and the references therein for further background.

The estimate \eqref{r1a} generalizes earlier related work of Bourgain \cite{bourgain2009geodesic}, Sogge \cite{sogge2011kakeya}, and Blair and Sogge \cite{BlairSoggeKaknik,BlairSoggeRefined}. The proofs rely fundamentally on bilinear oscillatory integral estimates in harmonic analysis.
More recently, \eqref{r1a} was extended to the larger range $\tfrac{2(3n+1)}{3n-3}\le q<q_c$ in dimensions $n\ge 3$ by Gao, Wu and Xi \cite{gao2025sharp}, up to a loss of $\la^\e$. Their argument builds on recent advances in Fourier restriction theory, including refined decoupling estimates.

\subsection{\texorpdfstring{$L^q$}{Lq}-norm bounds for Hecke-Maass forms}
The paper concerns improvements of \eqref{r1} for arithmetic eigenfunctions on hyperbolic surfaces. Recall that $X=\Gamma\backslash\BH$ is an arithmetic congruence hyperbolic surface, where $\Gamma\subset \SL(2,\BR)$ is taken to be the norm one units of a maximal order of an indefinite quaternion division algebra over $\BQ$ (see Section \ref{sec: arith hyper sur} for the precise definitions) so that $X$ is compact. Let $\psi$ be an $L^2$-normalized Hecke–Maass form on $X$. Namely, $\int_X|\psi(x)|^2dx=1$ and $\psi$ is a joint eigenfunction of the Laplace-Beltrami operator $\Delta_g$ and Hecke operators on $X$. Let $\la$ be the spectral parameter of $\psi$, so that $\Delta \psi+(\frac14+\lambda^2)\psi=0$. As we are considering large-eigenvalue asymptotics, we will also assume that $\la$ is real and $\lambda\gg1$. Iwaniec and Sarnak \cite{IS95} made a Lindel\"of hypothesis type conjecture that $\|\psi\|_{L^\infty(X)}\lesssim_\epsilon\la^{\epsilon}$, which is strong and widely open. The conjecture implies that $\|\psi\|_{L^q(X)}\lesssim_\epsilon\la^{\epsilon}$ for any $q>2$.

For these $X$ and $\psi$, the first power-saving result was obtained by Iwaniec and Sarnak \cite{IS95}. They showed that 
\begin{align}\label{eq: IS bound}
    \|\psi\|_{L^\infty(X)} \lesssim_\epsilon\lambda^{\frac{5}{12}+\epsilon},
\end{align}
which is a $\la^{\frac1{12}}$ improvement over the uniform bound \eqref{r1} for $n=2$ and $q=\infty$. In two-dimensional cases, the critical exponent is $q_c=6$, so, by interpolation, \eqref{eq: IS bound} implies an improvement for  $ \| \psi \|_{L^q(X)}$ for all $q>6$. Their proof uses the technique known as arithmetic amplification. This approach has since been used by many authors to bound sup-norms of Hecke-Maass forms on other groups. For instance, see \cite{Van97,BM13,Mar14,BHM16,BM16,BP16,BHM20} for results in the spectral aspect.

For $q<6$, Marshall \cite{Mar16} studied the related $L^2$ geodesic restriction problem for the Hecke-Maass form $\psi$ and obtained a similar $\la$-power improvement over the uniform result of Burq,  G{\'e}rard and Tzvetkov \cite{BGTrestr}. We let $\Pi$ denote the set of all unit-length geodesic segments on $X$. Marshall obtained the uniform bound
\begin{align}\label{eq: Marshall geodesic bound}
    \sup_{\gamma\in\Pi} \| \psi|_\gamma \|_{L^2(\gamma)}\lesssim_\epsilon \lambda^{\frac{3}{14}+\epsilon}.
\end{align}
He reduced $L^2$-norm bounds for $\psi$ along $\gamma\in\Pi$ to bounds for various
Fourier coefficients along $\gamma$.
His improvements over the local bounds extend the technique of arithmetic amplification developed by Iwaniec-Sarnak.
Using the result of Blair and Sogge in  \eqref{r1a}, the geodesic restriction bound \eqref{eq: Marshall geodesic bound} implies that
\begin{align}\label{eq: Marshall L4}
    \| \psi \|_{L^4(X)}\lesssim_\epsilon \la^{\frac{1}{8}-\frac{1}{56}+\epsilon}.
\end{align}
Moreover, using \eqref{eq: Marshall geodesic bound} and \eqref{r1a} also yields an improvement over \eqref{r1} for all $2<q<6$ by interpolation. See \cite{hou2024restrictions,hou2025kakeya} for the extension of this method to improve bounds for Kakeya-Nikodym norms on other groups of rank one. For the higher rank group $\SL(3,\BR)$, \eqref{eq: Marshall geodesic bound} was extended by Marshall \cite{Mar15} to $L^2$ maximal flat restrictions of Hecke-Maass forms. 

It would be interesting to extend the above results and obtain a power saving over \eqref{r1} for the critical exponent $q_c=6$. A much stronger $L^4$-norm bound than \eqref{eq: Marshall L4} was obtained by Humphries and Khan \cite[Theorem 1.4]{HK25}:
\begin{align}\label{eq: HK L4}
    \| \psi \|_{L^4(X)}\lesssim_\epsilon \la^{\frac{3}{152}+\epsilon}.
\end{align}
They applied Parseval’s identity to $|\psi|^2$ and used the Watson–Ichino triple product formula to reduce bounds for $\| \psi \|_{L^4(X)}$ to bounds for certain moments of $L$-functions. By interpolating with the sup-norm bound \eqref{eq: IS bound} of Iwaniec-Sarnak, Humphries and Khan got
\begin{align}\label{eq: HK L6}
    \| \psi \|_{L^6(X)}\lesssim_\epsilon \la^{\frac{26}{171}+\epsilon}=\la^{\frac{1}{6}-\frac{5}{342}+\epsilon}.
\end{align}

On the noncompact quotient $\SL(2,\BZ)\backslash\BH$, for a Hecke-Maass cusp form $\psi$ whose spectral parameter is $\la$, both the sup-norm bound \eqref{eq: IS bound} and the $L^4$-norm bound \eqref{eq: HK L4} hold as well, so one still has the same $L^6$-norm bound \eqref{eq: HK L6}. Ki \cite{ki20234} showed an optimal bound $\|\psi\|_{L^4(\SL(2,\BZ)\backslash\BH)}\lesssim_\epsilon\la^{\epsilon}$, by using the
Fourier-Whittaker expansions of Hecke-Maass cusp forms. This result justifies Iwaniec-Sarnak's conjecture for $2<q\leq4$, and therefore implies a better $L^6$-norm bound
\begin{align}\label{eq: Ki L6}
    \| \psi \|_{L^6(\SL(2,\BZ)\backslash\BH)}\lesssim_\epsilon \la^{\frac{1}{6}-\frac{1}{36}+\epsilon}.
\end{align}

We prove the following improved $L^6$-norm bound over \eqref{eq: HK L6} in the \textit{compact} setting.
\begin{theorem}\label{thma}
    Let $\psi$ be an $L^2$-normalized Hecke–Maass form on $X$ with spectral parameter $\lambda\gg1$. We have
    \begin{equation*}
        \| \psi \|_{L^6(X)} \lesssim_\epsilon
        \lambda^{\frac{1}{6}-\frac{1}{36}+\epsilon}.
    \end{equation*}
\end{theorem}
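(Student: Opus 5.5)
The plan is to follow the strategy announced in the abstract: decompose $\psi$ microlocally and reduce the critical $L^6$ estimate to a microlocal Kakeya--Nikodym bound, which arithmetic amplification then improves. The loss $\la^{1/6}$ is attained in the model case by eigenfunctions concentrating in a single $\la^{-1/2}$-tube; for our purposes this scale is too coarse. I would work at an intermediate scale $\theta=\la^{-\delta}$ with $\delta\in(0,1/2]$ to be optimized. Let $\{Q_\nu\}$ be a partition of the unit cosphere bundle into pieces of angular width $\theta$, and smooth these to order-zero pseudodifferential cutoffs $A_\nu$, so that $\psi=\sum_\nu A_\nu\psi+O(\la^{-N})$. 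Locally, $A_\nu\psi$ is a function whose frequency lives in a $\la\theta\times\la$ sector, i.e.\ it is essentially a superposition of wave packets along geodesics in direction $\nu$, supported in tubes of width $\theta$ (up to $\la^{-1/2}$).

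\textbf{Step 1 (harmonic-analytic reduction).} Using the bilinear/Whitney decomposition $\psi^2=\sum_{\nu,\nu'}A_\nu\psi\,A_{\nu'}\psi$ into diagonal pairs (angle $\lesssim\theta$) and far pairs (angle $\sim 2^k\theta$), I would prove, in the style of Blair--Sogge \eqref{r1a} but at the endpoint $q=6$ with an $\e$-loss, an inequality
\begin{equation*}
\|\psi\|_{L^6(X)}^6\lesssim_\e \la^{1+\e}\,\Big(\sup_{\nu}\sup_{\gamma}\int_{\mathcal T_{\theta}(\gamma)}|A_\nu\psi|^2\Big)^{a}+\la^{1-b\delta+\e},
\end{equation*}
for suitable exponents $a,b>0$. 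Here $\gamma$ ranges over unit geodesic segments in direction $\nu$. The far terms are handled by bilinear oscillatory integral estimates (Lee's bilinear restriction in variable-coefficient form), which give a gain in the transverse separation. The diagonal terms are bounded by rescaling the sector to a $\theta$-neighborhood of a geodesic, together with Sogge's local estimate, so that each $A_\nu\psi$ contributes its tube mass. The $\la^\e$ loss absorbs the logarithmic number of dyadic angle scales.

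\textbf{Step 2 (arithmetic input).} Next I would show a microlocal Kakeya--Nikodym bound of the form $\int_{\mathcal T_\theta(\gamma)}|A_\nu\psi|^2\lesssim_\e \la^{\e}(\theta+\la^{-c})$, with an explicit $c>0$. The trivial bound is $\theta^{?}$, and without arithmetic one only gets mass $\lesssim\theta$ when $\theta\ge\la^{-1/2}$; the needed gain is at the smallest scales. To get it, realize $|A_\nu\psi|^2$ on the tube through a pre-trace formula with a test kernel $k$ concentrating on wave packets along $\gamma$, namely a Fourier integral kernel microlocalized to direction $\nu$ near $\gamma$. Then insert an Iwaniec--Sarnak amplifier $\sum_{\ell\le L}\alpha_\ell T_\ell$ on Hecke operators. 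This bounds $L^{-1}\int|A_\nu\psi|^2$ by
\begin{equation*}
\sum_{\ell,\ell'}|\alpha_\ell\alpha_{\ell'}|\,(\ell\ell')^{-1/2}\sum_{g\in R(\ell\ell')}|\langle k(\cdot,g\cdot)\rangle|.
\end{equation*}
The identity term gives $\theta/L$ (up to $\la^\e$). For the remaining terms, $\langle k(\cdot,g\cdot)\rangle$ is small unless $g$ nearly preserves the tube and direction, i.e.\ lies within $\theta$-ish of the one-parameter diagonal subgroup stabilizing $\gamma$. The count of such lattice points of norm $\ell\ell'$ near a torus, as done by Marshall in proving \eqref{eq: Marshall geodesic bound}, is $\lesssim_\e (\ell\ell')^{\e}(1+L^{A}\theta^{B})$, whereas elements with $g\gamma$ transverse to $\gamma$ contribute through the kernel decay $(\la\theta\cdot\text{angle})^{-1/2}$.

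\textbf{Step 3 (optimization).} Finally, combine Steps 1 and 2 and choose $L$ and $\delta$ to balance the terms, aiming for $\|\psi\|_6^6\lesssim\la^{1-1/6+\e}$, i.e.\ exponent $\tfrac16-\tfrac1{36}$.

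The main obstacle is Step 2: controlling the nontrivial Hecke returns that nearly fix the geodesic together with its direction. This requires a diophantine count of quaternion elements near the stabilizer torus, uniform in $\gamma$, together with sharp stationary-phase bounds for the microlocalized kernel. I would first try to adapt Marshall's Fourier-coefficient counting. The constants $a,b$ in Step 1 at the endpoint $q_c=6$ are the secondary difficulty.
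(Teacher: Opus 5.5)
Your Step~1 cannot hold, and this is a missing ingredient rather than a matter of constants. At the critical exponent $q_c=6$, Sogge's bound is saturated not only by geodesic-focusing functions but also by point-focusing ones, and the latter have small microlocal Kakeya--Nikodym norms. A zonal harmonic on $S^2$, or a point-focusing quasimode $\sigma_\lambda f$ on $X$, has $\|\cdot\|_{L^6}^6\sim\lambda$, while $\|A_\nu(\cdot)\|_{L^2}^2\lesssim\theta$ for every $\nu$. The tools you invoke are purely local (they only use $\psi=\sigma_\lambda\psi$), so your argument would apply to these examples and bound $\|\cdot\|_{L^6}^6$ by $O(\lambda^{1-\min(a,b)\delta+\varepsilon})$, a contradiction. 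The argument fails on the transverse pairs. The bilinear estimate (Lemma~\ref{leelemma}) controls $\Upsilon^{\text{far}}$ only in $L^2$, namely $\int|\Upsilon^{\text{far}}(h)|^2\lesssim\lambda^{\delta_0+\varepsilon}\|h\|_2^4$. Passing to $L^6$ costs $\|\psi\|_{L^\infty}^2$, which produces the term $\lambda^{\delta_0/6+\varepsilon}\|\psi\|_{L^\infty}^{1/3}$ in \eqref{2.44}; this is useless with $\|\psi\|_{L^\infty}\lesssim\lambda^{1/2}$. An improved sup-norm bound is therefore an indispensable input, and your plan never uses one. Indeed, the target exponent is $\frac{5}{36}=\frac13\cdot\frac{5}{12}$, which is exactly this term evaluated with Iwaniec--Sarnak's $\lambda^{5/12+\varepsilon}$.

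Even with that bound, balancing the loss $\lambda^{\delta_0/6}$ against the narrow term at a single scale only yields $\lambda^{\frac16-\frac1{108}}$. The paper removes the loss by the multiscale iteration of Theorem~\ref{thmb}. That iteration needs, at every coarser scale $\theta_1=\lambda^{-\delta_1}$, the microlocal sup-norm bound $\sup_\mu\|A^{\theta_1}_\mu\psi\|_{L^\infty}\lesssim\lambda^{\frac{5}{12}(1-\delta_1)+\varepsilon}$, and it closes because $\eta=\frac18>\delta_\infty=\frac1{12}$. This bound is a second arithmetic input absent from your proposal. It rests on the rapid decay of the microlocalized kernel when $\gamma$ moves the central geodesic (Proposition~\ref{p}(c)), together with a new count of Hecke elements that nearly fix a point and the geodesic simultaneously (Proposition~\ref{prop: simultaneous return}).

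Step~2 also uses the wrong benchmark. The trivial bound for $\|A_\nu\psi\|_{L^2}^2$ is $O(1)$, which Gaussian-beam quasimodes attain, not $\theta$. What is needed is a power saving at the working scale itself, $\sup_\nu\|A^{\theta_0}_\nu\psi\|_{L^2}\lesssim\lambda^{-\eta\delta_0+\varepsilon}$. The paper obtains $\eta=\frac18$ as follows. Hecke near-returns of the geodesic of norm $n$ within distance $\kappa\le Cn^{-1}$ commute and lie in a single quadratic field (Lemma~\ref{lem: Hecke return new}). This permits an amplifier of length $N\approx\lambda^{\delta_0/2}$ and gives $\|A^{\theta_0}_\nu\psi\|_{L^2}^2\lesssim\lambda^{-\delta_0/4+\varepsilon}$. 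That is what amplification actually delivers here, rather than the $\lambda^{\varepsilon}(\theta+\lambda^{-c})$ you assert. Your treatment of the diagonal pairs, with each piece contributing $\lambda^{1/6}\|A_\nu\psi\|_{L^2}$ via Sogge's bound, does match the paper's final step. With both arithmetic inputs in hand, Theorem~\ref{thma1} applied with $\delta_\infty=\frac1{12}$, $\eta=\frac18$, $\delta_0=\frac13$ gives the stated bound.
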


\subsection{Proof strategy}

A main step in the proof of Theorem~\ref{thma} is the following.
\begin{theorem}\label{thma1}
    Let $(M,g)$ be a compact $2$-dimensional Riemannian manifold without boundary. Let $e_\la$   be an $L^2$-normalized eigenfunction of  $-\Delta_g$ with eigenvalue $\la^2$, satisfying \eqref{1.2} for some $\la\gg1$. Set $\theta_0=\la^{-\delta_0}$ and let $A_\nu^{\theta_0}$ be the pseudo-differential operator defined in \eqref{qnusymbol}. Suppose that there exists $0<\delta_\infty<\eta<\frac{1}{2}$ so that, $\|e_\la\|_{L^\infty(M)}\lesssim_\e \la^{\frac{1}{2}-\delta_\infty+\e}$, and, for all $0<\delta_0<\frac12$,
    \begin{equation}\label{n3'}
        \sup_\nu\|A_\nu^{\theta_0} e_\la\|_{L^\infty(M)}\lesssim_\e \la^{(\frac{1}{2}-\delta_\infty)(1-\delta_0)+\e},
    \end{equation}
    and 
    \begin{equation}\label{n30}
        \sup_\nu\|A_\nu^{\theta_0} e_\la\|_{L^2(M)}\lesssim_\e \la^{-
        \eta\delta_0+\e},
    \end{equation}
 Then for any $\theta_0=\la^{-\delta_0}$ with $0<\delta_0\leq\frac{3}{8}(1-{\delta_\infty})$, we have 
 \begin{equation}\label{j5}
       \|e_\la\|_{L^6(M)}\lesssim_\e\la^{\frac{1}{6}-\frac{1}{3}\delta_\infty+\e}+  \la^{\frac{1}{6}-\frac{2}{3}\eta\delta_0+\e} .
    \end{equation}
\end{theorem}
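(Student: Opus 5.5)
The plan follows the Blair--Sogge bilinear strategy at the critical exponent $q_c=6$. First localize: fix a small coordinate patch and replace $e_\la$ by $\chi_\la e_\la$, where $\chi_\la=\rho(\la-\sqrt{-\Delta_g})$ is a reproducing operator. Then write $\chi_\la e_\la=\sum_\nu A_\nu^{\theta_0}\chi_\la e_\la$, with $A_\nu^{\theta_0}$ the operators of \eqref{qnusymbol} localizing the frequency direction to $\theta_0$-sectors $\nu$. This gives
\[
(\chi_\la e_\la)^2=\sum_{\nu,\nu'}A_\nu^{\theta_0}\chi_\la e_\la\, A_{\nu'}^{\theta_0}\chi_\la e_\la ,
\]
which I split into a diagonal part $\diag$, where $|\nu-\nu'|\lesssim\theta_0$, and a far part $\far$.

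\textbf{Far part.} For $\far$ I would use a Whitney-type decomposition of the angles. For each dyadic scale $\theta_0\le\theta\lesssim1$, pair $\theta$-sectors separated by $\approx\theta$ and apply the bilinear oscillatory integral estimates of Lee/H\"ormander type (as in Blair--Sogge). Each $\theta$-separated pair of products is bounded in $L^3$ by
\[
\theta^{-1/3}\la^{1/3}\,\|A_{\nu}^\theta e\|_{L^2}\|A_{\nu'}^\theta e\|_{L^2}.
\]
Almost-orthogonality in $L^3$ (quasi-orthogonality of the supports of the products, at the level of $L^{3/2}$ or $L^2$ interpolation) then sums these. The largest scale contributes trivially. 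The dangerous scales are $\theta\approx\theta_0$, where the factor $\theta_0^{-1/3}$ must be beaten by \eqref{n30}. Using $\sup_\nu\|A_\nu^{\theta}e\|_{L^2}\lesssim\la^{-\eta\delta+\e}$ at scale $\theta=\la^{-\delta}$, and hypothesis \eqref{n30} for every $\delta_0<\tfrac12$, I would interpolate:
\[
\|\cdot\|_{L^3}^{1/2}\lesssim \la^{1/6}\Big(\sum_\nu\|A_\nu^\theta e\|_{L^2}^2\Big)^{1/4}\sup_\nu\|A_\nu^\theta e\|_{L^2}^{1/2}\theta^{-1/6}.
\]
Replacing the bilinear $L^3$ by an $L^2\times L^\infty$ version where needed, the factor $\sup_\nu\|A_\nu^\theta e\|^{1/2}\lesssim\la^{-\eta\delta/2}$ together with an $L^6$ accounting should give the term $\la^{1/6-\frac23\eta\delta_0}$ after summing $O(\log\la)$ dyadic scales.

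\textbf{Diagonal part.} For $\diag$ the key facts are these. By Córdoba-type almost orthogonality in $L^3$ (products of sectors of width $\theta_0$ have essentially disjoint Fourier supports), $\|\diag\|_{L^3}\lesssim\big\|(\sum_\nu|A_\nu^{\theta_0}e|^2)\big\|_{L^3}$ up to $\la^\e$. Bound this by
\[
\Big(\sum_\nu\|A_\nu^{\theta_0}e\|_{L^6}^{6}\Big)^{1/3}\ \text{(roughly)},\qquad \|A_\nu^{\theta_0}e\|_6^6\le\|A_\nu^{\theta_0}e\|_\infty^4\|A_\nu^{\theta_0}e\|_2^2,
\]
with $\sum_\nu\|A_\nu^{\theta_0}e\|_2^2\lesssim1$. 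Then \eqref{n3'} gives $\|\diag\|_{L^3}^{1/2}\lesssim\la^{\frac23(\frac12-\delta_\infty)(1-\delta_0)+\e}$.

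\textbf{Main obstacle.} The hardest step is making the diagonal bound match \eqref{j5}. It must be at most $\la^{1/6-\delta_\infty/3}$, and this is where the restriction $\delta_0\le\frac38(1-\delta_\infty)$ enters. I expect instead to need a sharper treatment: use the sectorial Strichartz/local $L^6$ estimate $\|A_\nu^{\theta_0}e\|_{L^6}\lesssim(\la\theta_0)^{1/6}\|A_\nu e\|_{L^2}$-type bounds on $\la^{-1}\theta_0^{-1}$-scale tubes, combined with the global sup bound $\la^{1/2-\delta_\infty}$ via Hölder. I would optimize the exponent in the resulting bound, which is linear in $\delta_0$, and then read off the admissible $\delta_0$ range.
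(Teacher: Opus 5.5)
\textbf{Broad part.} There is a genuine gap here: your far-part accounting cannot work. Your per-pair bound $\theta^{-1/3}\la^{1/3}\|A^\theta_\nu e_\la\|_2\|A^\theta_{\nu'}e_\la\|_2$ contributes $\la^{1/6}\theta^{-1/6}$ to $\|e_\la\|_{L^6}$. So even the scale $\theta\sim1$ gives only the universal $\la^{1/6}$, with no gain. Your interpolated bound at $\theta=\la^{-\delta}$ is $\la^{\frac16+\delta(\frac16-\frac{\eta}{2})}$. It never yields $\la^{\frac16-\frac23\eta\delta_0}$, and it is worse than $\la^{1/6}$ whenever $\eta<\frac13$ (in the application $\eta=\frac18$).

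At the critical exponent, transversal interactions must be paid for with the sup norm. The paper uses $|\tilde\sigma_\la e_\la|^6\le2|\tilde\sigma_\la e_\la|^2(|\diag_A|^2+|\far_A|^2)$ together with the bilinear bound $\int|\far_A|^2\lesssim\la^{\delta_0+\e}$ (Lemma \ref{leelemma}). The broad term is then $\la^{\delta_0/6+\e}\|e_\la\|_\infty^{1/3}$, which is the first term of \eqref{j5} with a loss of $\la^{\delta_0/6}$. The same issue arises inside $\diag$. Your step $\|\diag\|_{L^3}\lesssim(\sum_\nu\|A^{\theta_0}_\nu e_\la\|_6^6)^{1/3}$ is false as stated, because it ignores the transversal cross terms $T_{\nu_1}T_{\nu_2}$ in $(\diag_A)^2$ (many sectors pass through a common point). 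The paper needs a second Whitney decomposition of $(\diag_A)^2$, again paired with bilinear estimates and the sup norm.

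Removing the $\la^{\delta_0/6}$ loss is the core of the argument, and it is absent from your plan. It is done by the induction on scales of Theorem \ref{thmb}: the decomposition is rerun inside each $\theta_1$-sector, $\theta_1=\la^{-\delta_1}\ge\theta_0$. The broad term then becomes $\la^{\delta_0/6+\e}\|A^{\theta_1}_\mu e_\la\|_\infty^{1/3}\|\tilde A^{\theta_1}_\mu e_\la\|_2^{2/3}$. Summing in $\ell^6$ over $\mu$ and using \eqref{n3'} and \eqref{n30} at scale $\theta_1$ gives the exponent $\frac{\delta_0}{6}-\frac{\eta\delta_1}{3}+\frac13(\frac12-\delta_\infty)(1-\delta_1)$. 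This is at most $\frac13(\frac12-\delta_\infty)$ once $\delta_0\le(1+2\eta-2\delta_\infty)\delta_1$. Since $\eta>\delta_\infty$, the admissible $\delta_0$ grows geometrically from $\e_0$ and reaches $\frac38(1-\delta_\infty)$ in finitely many steps. This is exactly where \eqref{n3'} and the hypothesis $\eta>\delta_\infty$ are used.

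\textbf{Narrow part.} Applying \eqref{n3'} to the diagonal gives $\la^{\frac23(\frac12-\delta_\infty)(1-\delta_0)}\approx\la^{1/3}$, as you noticed. Your proposed repair is not available. The bound $\|A^{\theta_0}_\nu e_\la\|_{L^6}\lesssim(\la\theta_0)^{1/6}\|A^{\theta_0}_\nu e_\la\|_{L^2}$ fails in general: a Gaussian beam (e.g.\ a highest-weight spherical harmonic) concentrated in a $\la^{-1/2}$-tube about $\gamma_\nu$ lies in a single $\theta_0$-sector when $\delta_0<\frac12$, yet has $L^6$ norm $\sim\la^{1/6}$. Nor does that bound follow from \eqref{n3'}--\eqref{n30}.

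What works is the plain Sogge bound on each sector. After commuting $\sigma_\la$ past $A^{\theta_0}_\nu$ (error $O(\la^{-\frac13+\frac43\delta_0})$ from $L^2$ to $L^6$ by \eqref{commute}), one has
\[
\Bigl(\sum_\nu\|B\sigma_\la A^{\theta_0}_\nu e_\la\|_{L^6}^6\Bigr)^{1/6}\lesssim\la^{1/6}\Bigl(\sup_\nu\|A^{\theta_0}_\nu e_\la\|_{L^2}\Bigr)^{2/3}\Bigl(\sum_\nu\|A^{\theta_0}_\nu e_\la\|_{L^2}^2\Bigr)^{1/6}\lesssim\la^{\frac16-\frac23\eta\delta_0+\e}
\]
by \eqref{n30} and \eqref{2.33}.

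So the roles are the reverse of your plan. The broad part carries the sup-norm hypotheses and gives the first term of \eqref{j5}; the narrow part carries the $L^2$ hypothesis and gives the second. Finally, $\delta_0\le\frac38(1-\delta_\infty)$ is not the output of optimizing the diagonal. It is exactly the condition $-\frac16+\frac89\delta_0\le\frac16-\frac13\delta_\infty$ that makes these commutator errors negligible.
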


For each fixed $\nu$, the operator $A_\nu^{\theta_0}$ is microlocally supported in a $\theta_0$-neighborhood of a geodesic segment and is frequency-localized to directions near that geodesic; the precise definition will be given in Section~\ref{sec: microlocal decomposition}. We shall refer to estimates for $\sup_\nu \| A_\nu^{\theta_0}e_\la\|_{L^q(M)}$ ($2\le q\le\infty$) as \textit{microlocal Kakeya--Nikodym estimates} for $e_\lambda$, as it incorporates additional frequency localization compared to the Kakeya--Nikodym norm appearing in \eqref{r1a}. Moreover, the width of the associated geodesic neighborhood is $\lambda^{-\delta_0}$, which may be larger than the $\lambda^{-1/2}$-tubes appearing in \eqref{r1a}. 

The proof of Theorem~\ref{thma1} occupies Section \ref{sec: 2}. It starts with a strategy similar to that in \cite{huang2024curvature}, combined with a multi-scale iteration argument that exploits both \eqref{n3'} and \eqref{n30}. This iteration scheme is similar in spirit to the induction-on-scales approach in harmonic analysis.
The main idea is to decompose $e_\lambda$ in phase space as a sum of microlocally localized pieces $A_\nu^{\theta_0} e_\lambda$, and to expand $e_\lambda^2$ as a bilinear sum involving products of the form $A_\nu^{\theta_0} e_\lambda \, A_{\nu'}^{\theta_0} e_\lambda$. The \emph{broad} term—namely, pairs that are well separated in phase space—is handled using bilinear oscillatory integral estimates together with sup-norm bounds; these account for the first term on the right-hand side of \eqref{j5}.
The \emph{narrow} term, corresponding to nearby pairs $A_\nu^{\theta_0}$ and $A_{\nu'}^{\theta_0}$, satisfies an almost orthogonality property in $\nu$. This is treated via an iteration argument. At each step of the iteration, the broad term is again estimated using bilinear bounds together with the refined sup-norm estimate \eqref{n3'} and the refined $L^2$ bound \eqref{n30}, yielding improvements over the first term in \eqref{j5}.
After finitely many iterations, we terminate the procedure and apply a different argument to handle the remaining narrow contributions, which give rise to the second term on the right-hand side of \eqref{j5}.

The assumption on $\delta_0$, which depends on $\delta_\infty$, is chosen for convenience to neglect certain error terms arising from commutator estimates between the operator \(A_\nu^{\theta_0}\) and the spectral projection operators used in the analysis. Although this assumption could likely be improved with a more refined argument, the present range is sufficient for our applications.

The $L^6$-norm does not play an essential role in the proof of Theorem~\ref{thma1}. In fact, using the same ideas as above along with an iteration argument, one can show that
\begin{equation*}
    \|e_\la\|_{L^q(M)} \lesssim_\e \la^{(1-\frac{4}{q})(\frac{1}{2}-\delta_\infty)+\e} + \la^{\frac{1}{4}-\frac{1}{2q}-\frac{q-2}{q}\eta\delta_0+\e},
\end{equation*}
for all $q \in [q_0,6]$, where $q_0 \approx 5.8$ is a fixed exponent. This would yield a corresponding extension of Theorem~\ref{thma}, namely that
\[
\| \psi \|_{L^q(X)} \lesssim_\epsilon \la^{(1-\frac{4}{q})\frac5{12}+\e}, \quad\forall q_0\le q\leq\infty.
\]
For simplicity, we do not pursue this extension here.

To apply Theorem \ref{thma1}, we need the following improved microlocalized Kakeya-Nikodym estimates for $\psi$, which rely on the arithmetic assumptions on $X$ and $\psi$.

\begin{theorem}\label{thm: improved KN norm}
    Let $\psi$ be an $L^2$-normalized Hecke–Maass form on $X$ with spectral parameter $\lambda\gg1$. Set $\theta_0=\la^{-\delta_0}$ with $0<\delta_0<\frac12$, and let $A_\nu^{\theta_0}$ be the pseudo-differential operator defined in  \eqref{qnusymbol}. We have
    \begin{equation}\label{i3}
        \sup_\nu\|A_\nu^{\theta_0} \psi\|_{L^2(X)}\lesssim_{\delta_0,\e }\la^{-\frac{\delta_0}{8}+\e},
    \end{equation}
    and
    \begin{equation}\label{eq: improved KN sup}
        \sup_\nu\|A_\nu^{\theta_0} \psi\|_{L^\infty(X)}\lesssim_{\delta_0,\e} \la^{\frac{5}{12}(1-\delta_0)+\e}. 
    \end{equation}
\end{theorem}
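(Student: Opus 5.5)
Both bounds will come from the amplification method of Iwaniec--Sarnak, applied to a pretrace formula in which the usual spectral projector is replaced by a microlocalized one. Fix $\nu$ and write $A=A_\nu^{\theta_0}$. For the $L^2$ bound \eqref{i3}, write $\|A\psi\|_2^2=\langle A^*A\psi,\psi\rangle$. Since $\psi$ is an eigenfunction, we may insert a spectral cutoff $\chi(\sqrt{-\Delta-1/4}-\la)$ with $\hat\chi$ compactly supported, so that $\chi(\cdots)\psi=\psi$. We then consider the operator $\chi(\cdots)^*A^*A\,\chi(\cdots)$, whose kernel is, up to $O(\la^{-N})$, a $\Gamma$-periodization
\[
K(x,y)=\sum_{\gamma\in\Gamma}k(x,\gamma y),
\]
where $k$ is the kernel on $\BH$ of the operator localized to a $\theta_0$-tube $T$ around a unit geodesic segment, with frequencies within angle $\theta_0$ of the tangent direction. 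This operator is positive, so it dominates its diagonal contribution. We then apply the Hecke amplifier $\mathcal{T}=\sum_{\ell\le L}\alpha_\ell T_\ell$ (using $T_p$ and $T_{p^2}$, with $\alpha_\ell$ chosen so that $|\sum\alpha_\ell\lambda_\psi(\ell)|\gg L^{1/2-\e}$). Positivity gives
\[
L^{1-\e}\|A\psi\|_2^2\lesssim \sum_{m}\Big|\sum_{\ell_1\ell_2=m}\alpha_{\ell_1}\bar\alpha_{\ell_2}\Big|\,m^{-1/2}\sum_{\gamma\in R(m)}\iint |k(x,\gamma y)|\,dx\,dy+O(\la^{-N}),
\]
where $R(m)$ denotes the elements of the order with reduced norm $m$, after the usual lifting to $\BH$.

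The key analytic input is a bound on $k(x,gy)$ for $g\in \SL(2,\BR)$. We have $|k|\lesssim \la\theta_0$ on the tube, since the microlocal piece captures only a $\theta_0$-fraction of the frequencies, which gives the trivial bound $\|A\psi\|_2^2\lesssim\theta_0$ type of size. More importantly, $k(x,gy)$ is negligible unless $g$ maps the tube $T$ (with its direction) to within $\theta_0$ of itself. That is, $g$ lies in a $\theta_0$-neighbourhood of the one-parameter diagonal subgroup $A$ stabilizing the geodesic, or more precisely in a $(\theta_0,\la^{-1/2})$-type neighbourhood determined by the microlocal uncertainty, $\la^{-1/2}\le\theta_0$. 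The identity term then contributes $L\cdot\|k\|$. For $\gamma\ne 1$, I will count the $\gamma\in R(m)$, $m\le L^2$, lying within $\theta_0$ of $A$ (times a compact set). This is Marshall's lattice-point count from \cite{Mar16}: elements of the quaternion order close to a fixed split torus. For $\gamma$ not in the torus itself, the count is $\lesssim_\e L^{\e}(1+ L^{2}\theta_0)$-ish, with a separate treatment of the torus elements. In the compact quaternion setting, a given torus contains elements of $R(m)$ only if it is rational, and even then only $\ll m^\e$ of them. The diagonal of the periodized kernel gives a saving of $\theta_0^{\cdot}$, and balancing $L$ against $\theta_0$ (something like $L=\theta_0^{-1/2}$) gives $\|A\psi\|_2^2\lesssim\theta_0^{1/4}\la^\e$, which is \eqref{i3}. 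I expect this counting step, namely the uniformity in how close to a torus an element of $R(m)$ can be when the torus is arbitrary, to be the main obstacle. I would handle it as Marshall does, by showing that two nearby elements generate a commutator that forces rationality, or via a Diophantine argument on the trace and off-diagonal entries.

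For the sup bound \eqref{eq: improved KN sup}, I would run the Iwaniec--Sarnak argument for the kernel $\tilde k(x,y)$ of $A\chi\chi^*A^*$ at $x=y=z$. Pointwise, $|\tilde k(z,gz)|\lesssim \la\theta_0\,(1+\la d(z,gz))^{-1/2}$. The extra factor $\theta_0$ over the usual kernel comes from the angular localization, and $\tilde k$ is moreover supported where $gz$ lies near the geodesic through $z$ in the chosen direction, which is within angle $\theta_0$. The amplified inequality then reads
\[
L|A\psi(z)|^2\lesssim \la^\e\sum_{m\le L^2}\frac{|y_m|}{\sqrt m}\sum_{\gamma\in R(m)}\la\theta_0\,\bigl(1+\la d(z,\gamma z)\bigr)^{-1/2}.
\]
Inserting the Iwaniec--Sarnak counts for $\#\{\gamma\in R(m): d(z,\gamma z)\le \delta\}$ gives the usual bound scaled by $\theta_0$ in the diagonal term, with the off-diagonal terms no worse than in the unlocalized case. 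With $L=\la^{1/6}$-type optimisation, rescaled as $L=(\la\theta_0)^{1/6}$, this yields $\|A\psi\|_\infty^2\lesssim(\la\theta_0)^{5/6+\e}$, that is, \eqref{eq: improved KN sup}. The point is that the effective frequency scale becomes $\la\theta_0=\la^{1-\delta_0}$, and I would verify that the Iwaniec--Sarnak counting lemmas apply at this scale with the directional support only helping.
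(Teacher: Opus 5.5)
\textbf{The main gap is in the sup-norm half: the kernel bound you use there is false, so you miss the new counting input the argument needs.}

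Your estimate $|\tilde k(z,gz)|\lesssim \la\theta_0(1+\la d(z,gz))^{-1/2}$ does not hold. The angular cutoff caps the kernel at $\la\theta_0$ only when $d\lesssim \la^{-1}\theta_0^{-2}=\la^{-1+2\delta_0}$. For larger $d$, with $gz$ on the central geodesic, the angular integral $\int_{|\phi|\lesssim\theta_0}e^{i\la d\cos\phi}\,a(\phi/\theta_0)\,d\phi$ has a nondegenerate stationary point at $\phi=0$. It is of size $(\la d)^{-1/2}$ independently of $\theta_0$, so the kernel has full size $\la^{1/2}d^{-1/2}$. This is Proposition~\ref{p}(a),(b), and for $g$ translating along the central geodesic it cannot be improved. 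So there is no ``effective frequency $\la\theta_0$'' at the level of the kernel.

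With the correct kernel and the standard Iwaniec--Sarnak count $\#\{\gamma\in R(n): d(z,\gamma z)\le\delta\}\ll n^{\e}(n\delta^{1/2}+1)$, you get $N^{1-\e}|A\psi|^2\lesssim N^2\la^{1/2}+N^{1/2}\la^{1-\delta_0}$. This is exactly your ``off-diagonal terms no worse than in the unlocalized case''. Optimizing gives only $\|A\psi\|_\infty\lesssim \la^{\frac{5}{12}-\frac{\delta_0}{3}+\e}$, short of $\la^{\frac{5}{12}(1-\delta_0)}$, so no amplifier length reaches $(\la\theta_0)^{5/6}$.

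The saving has to come from arithmetic. By Proposition~\ref{p}(c), which uses Marshall's oscillatory integral (Lemma~\ref{lem: Marshall rapid decay}), only $\gamma$ with $d(g_0^{-1}\gamma g_0,A)\lesssim\la^{-\delta_0+\e}$ contribute. You then need a bound for \emph{simultaneous} point and geodesic returns, $\#\{\gamma\in R(n): d_g(z,\gamma z)\le\delta,\ d(g^{-1}\gamma g,A)\le\kappa\}\ll(n/\kappa\delta)^{\e}(n\sqrt{\kappa\delta}+1)$, which is Proposition~\ref{prop: simultaneous return}. It is proved via a modified Diophantine lemma with the extra constraint $s^2\lesssim n\kappa$ (or $s^2\lesssim n\delta$). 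The gained factor $\sqrt{\kappa}=\la^{-\delta_0/2}$ turns the off-diagonal term into $N^2\la^{\frac{1-\delta_0}{2}}$, and $N=\la^{\frac{1-\delta_0}{3}}$ then gives \eqref{eq: improved KN sup}. Saying the directional support ``only helps'' does not replace this: the Iwaniec--Sarnak lemma does not see the torus condition, and the joint count is the main new ingredient.

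\textbf{The $L^2$ half has the right counting but an unjustified amplified inequality.} Your count matches the paper: for $m\lesssim\theta_0^{-1}$ the near-torus elements commute, lie in a quadratic field, and number $\ll m^{\e}$. The final balance $L=\theta_0^{-1/2}$ also matches. However:
\begin{itemize}
\item There is no trivial bound $\|A\psi\|_2^2\lesssim\theta_0$. The local bound is $O(1)$, attained by Gaussian beams on the sphere.
\item $\iint|k(x,\gamma y)|\,dx\,dy$ does not control $\langle A^*A\psi,\psi\rangle$ without pointwise control of $\psi$.
\item Using positivity through a trace instead would give an identity term of size about $\la\theta_0^2$, which is useless.
\end{itemize}
The correct set-up is by duality. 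Bound $\langle A\psi,\phi\rangle$ for $\|\phi\|_2=1$, integrate the amplified pretrace formula against $\overline{A^*\phi}\otimes A^*\phi$, and bound each $\gamma$-term by $O(1)$ using Cauchy--Schwarz and $L^2$-boundedness. The identity then contributes $\sum|\alpha_n|^2$, and you get $\|A\psi\|_2^2\lesssim N^{-1/2+\e}$ with $N^2\lesssim\la^{\delta_0}$.

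Finally, your claim that the kernel is negligible away from the torus needs justification. The paper gets it by reducing $A_\nu^{\theta_0}$ to the operator $\sA$ with the extra radial localization $|\xi_2-\la|\lesssim\la^{1-2\delta_0}$ (Lemma~\ref{lem: reduction of pdo}). Marshall's bound then applies with $\beta\sim\la^{1-2\delta_0}$.
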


We will prove Theorem \ref{thm: improved KN norm} in Section \ref{sec 3} by the method of arithmetic amplification from \cite{IS95}. 
Similar to usual amplification arguments, we have counting problems and analytic problems. The counting problem (see Lemma \ref{lem: Hecke return new}) for \eqref{i3} is to estimate the number of times a Hecke operator maps $\gamma$ back close to itself. Using the microlocal support of $A_\nu^{\theta_0}$, we can reduce the analytic problem to an oscillatory integral estimate appearing in \cite{Mar16}, which will be proved in Proposition \ref{prop: bound for I_A}.

Using the operator bound $\sup_\nu\| A_\nu^{\theta_0}\|_{L^\infty\to L^\infty}\lesssim1$, it follows from Iwaniec-Sarnak's bound \eqref{eq: IS bound} that $\sup_\nu\|A_\nu^{\theta_0} \psi\|_{L^\infty(X)}\lesssim_\e \la^{\frac{5}{12}+\e}$.
We expect to control the sup-norm of $A_\nu^{\theta_0} \psi$ better because it behaves like a Gaussian beam along a geodesic, which usually has a smaller sup-norm. We believe the local sup-norm bound for $A_\nu^{\theta_0} \psi$ is $\la^{\frac{1-\delta_0}{2}}$ by Proposition \ref{p}. Because of the microlocalization, the kernel function in the amplified pretrace formula has an extra rapid decay feature, compared to the standard kernel estimate in the sup-norm problem, when the group translation separates the central geodesic in the support of $A_\nu^{\theta_0}$. We establish this kernel estimate in Proposition \ref{p}. Hence we have a new counting problem by studying the simultaneous point and geodesic returns. This extra constraint allows us to prove a power improvement over Iwaniec-Sarnak's counting bound. Our new counting bound is Proposition \ref{prop: simultaneous return} and will be proved in Section \ref{sec: counting}.

\begin{proof}[Proof of Theorem \ref{thma}]
    We prove this by assuming Theorems \ref{thma1} and \ref{thm: improved KN norm}. In particular, by combining \eqref{j5} with \eqref{i3} and \eqref{eq: improved KN sup} and choosing $\delta_\infty=\frac{1}{12}$ and $\eta=\frac{1}{8}$, we get
    \[
    \|\psi\|_{L^6(X)}\lesssim_\e\la^{\frac{1}{6}-\frac{1}{36}+\e}+  \la^{\frac{1}{6}-\frac{\delta_0}{12}+\e}.
    \]
    We complete the proof by choosing $\delta_0=\frac{1}{3}$.
    Note that the requirement $0<\delta_0<\frac{3}{8}(1-\delta_\infty)=\frac{11}{32}$ is satisfied.
\end{proof}

    In Theorem \ref{thma}, the arithmetic assumption that $\psi$ is a Hecke eigenfunction is only used to invoke the bound Iwaniec-Sarnak \cite{IS95} and to prove Theorem \ref{thm: improved KN norm}, both of which only rely on arithmetic amplification. Hence, the maximal order in the theorem can be replaced with an Eichler order as in \cite{IS95}. Moreover, we use only unramified Hecke operators, so we can assume $\psi$ to be an eigenfunction under even fewer Hecke operators as long as the amplification argument can be adapted.
    
    The strategy developed here can also be generalized to study the $L^{q_c}$-norm problems for Hecke-Maass forms on compact locally symmetric spaces of rank one, as long as the corresponding sup-norm problem and microlocal Kakeya-Nikodym problem can be solved by the arithmetic amplification technique; we will explore this direction in future work. Theorem \ref{thma1} is expected to be generalized to Laplace-Beltrami eigenfunctions on compact Riemannian manifolds of any dimension. For example, when the manifold is taken to be a compact arithmetic hyperbolic 3-manifold (the critical exponent is $q_c=4$), the corresponding sup-norm problem for Hecke-Maass forms is expected to be solved (see e.g. \cite{BHM16} for noncompact cases). The corresponding microlocal Kakeya-Nikodym estimates should be established in a similar way as Theorem \ref{thm: improved KN norm}.

\subsection{Notation}
Throughout the paper, the notation $A\lesssim B$ means that there is a positive constant $C$ such that $|A| \leq C B$, and $A\sim B$ means that $A\lesssim B\lesssim A$. We also use $A =O(B)$ to mean $A\lesssim B$. The notation  $A\gg B$ means there is a sufficiently large positive constant $C$ such that $A\ge CB$, and similarly for $A\ll B$.

If $f\in L^1(\BR^n)$, the Fourier transform $\hat{f}$ of $f$ in this paper is defined as
\begin{align*}
    \hat{f}(\xi)=\int_{\BR^n} f(x)e^{-i\langle x,\xi\rangle} dx. 
\end{align*}
Here $x=(x_i),\xi=(\xi_i)\in\BR^n$ and $\langle x,\xi\rangle=\sum_i x_i\xi_i$.
If $f\in\cS(\BR^n)$ is a Schwartz function, the Fourier inversion formula reads
\[
    f(x) = (2\pi)^{-n}\int_{\BR^n}\hat{f}(\xi)e^{i\langle x,\xi\rangle} d\xi.
\]
We say a symbol $p(x,\xi)$ is in the class $S^m_{\rho,\delta}$ if\[
|\partial_x^\beta\partial^\alpha_\xi p(x,\xi)|\lesssim_{\alpha,\beta} \langle\xi\rangle^{m-\rho|\alpha|+\delta|\beta|},
\]
where $\langle\xi\rangle=(1+|\xi|^2)^{1/2}$ is the Japanese bracket.
The pseudo-differential operator $p(x,D)$ is defined by the integral
\[
p(x,D)f(x)=(2\pi)^{-n}\int p(x,\xi)\hat{f}(\xi) e^{i\langle x,\xi\rangle}d\xi=(2\pi)^{-n}\iint p(x,\xi)f(y) e^{i\langle x-y,\xi\rangle}d\xi dy.
\]
We say $P$ is the pseudo-differential operator with the compound symbol $p(x,y,\xi)$ if
\[Pf(x)=(2\pi)^{-n}\iint p(x,y,\xi)f(y) e^{i\langle x-y,\xi\rangle}d\xi dy.\]

\subsection{Acknowledgements}
The authors would like to thank Farrell Brumley, Simon Marshall, Lior Silberman, Christopher D. Sogge, and Radu Toma for helpful conversations or comments.  The second author was supported in part by the Simons Foundation and NSF (DMS-2452860). 
\bigskip

\section{Reduction to microlocal Kakeya-Nikodym estimates}\label{sec: 2}
In this section, we prove Theorem \ref{thma1}. The constructions used here are similar to those in Section 2.2 of \cite{huang2024curvature}. Let $(M,g)$ and $e_\la$ be as in the theorem.
In order to use the local harmonic analysis tools related to oscillatory integrals, it is convenient to use the 
smooth spectral projection operators
of the form
\begin{equation}\label{2.1}
\sigma_\la =\rho(\la-P),  \quad P=\sqrt{-\Delta_g},
\end{equation}
where 
\begin{equation}\label{2.2}
\rho\in {\mathcal S}(\R), \, \, \rho(0)=1 \, \, \,
\text{and } \, \, \text{supp }\Hat \rho\subset \delta\cdot [1-\tilde \delta,1+\tilde\delta]
=[\delta-\tilde\delta\delta, \delta+\tilde \delta\delta].
\end{equation}
Here $\tilde \delta,\delta$ are some small positive constants. Note that the condition $\rho(0)=1$ implies $\sigma_\la e_\la=\rho(0)e_\la=e_\la$. 
Moreover, by \cite[Lemma 5.1.3]{SFIOII},
\begin{equation}\label{2.2a}
\begin{aligned}
        \sigma_\la f(x)&=(2\pi)^{-1}\int^{2\delta}_{-2\delta} \Hat \rho(t) e^{it\la} e^{-itP}f\, dt
     \\ &=
 \lambda^{1/2}\int_Me^{i\lambda d_g(x,y)}a(x,y,\lambda) f(y)dy +R_\lambda f (x)
\end{aligned}
 \end{equation}
 where 
 $$ \|R_\lambda f\|_{L^\infty(M)}\le C_N\lambda^{-N} \|f\|_{L^1(M)}\quad\forall N=1,2,3..
 $$
and $a\in C^\infty$ has the property that 
\begin{align}\label{eq: derivative bound for a}
\begin{split}
    &a(x,y,\la)=0 \qquad\text{ if }\,\, d_g(x,y)\notin (\delta/2, 2\delta),\\
    \text{ and }\;&\partial_{x,y}^\gamma a(x,y,\la)\lesssim_\gamma 1\qquad\text{for any multi-index } \gamma.
\end{split}
\end{align}
Here we shall need that $\delta$ is smaller than the injectivity radius of $M$ and it also must be
chosen small enough so that the phase function $d_g(x,y)$ satisfies the Carleson-Sj\"olin condition when $d_g(x,y)\in (\delta/2, 2\delta)$,
with $d_g(\, \cdot \, ,\, \cdot \, )$ denoting the Riemannian distance function. The additional constant $\tilde\delta$ is required to be sufficiently small to guarantee that the bilinear oscillatory integral estimates used in \cite{huang2024curvature} are valid.

Let us further localize using microlocal cutoffs. 
Let $\{\phi_i(x,\xi)\}_{j=1}^{N_0}$ be a finite partition of unity of $S^*M$, where each $\phi_i$ is supported in a sufficiently small neighborhood of some $(x_i,\xi_i)\in S^*M$. Here $N_0$ is a large fixed constant independent of $\la$. Fix a function
$$\beta\in C^\infty_0((1/2,2)),   \, \, \, \text{and}\,\,\,
\beta(\tau)=1 \, \, \, \text{for } \, \tau \, \, \text{near } \, \, 1, \, \,$$
Define \begin{equation}\label{2.8}
  B_{i,\la}(x,\xi)=  \phi_i(x,\xi/p(x,\xi))\beta(p(x,\xi)/\la)
\end{equation}
where $p(x,\xi)=|\xi|_g=\sqrt{g^{ij}(x)\xi_i\xi_j}$ denotes the principal symbol of $P=\sqrt{-\Delta_g}$. We may assume the $x$-support of each symbol $\phi_i(x,\xi)$ is contained in a local coordinate chart $\Omega_i$, and let 
  $B_{i,\la}(x,D)$ be the pseudo-differential operator defined in this chart with symbol $B_{i,\la}(x,\xi)$ (see e.g., Chapter 3.3 in \cite{SFIOII} for the definition of pseudo-differential operators on compact manifolds). Let ${B}_{i,\la}(x,y)$ denote the Schwartz kernel of  $B_{i,\la}(x,D)$. We may further assume (after inserting a cutoff in the $y$ variable) that
\[
B_{i,\lambda}(x,y)=B_{i,\lambda}(x,y)\,\tilde\phi_i(y),
\]
where $\tilde\phi_i\in C_0^\infty(\Omega_i)$ satisfies $\tilde\phi_i\equiv1$
in a neighborhood of the $x$–support of $\phi_i(x,\xi)$.
Then, 
by repeated integration by parts, we have
$$    {B}_{i,\la}(x,y)=
O_N\bigl(\la^{2} (1+\la|x-y|)^{-N}\bigr) \qquad\text{ for any }N>0.
$$
Consequently, by Young’s inequality,
 $B_{i,\la}(x,D)$ 
are uniformly bounded on $L^p(M)$, i.e.,
\begin{equation}\label{2.9}
\|B_{i,\la}(x,D)\|_{L^p\to L^p}=O(1) \quad \text{for } \, \, 1\le p\le \infty.
\end{equation}
Since $p(x,\nabla_xd_g(x,y))=1$ (i.e. the eikonal equation for the distance function), by \eqref{2.2a} and an integration by parts argument, we have
\begin{equation}\label{2.12}
\, \, \bigl\|\sigma_\la -\sum_{i=1}^{N_0}B_{i,\la}(x,D)\circ \sigma_\la\bigr\|_{L^2\to L^q}=O_N(\la^{-N}), \qquad \text{ for any }N>0 \,\,\,\text{and}\,\,\,q\ge 2.
\end{equation}

Let $B=B_{i,\la}$ for some fixed $i$. By \eqref{2.2}, we have $\sigma_\la e_\la=e_\la$. Consequently, in view of \eqref{2.12}, in order to prove Theorem~\ref{thmb},  it suffices to show the corresponding bounds for $\|B e_\la\|_{L^6(M)}$ with $\la\gg1$.

\subsection{Microlocal decomposition}\label{sec: microlocal decomposition}
We introduce a further microlocal decomposition, which
involves localizing in $\theta>\la^{-1/2}$ neighborhoods of geodesics in a fixed coordinate chart,
as in the recent work \cite{huang2024curvature}. This decomposition enables us to apply the bilinear estimate established in \cite{huang2024curvature}, whose proof builds on earlier bilinear oscillatory integral estimates developed in \cite{LeeBilinear} and \cite{TaoVargasVega}. Below we describe the  details of the decomposition used in \cite{huang2024curvature} in  dimension 2. 

First recall that the symbol $B(x,\xi)$ of $B$ in \eqref{2.8} is supported in a small
conic neighborhood of some $(x_0,\xi_0)\in S^*M$ with $p(x,\xi)\in (\la/2,2\la)$.  We may assume that its symbol has
small enough support so that we may work in a coordinate chart $\Omega$ for $M$, and that
\begin{align}\label{eq: assumption on local chart}
    x_0=0,\quad\xi_0=(0,1),\quad g_{jk}(0)=\delta^j_k
\end{align}
in the local coordinates. So we shall assume that $B(x,\xi)=0$ when $x$ is outside a small relatively compact neighborhood of the origin or $\xi$ is outside of a small conic neighborhood of $(0,1)$.

Next, let us define the microlocal cutoffs that we shall use.   We fix a function
$b\in C^\infty_0({\mathbb R}^{2})$ supported in $\{z=(z_1,z_2): \, |z_k|\le 1, \, \, 1\le k\le 2\}$
 which satisfies
\begin{equation}\label{m1}
\sum_{j\in {\mathbb Z}^{2}}b(z-j)\equiv 1.
\end{equation}
We shall use this function to build our microlocal cutoffs.
By the above, we shall focus on defining them 
 for $(y,\eta)\in S^*\Omega$ with    $y$ near the origin
 and  $\eta$ in a small conic neighborhood of $(0,1)$. 
We shall let
$$\Pi=\{y=(y_1,y_2): \, y_{2}=0\}$$
be the points in $\Omega$ whose last coordinate vanishes.

To construct the cutoffs associated to the $\theta$-net of geodesics in $S^*M$ that we require, let us first
set
\begin{equation}\label{map0}
b^\theta_j(y_1,\eta_1)=b(\theta^{-1}(y_1,\eta_1)-j)\in C^\infty_0({\mathbb R}^{2}),
\end{equation}
so that $\sum_{j\in {\mathbb Z}^{2}} b^\theta_j(y_1,\eta_1)=1$.  
Note that if $\Phi_t$ denote the geodesic flow on $S^*\Omega$,  the map
\begin{equation}\label{map1}
(t,x_1,\eta)\mapsto \Phi_t((x_1,0),\eta)\in S^*\Omega, \qquad ((x_1,0),\eta)\in S^*\Omega
\end{equation}
is a diffeomorphism from a neighborhood of $x_1=0$, $t=0$ and $(0,1)\in S^1$ to a neighborhood of $(0, (0,1))\in S^*\Omega$.

Next, write the inverse of \eqref{map1} as
$$S^*\Omega \ni (x,\omega)\to (\tau(x,\omega),\Psi(x,\omega), \Theta(x,\omega))\in \R \times \{y_1\in \R\}\times S^*_{(\Psi(x,\omega),0)}M.$$
Thus, the unit speed geodesic passing through $(x,\omega)\in S^*\Omega$ arrives at the hyperplane  $\Pi$ where $y_2=0$ at $(\Psi(x,\omega),0)\in \Pi$,
has covector $\Theta(x,\omega)\in S^*_{(\Psi(x,\omega),0)}\Omega$ there, and $\tau(x,\omega)=d_g(x,(\Psi(x,\omega),0))$ is the geodesic distance
between $x$ and the point $(\Psi(x,\omega),0)$ on this hyperplane.  We shall also let $ \Theta_1(x,\omega)$ denote the first coordinate
of the covector $\Theta(x,\omega)$, meaning that $\Theta(x,\omega)_1=\eta_1$ if $\Theta(x,\omega)=(\eta_1,\eta_2)\in S^*_{(\Psi(x,\omega),0)}\Omega$.

We can now define the microlocal cutoffs that we shall use.  For $(x,\xi)\in T^*\Omega\backslash0$ in a conic neighborhood of $(0,(0, 1))$, if
$b^\theta_j$ is as in \eqref{map0}, we define
\begin{equation}\label{b.45}
a^\theta_j(x,\xi)=b^\theta_j(\Psi(x,\xi/p(x,\xi)),  \Theta_1(x,\xi/p(x,\xi))),
\end{equation}
with $p(x,\xi)=\sqrt{g^{ij}(x)\xi_i\xi_j}$ being the principal symbol of $P=\sqrt{-\Delta_g}$.

Note that for $s$ near $0$ and $(x,\xi)\in S^*\Omega$ near $(0,(0,1))$
\begin{equation}\label{b.46}
a^\theta_j(\Phi_s(x,\xi))=a^\theta_j(x,\xi).
\end{equation}
Furthermore, 
 if $(y_1,\eta_1)=\theta j=\nu$ and $\gamma_\nu$ is the geodesic
in $S^*\Omega$ passing through $(y_1,0,\eta)\in S^*\Omega$ with 
$\eta\in S^*_{(y_1,0)}\Omega$ having $\eta_1$ as its first  coordinate
and $\eta_2>0$ then
\begin{equation}\label{b.48}
a^\theta_j(x,\xi)=0 \, \, 
\text{if } \, \text{dist}((x,\xi/p(x,\xi)),\gamma_\nu)\ge C_0\theta, \, \, \nu=\theta j
\end{equation}
for a uniform positive constant $C_0$. Also, $a_j^\theta$ satisfies the estimates
\begin{equation}\label{b.49}
|\partial_x^\sigma \partial_\xi^\gamma a^\theta_j(x,\xi)|\lesssim \theta^{-|\sigma|-|\gamma|},
\quad \text{if } \, \, p(x,\xi)=1,
\end{equation}
related to this support property.

Finally, if $\phi \in C^\infty_0(\Omega)$ equals one in a neighborhood of the $x$-support of $B(x,\xi)$, and if $\tilde \beta\in C^\infty_0((1/4,4))$ equals one in $(1/3,3)$, we define
\begin{equation}\label{qnusymbol}
A_\nu^\theta(x,\xi)=\phi(x) \, a_j^\theta(x,\xi) \, \tilde\beta\bigl(p(x,\xi)/\la\bigr),
\quad \nu =\theta j\in \theta \cdot {\mathbb Z}^{2}.
\end{equation}
By \eqref{map0} and \eqref{b.45}, it is not hard to check that the symbol $A^\theta_\nu(x,\xi)$ satisfies 
\begin{equation}\label{b.45a}
\bigl|\partial_x^\sigma \partial_\xi^\gamma A^\theta_\nu(x,\xi)\bigr| \lesssim \langle \xi \rangle^{\delta_0|\sigma|-(1-\delta_0)|\gamma|},
\end{equation}
 if $\theta\in [\la^{-\delta_0},1]$  for some $0<\delta_0<\frac{1}{2}$.
Hence the pseudo-differential operators
$A_\nu^\theta(x,D)$ with these symbols belong to a bounded subset
of
of $S^0_{1-\delta_0, \delta_0}(M)$. 

We shall need a few simple but very useful facts about these operators:
\begin{lemma}\label{alemma}  Let $\theta_0=\la^{-\delta_0}$ for some $0<\delta_0<\frac12$.  Then for any $h\in L^q(M)$
\begin{align}\label{2.33}
(\sum_\nu\|A^{\theta_0}_\nu h\|^q_{L^q(M)})^{\frac1q} &\lesssim \|h\|_{L^q(M)}, \quad 2\le q\le \infty.
\end{align}
Also, let \(A\) be a pseudodifferential operator whose symbol belongs to a bounded subset of \(S^0_{1-\delta_0,\delta_0}(M)\) and is supported in a small neighborhood of the support of \(B(x,\xi)\). Then, if \(\delta>0\) in \eqref{2.2} is sufficiently small,
\begin{equation}\label{commute}
\|A\sigma_\la A^{\theta_0}_\nu - AA^{\theta_0}_\nu \sigma_\la \|_{L^2\to L^6}=O(\la^{-\frac13+\frac43\delta_0}),
\end{equation}
\end{lemma}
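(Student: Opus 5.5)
For \eqref{2.33} I would interpolate between the two endpoints $q=2$ and $q=\infty$; for \eqref{commute} I would use a symbolic commutator estimate in $L^2$ followed by a Sobolev/Bernstein step to reach $L^6$.

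\textbf{The bound \eqref{2.33}.} At $q=\infty$ the claim is $\sup_\nu\|A^{\theta_0}_\nu h\|_{L^\infty}\lesssim\|h\|_{L^\infty}$. Since the symbols $A^{\theta_0}_\nu$ lie in a bounded subset of $S^0_{1-\delta_0,\delta_0}$ and are supported where $p(x,\xi)\sim\la$, I would integrate by parts in $\xi$ using \eqref{b.45a}. This should give the kernel bound
\[
|A_\nu^{\theta_0}(x,y)|\lesssim_N \la^{2(1-\delta_0)}\bigl(1+\la^{1-\delta_0}|x-y|\bigr)^{-N}.
\]
The exponent is $2(1-\delta_0)$ because the $\xi$-support of the symbol has area $\la^2\theta_0^2$ (angular width $\theta_0$ at radius $\la$, and an $x$-dependent strip of width $\theta_0\la$). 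Young's inequality then gives the uniform $L^\infty$ bound; the same argument also gives uniform $L^1$ and $L^2$ bounds.

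At $q=2$ the claim is $\sum_\nu\|A_\nu^{\theta_0}h\|_2^2\lesssim\|h\|_2^2$. Since $\sum_j (b_j^\theta)^2\lesssim 1$ and the supports have bounded overlap, the symbol $\sum_\nu |A_\nu^{\theta_0}|^2$ lies in $S^0_{1-\delta_0,\delta_0}$. Therefore
\[
\sum_\nu (A_\nu^{\theta_0})^*A_\nu^{\theta_0}=\mathrm{Op}\Bigl(\sum_\nu|A_\nu^{\theta_0}|^2\Bigr)+R.
\]
Here $R$ is a sum of remainders in $S^{-(1-2\delta_0)}$; these are absolutely summable because only $O(\theta_0^{-2})$ values of $\nu$ are relevant and the remainder can be taken of arbitrarily high negative order. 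Calderón--Vaillancourt, valid since $\delta_0<1/2$, then gives the $L^2$ bound. Complex interpolation for the vector-valued map $h\mapsto(A_\nu^{\theta_0}h)_\nu$ from $L^2\to\ell^2L^2$ and $L^\infty\to\ell^\infty L^\infty$ yields $\ell^qL^q$ for $2\le q\le\infty$.

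\textbf{The bound \eqref{commute}.} I would write $\sigma_\la$ via \eqref{2.2a} as $(2\pi)^{-1}\int\hat\rho(t)e^{it\la}e^{-itP}\,dt$ with $|t|\le2\delta$. The commutator then becomes
\[
A\int\hat\rho(t)e^{it\la}\bigl(e^{-itP}A_\nu^{\theta_0}e^{itP}-A_\nu^{\theta_0}\bigr)e^{-itP}\,dt .
\]
By Egorov's theorem, $e^{-itP}A_\nu^{\theta_0}e^{itP}$ has principal symbol $A_\nu^{\theta_0}\circ\Phi_{t}$. By the flow invariance \eqref{b.46}, the $a_j^\theta$ factor is unchanged. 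The cutoffs $\phi(x)$ and $\tilde\beta$ change only where $A$ vanishes: $\phi=1$ near the support of $B$, and $\tilde\beta\circ\Phi_t=\tilde\beta$ because $p$ is invariant. This requires $\delta$ small, so that the flow over time $2\delta$ stays inside the region where $\phi\equiv1$.

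Hence $A(\cdot)$ retains only lower-order Egorov terms. In the class $S_{1-\delta_0,\delta_0}$ these have order $-(1-2\delta_0)$, so the commutator difference should have $L^2\to L^2$ norm $O(\la^{-1+2\delta_0})$. I would verify this by a Duhamel formula,
\[
\frac{d}{ds}\Bigl(e^{-isP}A_\nu e^{isP}\Bigr)=e^{-isP}(-i)[P,A_\nu]e^{isP},
\]
where $[P,A_\nu]$ has symbol $\{p,A_\nu\}$ plus order $-1+2\delta_0$ terms. The principal part $\{p,A_\nu\}$ vanishes on the support relevant to $A$, again by \eqref{b.46}.

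To pass from $L^2$ to $L^6$, note that all operators are frequency localized at scale $\la$. The output of the difference therefore lives at frequency $\sim\la$ up to $O(\la^{-N})$, where the localization comes from $A$ and the $\tilde\beta$ factor. Sobolev/Bernstein in dimension $2$ gives $L^2\to L^6$ with a loss of $\la^{2(1/2-1/6)}=\la^{2/3}$. Combining,
\[
\la^{-1+2\delta_0}\cdot\la^{2/3}=\la^{-1/3+2\delta_0},
\]
which is weaker than the claimed $\la^{-1/3+\frac43\delta_0}$.

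To recover $\frac43\delta_0$ I would instead use the sharper Bernstein loss available for functions microlocalized to a $\theta_0$-tube in phase space. Their Fourier support has area $\la^2\theta_0^2$, so the loss is $(\la\theta_0)^{2/3}$ rather than $\la^{2/3}$. This gives
\[
\la^{-1+2\delta_0}\,\la^{\frac23(1-\delta_0)}=\la^{-\frac13+\frac43\delta_0}.
\]
The main obstacle is making this rigorous. It requires carrying out the Egorov/Duhamel expansion with $S_{1-\delta_0,\delta_0}$ symbols uniformly in $\nu$, and checking that the lower-order terms are still microlocalized to an $O(\theta_0)$ neighborhood of $\gamma_\nu$, so that the improved Bernstein inequality applies. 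This can be done via the kernel bound above applied to a slightly enlarged cutoff $\tilde A_\nu^{\theta_0}$ with $\tilde A_\nu R=R+O(\la^{-N})$, giving
\[
\|\tilde A_\nu^{\theta_0}\|_{L^2\to L^6}\lesssim(\la^{2(1-\delta_0)})^{1/2-1/6}.
\]
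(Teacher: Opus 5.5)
There is a genuine gap, and it comes from a single miscount that affects both halves of your argument. Your overall plan for \eqref{commute} matches the paper: Egorov gives the $L^2\to L^2$ bound $O(\la^{-1+2\delta_0})$, followed by a localized Bernstein/Young step. Your $q=2$ almost-orthogonality argument and the interpolation are also fine.

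The miscount is the size of the $\xi$-support of $A^{\theta_0}_\nu$. The conditions defining $b^{\theta}_j(\Psi,\Theta_1)$ constrain only the direction $\xi/p(x,\xi)$. The only radial cutoff is $\tilde\beta(p/\la)$. So for fixed $x$ the $\xi$-support is a sector of aperture $O(\theta_0)$ spanning the whole annulus $p\in(\la/4,4\la)$. Its area is $\sim\theta_0\la^2=\la^{2-\delta_0}$, not $\la^2\theta_0^2$.

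\textbf{The $q=\infty$ endpoint of \eqref{2.33}.} Because of this, your kernel bound with prefactor $\la^{2(1-\delta_0)}$ is false; the kernel reaches size $\la^{2-\delta_0}$ on the diagonal near $\gamma_\nu$. The isotropic bound you can actually prove is $\la^{2-\delta_0}(1+\la^{1-\delta_0}|x-y|)^{-N}$. Its $y$-integral is $\sim\la^{\delta_0}$, so Young does not give the $q=\infty$ endpoint.

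The paper closes this with Fermi coordinates along $\gamma_\nu$. There $a^{\theta_0}_j$ is homogeneous of degree zero and supported where $|\xi_1|/|\xi|\lesssim\theta_0$. Hence $\partial_{\xi_2}$-derivatives gain $\la^{-1}$ rather than $\la^{-(1-\delta_0)}$. This gives \eqref{2.36a} and the anisotropic kernel bound \eqref{ke}, whose $y$-integral is $O(1)$.

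\textbf{The $L^2\to L^6$ step in \eqref{commute}.} The same miscount breaks your last step. Tube microlocalization alone gives only $\|\tilde A^{\theta_0}_\nu\|_{L^2\to L^6}\approx\la^{\frac13(2-\delta_0)}$. This is sharp: test it on a Knapp wave packet whose frequency support fills the sector. Combined with $\la^{-1+2\delta_0}$, your argument therefore yields only $O(\la^{-\frac13+\frac53\delta_0})$.

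What is missing is localization in $|\xi|_g$ to a shell of width $\la^{1-\delta_0}$, and it has to come from $\sigma_\la$. The kernel of $\sigma_\la$ is $\la^{1/2}e^{i\la d_g(x,y)}a(x,y,\la)$, and $p(x,\nabla_x d_g(x,y))=1$. So nonstationary phase lets you insert, modulo $O(\la^{-N})$, the operator $A^{\theta_0}_1$ with symbol $\phi(x)\beta\bigl((p(x,\xi)-\la)/(C_0\la^{1-\delta_0})\bigr)$. It goes on the left of both $\tilde A^{\theta_0}_\nu AA^{\theta_0}_\nu\sigma_\la$ and $\tilde A^{\theta_0}_\nu A\sigma_\la A^{\theta_0}_\nu$, which you obtain first as in \eqref{c1} and \eqref{c2}.

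The composition $A^{\theta_0}_1\tilde A^{\theta_0}_\nu$ is then frequency-localized, for each $x$, to a $\la^{1-\delta_0}\times\la^{1-\delta_0}$ box. Its kernel is $O(\la^{2-2\delta_0}(1+\la^{1-\delta_0}|x-y|)^{-N})$, and Young gives $L^2\to L^6$ norm $\la^{\frac23(1-\delta_0)}$. Multiplying by your $L^2$ bound $\la^{-1+2\delta_0}$ yields exactly $\la^{-\frac13+\frac43\delta_0}$. Without this radial cutoff, the exponent $\frac43\delta_0$ is not reachable by your argument.
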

In subsequent applications, we will take \(A = B\) or \(A = B\circ A^{\theta_1}_\mu\) for some \(\theta_1 \ge \theta_0\) and $\mu\in \theta_1\cdot\Z^2$, which means its symbol belongs to \(S^0_{1-\delta_0,\delta_0}(M)\).

\begin{proof}
To prove \eqref{2.33} we note that, by interpolation, it suffices to prove the inequality for $q=2$ and $q=\infty$.
The estimate for $q=2$ just follows from the fact that the $S^0_{1-\delta_0,\delta_0}$ operators $\{A^{\theta_0}_\nu\}$ are almost
orthogonal due to \eqref{m1}. Let $A^{\theta_0}_\nu (x,y)$ denote the Schwartz kernel of the operator $A^{\theta_0}_\nu$. The estimate for $q=\infty$ follows from the fact that the kernels satisfy
\begin{equation}\label{2.36}
\sup_x \int |A^{\theta_0}_\nu (x,y)|\, dy \le C.
\end{equation} 
To see \eqref{2.36}, we choose a local coordinate chart in which the second coordinate axis coincides with the unit-speed geodesic obtained by projecting $\gamma_\nu$ onto $M$ (for instance, Fermi normal coordinates). We refer the reader to Chapter 3.2 of \cite{SFIOII} for the invariance of pseudodifferential operators under changes of coordinates.
On the support of $A^{\theta_0}_\nu(x,\xi)$ we then have $|x_1|\lesssim \theta_0$ and $|\xi_1|/|\xi|\lesssim \theta_0$. Moreover,
$
a_j^\theta(x,\xi)\equiv 1
$
if $x_1=\xi_1=0$. Consequently, it is straightforward to check that
\begin{equation}\label{2.36a}
\bigl|\partial_{\xi_1}^{j}\partial_{\xi_2}^{k} A^{\theta_0}_\nu(x,\xi)\bigr|
\lesssim \la^{-(1-\delta_0)j-k}.
\end{equation}
Thus, a simple integration by parts argument then yields
\begin{equation}\label{ke}
A^{\theta_0}_\nu(x,y)
=
O_N\!\left(
\la^{2-\delta_0}
(1+\la^{1-\delta_0}|x_1-y_1|)^{-N}
(1+\la|x_2-y_2|)^{-N}
\right)
\end{equation}
for all $N>0$, which implies \eqref{2.36}.

To prove \eqref{commute}, let us
define the  wider cutoffs, after recalling \eqref{qnusymbol}, by setting
\begin{equation}\label{a}
\tilde A^{\theta_0}_\nu(x,\xi)=
\sum_{\{j\in {\mathbb Z}^{2}: \, |\theta_0j-\nu|\le C_0\theta_0\}}\phi(x) \, a_j^{\theta_0}(x,\xi) \, \tilde\beta\bigl(p(x,\xi)/\la\bigr).
\end{equation}
If we fix $C_0$ large enough, it is not hard to show that as operators between any $L^p(M)\to L^q(M)$ spaces for $1\le p,q\le \infty$, we have, 
\begin{equation}\label{c1}
 AA^{\theta_0}_\nu \sigma_\la= \tilde A^{\theta_0}_\nu AA^{\theta_0}_\nu \sigma_\la+O_N(\la^{-N}), \quad \forall \, N>0,
\end{equation}
as well as
\begin{equation}\label{c2}
 A \sigma_\la A^{\theta_0}_\nu= \tilde A^{\theta_0}_\nu A \sigma_\la A^{\theta_0}_\nu+O_N(\la^{-N}), \quad \forall \, N>0.
\end{equation}
\eqref{c1} follows from an integration by parts argument, together with the fact that, since the symbol of the operator \(A\) belongs to \(S^0_{1-\delta_0,\delta_0}(M)\), its Schwartz kernel \(A(x,y)\) is \(O(\lambda^{-N})\) when \(|x-y| \gtrsim \theta_0\).
To prove \eqref{c2}, 
 it suffices to show as operators between any $L^p(M)\to L^q(M)$ spaces for $1\le p,q\le \infty$
\begin{equation}\label{c2a}
  (I-\tilde A^{\theta_0}_\nu) A \sigma_\la A^{\theta_0}_\nu=O_N(\la^{-N}), \quad \forall \, N>0.
\end{equation}
Note that by \eqref{b.46}, if we choose $\delta>0$ in \eqref{2.2} sufficiently small, we may assume
\begin{equation}\label{b.46a}
a^\theta_j(\Phi_s(x,\xi))=a^\theta_j(x,\xi),\,\,\forall |s|\le 2\delta,\,\,\,\text{and}\,\,\,(x,\xi)\in\supp A(x,\xi).
\end{equation}
Thus, for any $(x,\xi)$ in the support of the operator $(I-\tilde A^{\theta_0}_\nu) A$, and 
$(y,\eta)$ in the support of the operator $ A^{\theta_0}_\nu$
we have 
\begin{equation}\label{b.46b}
\Phi_s(x,\xi))\neq (y,\eta) \,\,\,\ |s|\le 2\delta.
\end{equation}
Since, by \eqref{2.2a}, the operator $\sigma_\la$ involves an average of $e^{itP}$ for $|t|\le 2\delta$, \eqref{c2a} also follows from an integration by parts argument along with the fact that the singularities of 
$e^{itP}$
 propagate along the geodesic flow, see e.g., \cite[Theorem 4.3.5]{SoggeHangzhou}.

Additionally, let $A^{\theta_0}_1$ be the pseudodifferential operator with symbol
\begin{equation}\label{A1}
A^{\theta_0}_1(x,\xi)=\phi(x)\beta\Bigl(\frac{p(x,\xi)-\lambda}{C_0\lambda^{1-\delta_0}}\Bigr).
\end{equation}
By choosing $C_0$ sufficiently large, one can similarly show that, as operators from $L^p(M)$ to $L^q(M)$ for $1\le p,q\le \infty$,
\begin{equation}\label{c1a}
\tilde A^{\theta_0}_\nu A A^{\theta_0}_\nu \sigma\lambda
= A_1 \tilde A^{\theta_0}_\nu A A^{\theta_0}_\nu \sigma_\lambda + O_N(\lambda^{-N}), \quad \forall, N>0,
\end{equation}
and
\begin{equation}\label{c2aa}
\tilde A^{\theta_0}_\nu A \sigma\lambda A^{\theta_0}_\nu
= A_1 \tilde A^{\theta_0}_\nu A \sigma_\lambda A^{\theta_0}_\nu + O_N(\lambda^{-N}), \quad \forall, N>0.
\end{equation}
To see this, recall that, by \eqref{2.2a}, $\sigma_\lambda$ is an oscillatory integral operator with phase function $\lambda d_g(x,y)$. Since $p(x,\nabla_x d_g(x,y)) = 1$, both \eqref{c1a} and \eqref{c2aa} follow from a nonstationary phase argument via integration by parts.

Note that for fixed $x$, the $\xi$ support of the symbol of the operator $A^{\theta_0}_1\tilde A^{\theta_0}_\nu$ is in a contained in a ball of radius $\la^{1-\delta_0}$
 the variant of \eqref{ke} with $\theta_0$ replaced by $C\theta_0$. 
 Thus, a simple integration by parts argument yields
\begin{equation}\label{kea}
(A^{\theta_0}_1\tilde A^{\theta_0}_\nu )(x,y)
=
O_N\!\left(
\la^{2-2\delta_0}
(1+\la^{1-\delta_0}|x-y|)^{-N}
\right).
\end{equation}
By Young's inequality, \eqref{kea} implies
\begin{equation}\label{c3}
   \| A^{\theta_0}_1\tilde A^{\theta_0}_\nu h\|_{L^6(M)}\lesssim \la^{\frac13(2-2\delta_0)}\|h\|_{L^2(M)}.
\end{equation}
By \eqref{c1}, \eqref{c2} and \eqref{c3}, to prove \eqref{commute}, it remains to show that 
\begin{equation}\label{cc4}
\bigl\|
 A \sigma_\la A^{\theta_0}_\nu
-A A^{\theta_0}_\nu \sigma_\la 
\, 
\bigr\|_{L^2\to L^2}
=
O(\la^{-1+2\delta_0}).
\end{equation}

To prove this we recall that by  \eqref{2.2a}
$$\sigma_\la=(2\pi)^{-1}\int^{2\delta}_{-2\delta} \Hat \rho(t) e^{it\la} e^{-itP}\, dt.
$$
Therefore by
Minkowski's integral inequality,
we would have
\eqref{cc4} if 
\begin{equation}\label{cc5}
\sup_{|t|\le 2\delta}\, 
\bigl\| A e^{-itP} A^{\theta_0}_\nu
-A A^{\theta_0}_\nu e^{-itP} 
\, 
\bigr\|_{L^2\to L^2}
=
O(\la^{-1+2\delta_0}).
\end{equation}

Write
$$\bigl[ A e^{-itP} A^{\theta_0}_\nu
-A A^{\theta_0}_\nu e^{-itP} \bigr]
=A
\, \bigl[
(e^{-itP}A^{\theta_0}_\nu e^{itP}) -  A^{\theta_0}_\nu]
\circ e^{-itP}.
$$
Since $e^{-itP}$ also has $L^2$-operator norm one, we
would obtain \eqref{cc5} from
\begin{equation}\label{cc6}
\sup_{|t|\le 2\delta} \, 
\bigl\|
A
\, \bigl[
(e^{-itP}A^{\theta_0}_\nu e^{itP}) - A^{\theta_0}_\nu\bigr]
\, 
\bigr\|_{L^2\to L^2} 
=
O(\la^{-1+2\delta_0}).
\end{equation}
By Egorov's theorem for symbols belonging to a bounded subset
of $S^0_{1-\delta_0, \delta_0}(M)$ (see Taylor~\cite[P.147]{TaylorPDO}), we have the symbol of the operator $A
\, \bigl[
(e^{-itP}A^{\theta_0}_\nu e^{itP}) - A^{\theta_0}_\nu\bigl]$  belong to a bounded subset
of $S^{-1+2\delta_0}_{1-\delta_0, \delta_0}(M)$. Thus, \eqref{cc6} follows from the fact that the symbol of the operator is supported in $|\xi|_g\in [\la/4, 4\la]$. See also the proof of (2.40) in \cite{huang2024curvature} for further details.
\end{proof}

For convenience and by abuse of the notation, taking \(A = B\) or \(A = B\circ A^{\theta_1}_\mu\), we will write
\begin{align}\label{eq: defn of tilde sigma}
    \tilde \sigma_\la=A\circ \sigma_\la,
\end{align}
which is an operator depending on $A$. By \eqref{m1},  and \eqref{qnusymbol}, if $A=B$, we have that, as operators between any $L^p(M)\to L^q(M)$ spaces,
for $1\le p,q\le \infty$, and for $\theta\ge \la^{-\delta_0}$, 
\begin{equation}\label{m11}
\tilde \sigma_\la =\sum_\nu \tilde \sigma_\la A^{\theta_0}_\nu +O_N(\la^{-N}), \quad \forall \, N>0.
\end{equation}
This just follows from the fact that $R(x,D)=I-\sum_\nu A^\theta_\nu $ has symbol
supported outside of a $2\delta$ neighborhood of $B(x,\xi)$. Similarly, if \(A = B\circ A^{\theta_1}_\mu\) for some \(\theta_1 \ge \theta_0\) and $\mu\in \theta_1\cdot\Z^2$, we have 
\begin{equation}\label{m11a}
\tilde \sigma_\la =\sum_{|\nu-\mu|\le C_1\theta_1} \tilde \sigma_\la A^{\theta_0}_\nu +O_N(\la^{-N}), \quad \forall \, N>0,
\end{equation}
for some constant $C_1>0$, which can be chosen to be large. Moreover, as in \eqref{a}, if we define the wider cutoffs $\tilde A^{\theta}_\mu$ by setting
\begin{equation}\label{aw}
    \tilde A^{\theta_1}_\mu(x,\xi)=\sum_{\{j\in {\mathbb Z}^{2}: \, |\theta_1 j-\mu|\le C_0\theta_1\}}\phi(x) \, a_j^{\theta_1}(x,\xi) \, \tilde\beta\bigl(p(x,\xi)/\la\bigr),
\end{equation}
we also have
\begin{equation}\label{m11b}
\tilde \sigma_\la =\sum_{|\nu-\mu|\le C_1\theta_1} \tilde \sigma_\la A^{\theta_0}_\nu \tilde A^{\theta_1}_\mu +O_N(\la^{-N}), \quad \forall \, N>0.
\end{equation}
Both \eqref{m11a} and \eqref{m11b} follow from the same argument as in the proof of \eqref{c2}.
For later use, it is also straightforward to verify that the $\tilde A^{\theta_1}_\mu$ operator also satisfies the estimate \eqref{2.33}, in the same manner as $A^{\theta_1}_\mu$.

In view of \eqref{m11} we have for $\theta_0=\la^{-\delta_0}$
\begin{equation}\label{m13}
\bigl(\tilde \sigma_\la h\bigr)^2=\sum_{\nu, \nu'} \bigl(\tilde \sigma_\la A^{\theta_0}_{\nu} h\bigr) \,
\bigl(\tilde \sigma_\la A^{\theta_0}_{\nu'} h\bigr) \, + \, O(\la^{-N}\|h\|_{L^2(M)}^2).
\end{equation}
If
$\theta_0=\la^{-\delta_0}$ then the $\nu\in\theta_0\cdot  {\mathbb Z}^{2}$ index a $\la^{-\delta_0}$-separated set in
${\mathbb R}^{2}$.  We need to organize the pairs of indices $\nu, \nu'$ in \eqref{m13} as in many earlier works
(see \cite{LeeBilinear} and \cite{TaoVargasVega}).  We consider dyadic cubes $\tau^\theta_\mu$ in 
${\mathbb R}^{2}$ of side length $\theta=2^m\theta_0$, $m=0,1,\dots$, with
$\tau^\theta_\mu$ denoting translations of the cube $[0,\theta)^{2}$ by
$\mu\in\theta{\mathbb Z}^{2}$.  Then two such dyadic cubes of side length $\theta$ are said to be
{\em close} if they are not adjacent but have adjacent parents of side length $2\theta$, and, in that case, we write
$\tau^\theta_{\mu} \sim \tau^\theta_{\mu'}$.  Note that close cubes satisfy $\text{dist}(\tau^\theta_{\mu},\tau^\theta_{\mu'})
\in [\frac14\theta, 4\theta]$ and so each fixed cube has $O(1)$ cubes which are "close" to it.  Moreover, as noted in \cite{TaoVargasVega},
any distinct points $\nu,\nu'\in {\mathbb R}^{2}$ must lie in a unique pair of close cubes in this Whitney decomposition
of ${\mathbb R}^{2}$.  Consequently, there must be a unique triple $(\theta=\theta_02^m, \mu,\mu')$ such that
$(\nu,\nu')\in \tau^\theta_{\mu}\times \tau^\theta_{\mu'}$ and $\tau^\theta_{\mu}\sim \tau^\theta_{\mu'}$.  We remark that by choosing $B$
to have small support we need only consider $\theta=2^m\theta_0\ll 1$.

Taking these observations into account implies that the bilinear sum in \eqref{m13} can be organized as follows:
\begin{multline}\label{m14}
\sum_{\{k\in {\mathbb N}: \, k\ge 10 \, \, \text{and } \, 
\theta=2^k\theta_0\ll 1\}}
\sum_{\{(\mu, \mu'): \, \tau^\theta_\mu
\sim \tau^\theta_{ \mu'}\}}
\sum_{\{(\nu, \nu')\in
\tau^\theta_\mu\times \tau^\theta_{ \mu'}\}}
\bigl(\tilde \sigma_\la
A^{\theta_0}_\nu h\bigr) 
\cdot \bigl(\tilde \sigma_\la
A^{\theta_0}_{ \nu'} h\bigr)
\\
+\sum_{(\nu, \nu')\in \Xi_{\theta_0}} 
\bigl( \tilde \sigma_\la A^{\theta_0}_\nu h \bigr) 
\cdot \bigl( \tilde \sigma_\la
A^{\theta_0}_{ \nu'} 
h\bigr)
,
\end{multline}
where $\Xi_{\theta_0}$ indexes the remaining pairs such
that $|\nu- \nu'|\lesssim \theta_0=\la^{-\delta_0}$,
including the diagonal ones where $\nu= \nu'$.

Then the key estimate that we shall use in the proof of Theorem \ref{thmb}, which follows from variable coefficient bilinear harmonic analysis arguments, is the following:

\begin{proposition}\label{locprop}
Let $\theta_0=\la^{-\delta_0}$ for some $0<\delta_0<\frac12$, we have 
\begin{equation}\label{2.44}
\|B e_\la\|_{L^{6}(M)}\lesssim_\e \Bigl(\, \sum_\nu \|B A^{\theta_0}_\nu (e_\la)\|_{L^{6}(M)}^{6}\, 
\Bigr)^{\frac{1}{6}} + \la^{\frac{\delta_0}{6}+\e}\|e_\la\|^{\frac13}_{L^\infty(M)}\|e_\la\|^{\frac23}_{L^2(M)}+\la^{-\frac16+\frac89\delta_0}\|e_\la\|_{L^2}.
\end{equation}
 Similarly, if $\theta_1\geq\theta_0$ and $\tilde A^{\theta_1}_\mu$ is defined as in \eqref{aw}, we have, for some constant $C>0$,
 \begin{multline}\label{2.44a}
\|B A^{\theta_1}_\mu (e_\la)\|_{L^{6}(M)}\lesssim_\e \Bigl(\, \sum_{|\nu-\mu|\le C\theta_1} \|B A^{\theta_0}_\nu (e_\la)\|_{L^{6}(M)}^{6}\, 
\Bigr)^{\frac{1}{6}} \\+ \la^{\frac{\delta_0}{6}+\e}\|A^{\theta_1}_\mu e_\la\|^{\frac13}_{L^\infty(M)}\|\tilde A^{\theta_1}_\mu e_\la\|^{\frac23}_{L^2(M)}+\la^{-\frac16+\frac89\delta_0}\Bigl(\sum_{|\nu-\mu|\le C\theta_1}\|A^{\theta_0}_\nu e_\la\|_{L^2}^2\Bigr)^{\frac16}\|e_\la\|_{L^2}^{\frac23}.
\end{multline}
\end{proposition}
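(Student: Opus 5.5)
We follow the scheme of \cite[Section 2.2]{huang2024curvature}. Take $A=B$ and $\tilde\sigma_\la=B\sigma_\la$, so that $Be_\la=\tilde\sigma_\la e_\la$. Square this and use \eqref{m13}--\eqref{m14} with $h=e_\la$, so that $\|Be_\la\|_{L^6}^2=\|(\tilde\sigma_\la e_\la)^2\|_{L^3}$ splits into a \emph{broad} part (pairs in close dyadic cubes $\tau^\theta_\mu\sim\tau^\theta_{\mu'}$ at scales $\theta=2^k\theta_0$, $k\ge10$) and a \emph{narrow} part indexed by $\Xi_{\theta_0}$. The number of dyadic scales is $O(\log\la)$, which costs only $\la^\e$. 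So it suffices to bound each scale $\theta$ of the broad part, and separately the narrow part.

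\emph{Narrow part.} For $(\nu,\nu')\in\Xi_{\theta_0}$, first replace $\tilde\sigma_\la A^{\theta_0}_\nu e_\la$ by $BA^{\theta_0}_\nu\sigma_\la e_\la=BA^{\theta_0}_\nu e_\la$. By \eqref{commute}, the error is $O(\la^{-\frac13+\frac43\delta_0})\|e_\la\|_{L^2}$ in $L^6$ for each $\nu$. Summing with the $L^2$ almost orthogonality \eqref{2.33}, or simply counting $\#\{\nu\}\lesssim\theta_0^{-2}$, and interpolating appropriately should produce the term $\la^{-\frac16+\frac89\delta_0}\|e_\la\|_{L^2}$; the exponent bookkeeping has to be checked. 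The main narrow term is then handled with the pointwise bound
\[
\Bigl|\sum_{\Xi_{\theta_0}}F_\nu F_{\nu'}\Bigr|\lesssim\sum_\nu|F_\nu|^2,\qquad F_\nu=BA^{\theta_0}_\nu e_\la,
\]
which holds because each $\nu$ has $O(1)$ neighbours. Next use $\|\sum_\nu|F_\nu|^2\|_{L^3}\le\bigl(\sum_\nu\|F_\nu\|_{L^6}^6\bigr)^{1/3}\cdot(\text{overlap})$. The point is that the supports in $x$ of the $F_\nu$ with $\nu$ varying over the direction index overlap only boundedly at each point modulo rapidly decaying tails; this follows from the kernel bound \eqref{ke}. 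Alternatively one gets this via \eqref{2.33} with $q=3$ applied to the family. Either route yields the first term of \eqref{2.44}.

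\emph{Broad part.} At scale $\theta$, for each close pair $(\mu,\mu')$, set $\Upsilon_\mu=\sum_{\nu\in\tau^\theta_\mu}\tilde\sigma_\la A^{\theta_0}_\nu e_\la$. These pieces are microlocalized to $\theta$-separated families of geodesics. Apply the bilinear oscillatory integral estimate of \cite{huang2024curvature}, which is based on \cite{LeeBilinear,TaoVargasVega}: for $\theta$-transversal pieces it gives the $L^2$ bilinear bound
\[
\|\Upsilon_\mu\Upsilon_{\mu'}\|_{L^2}\lesssim_\e\la^{\frac12+\e}\theta^{-\frac12}\cdots,
\]
with right-hand side in terms of $L^2$ norms of $A^\theta_\mu e_\la$. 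Interpolate this with the trivial $L^\infty$ bound $\|\Upsilon_\mu\Upsilon_{\mu'}\|_{L^\infty}\le\|e_\la\|_{L^\infty}^2$, using that $\sigma_\la$ and the cutoffs are bounded on $L^\infty$ by \eqref{2.9} and \eqref{2.36}, to get an $L^3$ bound. Sum over $\mu,\mu'$ using the almost orthogonality of the close pairs together with $\sum_\mu\|A^\theta_\mu e_\la\|_2^2\lesssim1$. The worst scale is $\theta=\theta_0$, where the factor $\theta^{-1/2}$ becomes $\la^{\delta_0/2}$; after the $\frac13$-power this gives $\la^{\frac{\delta_0}{6}+\e}\|e_\la\|_\infty^{1/3}\|e_\la\|_2^{2/3}$.

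The main obstacle is this broad step. One must state the variable-coefficient bilinear estimate at scale $\theta\ge\la^{-\delta_0}$ with the exact $\theta$-dependence, and sum the pairs so that the $L^3$ norm of the sum over $\mu$ is controlled. That requires a square-function/orthogonality argument in $L^3$ for products with separated Fourier/microlocal supports, which I would borrow from \cite[Section 2]{huang2024curvature}.

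For \eqref{2.44a} the same argument applies with $A=BA^{\theta_1}_\mu$. By \eqref{m11a} and \eqref{m11b}, only the $\nu$ with $|\nu-\mu|\le C\theta_1$ contribute, and the wider cutoff $\tilde A^{\theta_1}_\mu$ can be inserted. The $L^2$ norms in the bilinear step then become $\|\tilde A^{\theta_1}_\mu e_\la\|_2$ and the sup norm becomes $\|A^{\theta_1}_\mu e_\la\|_\infty$. The commutator error is summed only over the localized $\nu$, which gives the restricted $L^2$ sum appearing in the last term of \eqref{2.44a}.
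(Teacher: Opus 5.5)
Your proposal has a genuine gap. You try to bound both the narrow and the broad parts of $(\tilde\sigma_\la e_\la)^2$ directly in $L^3$, and neither can be done that way.

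\textbf{Narrow part.} The bounded-overlap claim is false. The index $\nu=(y_1,\eta_1)$ records a position \emph{and} a direction. The bound \eqref{ke} only localizes $F_\nu=BA^{\theta_0}_\nu e_\la$ to its own $\theta_0$-tube, and every point lies in $\sim\theta_0^{-1}=\la^{\delta_0}$ such tubes, one per direction. After your pointwise bound $|\sum_{\Xi_{\theta_0}}F_\nu F_{\nu'}|\lesssim\sum_\nu|F_\nu|^2$, H\"older only gives $\|\sum_\nu|F_\nu|^2\|_{L^3}\lesssim\la^{2\delta_0/3}(\sum_\nu\|F_\nu\|_{L^6}^6)^{1/3}$. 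That is a loss of $\la^{\delta_0/3}$ in $L^6$, which is fatal downstream. The inequality without this loss is false in general: for a point-focusing function, all $\sim\theta_0^{-1}$ pieces are comparable at the focal point, and replacing $F_\nu F_{\nu'}$ by $|F_\nu|^2$ discards the oscillation of the products. The fallback via \eqref{2.33} with $q=3$ also fails. That estimate bounds the pieces by the whole, whereas a duality proof of an $L^3$ bound for $\diag_A$ would need $(\sum_\nu\|\tilde A^{\theta_0}_\nu f\|_{L^{3/2}}^{3/2})^{2/3}\lesssim\|f\|_{L^{3/2}}$, which is false since $3/2<2$.

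The missing idea is never to estimate $\diag_A$ in $L^3$. The paper uses the pointwise inequality $|\tilde\sigma_\la e_\la|^6\le 2|\tilde\sigma_\la e_\la|^2\bigl(|\diag_A(e_\la)|^2+|\far_A(e_\la)|^2\bigr)$. It then expands $(\diag_A)^2=\sum_{\nu_1,\nu_2}T_{\nu_1}T_{\nu_2}$ and performs a \emph{second} Whitney decomposition in $(\nu_1,\nu_2)$. Only the near-diagonal quadrilinear part ${\tilde\Upsilon^{\text{diag}}_A}$ is compared with the $\ell^6$ sum, and this is done in $L^{3/2}$ (Lemma~\ref{diag1}). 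Its four factors lie in one $O(\theta_0)$-sector, so their product is microlocalized near $\nu_1$. The dual function $f\in L^3$ can therefore be replaced by $\tilde{\tilde A}^{\theta_0}_{\nu_1}f$, and \eqref{2.33} is applied at $q=3\ge 2$. The resulting term $\frac23\|(\tilde\sigma_\la e_\la)^2\|_{L^3}^3$ is absorbed into the left side.

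\textbf{Broad part.} At scale $\theta$ you must sum $\|\Upsilon_\mu\Upsilon_{\mu'}\|_{L^3}$ over $\sim\theta^{-2}$ close pairs. There is no $L^3$ orthogonality available, and the triangle inequality loses a power of $\theta^{-1}$. Interpolating the whole scale-$\theta$ sum instead would require an $L^\infty$ bound for it by $\|e_\la\|_\infty^2$. The trivial argument does not give that, since about $\theta^{-1}$ pairs are nonzero at each point. Separately, with the factor $\la^{\frac12+\e}\theta^{-\frac12}$ you wrote, you would end with $\la^{\frac16+\frac{\delta_0}{6}}\|e_\la\|_\infty^{1/3}$. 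In two dimensions the bilinear bound carries no $\la^{1/2}$, consistent with $\int|\far_A(h)|^2\lesssim\la^{\delta_0+\e}\|h\|_2^4$ in \eqref{5.7}.

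The pointwise splitting above removes any need for $L^3$ estimates of broad terms. One only uses
\begin{itemize}
\item $\int|\tilde\sigma_\la e_\la|^2|\far_A|^2\le\|\tilde\sigma_\la e_\la\|_\infty^2\|\far_A\|_{L^2}^2$, and
\item $\int|\tilde\sigma_\la e_\la|^2|{\tilde\Upsilon^{\text{far}}_A}|\le\|\tilde\sigma_\la e_\la\|_\infty^2\|{\tilde\Upsilon^{\text{far}}_A}\|_{L^1}$.
\end{itemize}
So only the $L^2$ and $L^1$ bilinear bounds of Lemma~\ref{leelemma} are needed, and there orthogonality across pairs is available. Note that the off-diagonal part of $(\diag_A)^2$ is itself such a broad term, which your scheme has no counterpart for. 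Your sketch of \eqref{2.44a} inherits both problems.
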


\begin{proof}
Recall our definition \eqref{eq: defn of tilde sigma} for $\tilde \sigma_\la=A\circ \sigma_\la$ and recall that by \eqref{m11}, we have
\begin{equation}\label{5.1}
\tilde \sigma_\la -\sum_\nu \tilde \sigma_\la A^{\theta_0}_\nu =R_\la
\quad \text{where } \, \, \|R_\la\|_{L^2\to L^\infty}=O_N(\la^{-N}) \quad \forall \, N>0.
\end{equation}
Thus, for any $h\in L^2(M)$,
\begin{equation}\label{5.2}
(\tilde \sigma_\la h)^2 =\sum_{\nu,\nu'}(\tilde \sigma_\la A^{\theta_0}_\nu h) \cdot (\tilde \sigma_\la A^{\theta_0}_{\nu'} h)
+O(\la^{-N}\|h\|^2_{L^2(M)}).
\end{equation}

Let us set, for any $h\in L^2(M)$,
\begin{equation}\label{diag}
\diag_A(h)=\sum_{(\nu,\nu')\in \Xi_{\theta_0}} (\tilde \sigma_\la A^{\theta_0}_\nu h)
\cdot (\tilde \sigma_\la A^{\theta_0}_{\nu'} h),
\end{equation}
and
\begin{equation}\label{5.4}
\far_A(h)=\sum_{(\nu,\nu')\notin \Xi_{\theta_0}} (\tilde \sigma_\la A^{\theta_0}_\nu h)
\cdot (\tilde \sigma_\la A^{\theta_0}_{\nu'} h) + O(\la^{-N}\|h\|_{L^2(M)}^2),
\end{equation}
with the last term containing the error terms in \eqref{5.2}.  Thus,
\begin{equation}\label{5.5}
(\tilde \sigma_\la h)^2 = \far_A(h)+\diag_A(h).
\end{equation}
Note that the summation in $\diag_A(h)$ is over near diagonal pairs $(\nu,\nu')$ by \eqref{m14}.  In particular,
for $(\nu,\nu')\in \Xi_{\theta_0}\subset \theta_0\cdot {\mathbb Z}^{2}\times \theta_0\cdot {\mathbb Z}^{2}$  we have $|\nu-\nu'|\le C\theta_0$ for 
some uniform positive constant $C$.  The other term $\far(h)$ in \eqref{5.5} includes the remaining pairs, many of which are
far from the diagonal, and this will contribute to the last term in \eqref{2.44}.

Let us further define
\begin{equation}\label{Tnu}
   T_\nu h=\sum_{\nu': \,(\nu, \nu')\in
\Xi_{\theta_0}}( \tilde\sigma_\la A^{\theta_0}_\nu h )(\tilde\sigma_\la A^{\theta_0}_{\nu'} h), 
\end{equation}
and write
\begin{equation}\label{b255}
\begin{aligned}
    ( \diag_A(h))^{2} &=\big(\sum_\nu T_\nu h\big)^2= \sum_{\nu_1, \nu_2}  T_{\nu_1} h \cdot T_{\nu_2} h.
    \end{aligned}
\end{equation}
As in \eqref{m14}, if we assume that $B(x,\xi)$ has small conic support, the 
sum in \eqref{b255} can be organized as
\begin{equation}\label{organize}
\begin{aligned}
    &\left( \sum_{\{k\in {\mathbb N}: \, k\ge 20 \, \, \text{and } \, 
\theta=2^k\theta_0\ll 1\}} \,  \, 
\sum_{\{(\mu_1, \mu_2): \, \tau^\theta_{\mu_1}
\sim \tau^\theta_{ \mu_2}\}}
\sum_{\{(\nu_1,\nu_2)\in
\tau^\theta_{\mu_1}\times \tau^\theta_{\mu_2}\}}
+\sum_{(\nu_1,  \nu_2)\in \tilde\Xi_{\theta_0}}
\right)T_{\nu_1} h T_{\nu_2} h,  \\
&={\tilde\Upsilon^{\text{far}}_A}(h)+{\tilde\Upsilon^{\text{diag}}_A}(h)
\end{aligned}
\end{equation}
Here $\tilde\Xi_{\theta_0}$ indexes the near diagonal pairs. This is another Whitney decomposition similar to \eqref{m14}, but the diagonal set $\tilde\Xi_{\theta_0}$ is much larger than the set $\Xi_{\theta_0}$ in \eqref{m14}. It is not hard to check that  $|\nu-\nu'|\le 2^{11}\theta_0$ if $(\nu,\nu')\in \Xi_{\theta_0}$
while  $|\nu_1-\nu_2|\le 2^{21}\theta_0$ if $(\nu_1,\nu_2)\in \tilde\Xi_{\theta_0}$.

We will need the following two lemmas.
\begin{lemma}\label{diag1}  If $A=B$, then
for any $h\in L^2(M)$, we have
\begin{equation}\label{star2}
\|{\tilde\Upsilon^{\text{diag}}_A}(h)\|_{L^{3/2}(M)}
\lesssim \left(\sum_\nu \|\tilde \sigma_\la
A^{\theta_0}_\nu h\|_{L^{6}(M)}^{6}\right)^{2/3}
+O(\la^{-N}\|h\|^4_{L^2(M)}).
\end{equation}
Similarly, if $A=B\circ A_\mu^{\theta_1}$, then
for any $h\in L^2(M)$, we have, for some constant $C>0$,
\begin{equation}\label{star2a}
\|{\tilde\Upsilon^{\text{diag}}_A}(h)\|_{L^{3/2}(M)}
\lesssim \left(\sum_{|\nu-\mu|\le C\theta_1}  \|\tilde \sigma_\la
A^{\theta_0}_\nu h\|_{L^{6}(M)}^{6}\right)^{2/3}
+O(\la^{-N}\|h\|^4_{L^2(M)}).
\end{equation}

\end{lemma}
\begin{proof}
We only give the proof of \eqref{star2}, the proof of \eqref{star2a} follows from the same argument. 
The idea is similar to the proof of Lemma A.1 in \cite{huang2024curvature}.
Let $\tilde A^{\theta_0}_\nu$ be defined as in \eqref{a}, recall that by \eqref{c2}, we have 
\begin{equation}\label{aa}
 \tilde\sigma_\la A^{\theta_0}_\nu= \tilde A^{\theta_0}_\nu \tilde \sigma_\la A^{\theta_0}_\nu+O_N(\la^{-N}), \quad  \forall\, N>0.
\end{equation}

Therefore, to prove \eqref{star2}, by duality,  it suffices to show that 
\begin{multline}\label{g'}
\left|\sum_{(\nu_1,  \nu_2)\in \tilde\Xi_{\theta_0}}\sum_{\nu_1',\nu'_2}
\int ( \tilde A^{\theta_0}_{\nu_1} \tilde \sigma_\la
 A^{\theta_0}_{\nu_1} h) (\tilde A^{\theta_0}_{\nu'_1} \tilde \sigma_\la
 A^{\theta_0}_{\nu'_1} h) ( \tilde A^{\theta_0}_{\nu_2} \tilde \sigma_\la
 A^{\theta_0}_{\nu_2} h) (\tilde A^{\theta_0}_{\nu'_2} \tilde \sigma_\la
 A^{\theta_0}_{\nu'_2} h) \cdot \overline{f} \,
dx \right|
\\
\lesssim\left(\sum_\nu \|\tilde \sigma_\la
A^{\theta_0}_\nu h\|_{L^6(M)}^{6}\right)^{2/3}
+O(\la^{-N}\|h\|_{L^2(M)}^4), \quad
\text{if } \, \, \|f\|_{L^3(M)}=1.
\end{multline}
Here $(\nu_1,  \nu'_1)\in \Xi_{\theta_0}$,  $(\nu_2,  \nu'_2)\in \Xi_{\theta_0}$, and the set $\tilde\Xi_{\theta_0}$ is as in \eqref{organize}.  So $\nu_1, \nu'_1, \nu_2, \nu'_2$ in \eqref{g'} satisfy $|\nu_1-\nu'_1|+|\nu_1-\nu_2|+|\nu_1-\nu'_2|=O(\la^{-\delta_0})$.

Let us also define the wider cutoffs $\tilde{\tilde A^{\theta_0}_\nu}$ by setting
\begin{equation}\label{aaa}
\tilde{\tilde A^{\theta_0}_\nu}(x,\xi)=
\sum_{\{j\in {\mathbb Z}^{2}: \, |\theta_0j-\nu|\le \tilde{C_0}\theta_0\}}\tilde \phi(x) \, a_j^\theta(x,\xi) \, \tilde{\tilde\beta}\bigl(p(x,\xi)/\la\bigr).
\end{equation}
Here $\tilde \phi(x)\in C_0^\infty(\Omega)$ equals one in a neighborhood of the support of $\phi$, and $\tilde{\tilde\beta}\in C_0^\infty(\R)$ equals one in a neighborhood of the support of $\tilde\beta(x)$. We choose $\tilde{C_0}$ such that $\tilde{C_0}\gg C_0$ where $C_0$ is the constant appearing in \eqref{a}. It is straightforward to verify that
the $\tilde{\tilde A^{\theta_0}_\nu}$ operator also satisfies the estimate \eqref{2.33}, in the same manner as $A^{\theta_0}_\nu$.

For later use, let us also recall that, by \eqref{a}, for each fixed $x$ the support of
$\xi\mapsto \tilde A^{\theta_0}_\nu(x,\xi)$ is contained in a
cone of aperture $\lesssim\theta_0= \la^{-\delta_0}$.  So if
$(\nu_1,  \nu_2)\in \tilde\Xi_{\theta_0}$  then all of  
$\xi \mapsto  \tilde A^{\theta_0}_{\nu_1}(x,\xi)$,
$\xi \mapsto  \tilde A^{\theta_0}_{\nu_1'}(x,\xi)$, $\xi \mapsto  \tilde A^{\theta_0}_{\nu_2}(x,\xi)$, $\xi \mapsto  \tilde A^{\theta_0}_{\nu_2'}(x,\xi)$ are supported
for every fixed $x$ in a common cone of aperture $O(\la^{-\delta_0})$. Thus,
if we fix $C_1$ sufficiently large and $\tilde {\tilde\beta}\in C_0^\infty(\R)$ equals one in sufficiently large neighborhood of the support of $\tilde\beta(x)$,  it is not hard to check that
\begin{multline}\label{c}
\text{if } \, (\nu_1,  \nu_2)\in \tilde\Xi_{\theta_0}  \, \,
\text{and } \, \, \bigl(1- {\tilde{\tilde A^{\theta_0}_{\nu_1}}(y,\xi)} \bigr)
A^{\theta_0}_{\nu_1}(y,\xi_1)A^{\theta_0}_{\nu'_1}(y,\xi_2)A^{\theta_0}_{\nu_2}(y,\xi_3)A^{\theta_0}_{\nu'_2}(y,\xi_4)\ne0,
\\
\text{then } \, \, |\xi-(\xi_1+\xi_2+\xi_3+\xi_4)|\ge c\theta_0\la,
\end{multline}
for some fixed constant $c>0$. 
Therefore,  by an integration by parts argument we have
 \begin{equation*}
     \|(I-\tilde{\tilde A^{\theta_0}_{\nu_1}})^*
\bigl(\tilde A^{\theta_0}_{\nu_1} h_1 \cdot \tilde A^{\theta_0}_{\nu'_1} h_2 \cdot \tilde A^{\theta_0}_{\nu_2} h_3 \cdot \tilde A^{\theta_0}_{\nu'_2} h_4\bigr)
\|_{L^\infty(M)}
\lesssim_N\la^{-N}\prod_{i=1}^4\|h_i\|_{L^1(M)}  \qquad
\forall \, N>0.
 \end{equation*}
Thus, modulo $O(\la^{-N}\|h\|_{L^2(M)}^4)$ errors, the left side of \eqref{g'} is dominated by

\begin{equation}\label{gg'}
    \begin{aligned}
   \Bigl|&\sum_{(\nu_1,  \nu_2)\in \tilde\Xi_{\theta_0}} \sum_{\nu_1',\nu'_2}
\int ( \tilde A^{\theta_0}_{\nu_1} \tilde \sigma_\la
 A^{\theta_0}_{\nu_1} h) (\tilde A^{\theta_0}_{\nu'_1} \tilde \sigma_\la
 A^{\theta_0}_{\nu'_1} h) ( \tilde A^{\theta_0}_{\nu_2} \tilde \sigma_\la
 A^{\theta_0}_{\nu_2} h) (\tilde A^{\theta_0}_{\nu'_2} \tilde \sigma_\la
 A^{\theta_0}_{\nu'_2} h) \cdot \overline{f} \,
dx \Bigr|
\\
&\le \bigl(\sum_{(\nu_1,  \nu_2)\in \tilde\Xi_{\theta_0}}\sum_{\nu_1',\nu'_2}
\|  ( \tilde A^{\theta_0}_{\nu_1} \tilde \sigma_\la
 A^{\theta_0}_{\nu_1} h) (\tilde A^{\theta_0}_{\nu'_1} \tilde \sigma_\la
 A^{\theta_0}_{\nu'_1} h) ( \tilde A^{\theta_0}_{\nu_2} \tilde \sigma_\la
 A^{\theta_0}_{\nu_2} h) (\tilde A^{\theta_0}_{\nu'_2} \tilde \sigma_\la
 A^{\theta_0}_{\nu'_2} h)\|_{L^{3/2}(M)}^{3/2}
\bigr)^{2/3} \\
&\qquad
\cdot \bigl(\sum_{(\nu_1,  \nu_2)\in \tilde\Xi_{\theta_0}}
\|\tilde{\tilde A^{\theta_0}_{\nu_1}} f\|_{L^3(M)}^3\bigr)^{1/3}
\\
&\lesssim 
\bigl(\sum_\nu \|\tilde A^{\theta_0}_\nu 
\tilde \sigma_\la  A^{\theta_0}_\nu h\|^{6}_{L^6(M)}
\bigr)^{2/3} \cdot \bigl(
\sum_\nu\| \tilde{\tilde A^{\theta_0}_\nu} f\|_{L^3(M)}^3\bigr)^{1/3}
\\
&\lesssim \bigl(\sum_\nu \|\tilde A^{\theta_0}_\nu 
\tilde \sigma_\la A^{\theta_0}_\nu h\|^{6}_{L^6(M)}
\bigr)^{2/3},  
    \end{aligned}
\end{equation}
using H\"older's inequality, the fact that
if $\nu_1$ is fixed there are just $O(1)$ indices $\nu'_1, \nu_2$ and $\nu'_2$ with
$(\nu_1,\nu_2)\in \tilde\Xi_{\theta_0}$, $(\nu_1,\nu'_1)\in \Xi_{\theta_0}$ and $(\nu_2,\nu'_2)\in \Xi_{\theta_0}$, followed by
\eqref{2.33} with $q=3$ for the $\tilde{\tilde A^{\theta_0}_\nu}$ operator.  Based on this, modulo
$O(\la^{-N}\|h\|_{L^2(M)}^4)$, the left side
of \eqref{g'} is dominated by
$(\sum_\nu \|\tilde A^{\theta_0}_\nu \tilde \sigma_\la
 A^{\theta_0}_\nu h\|_{L^6}^{6})^{2/3}$.
By using \eqref{c2} again we conclude that this last expression is dominated by $(\sum_\nu \|\tilde \sigma_\la A^{\theta_0}_\nu h\|_{L^6(M)}^{6})^{2/3}+ O(\la^{-N}\|h\|_{L^2(M)}^4)$, which yields \eqref{star2} and completes the proof of Lemma~\ref{diag1}.
\end{proof}

\begin{lemma}\label{leelemma} Let $\far_A$ be as in \eqref{5.4}, and, as above $\theta_0=\la^{-\delta_0}$. Then for all $\e>0$ we have
\begin{equation}\label{5.7}
\int_M |\far_A(h)|^{2}\, dx \lesssim_\e\la^{\delta_0+\e} \, 
\|h\|^4_{L^2(M)},
\end{equation}
assuming, as in Proposition~\ref{locprop}, that the conic support of $B(x,\xi)$ in \eqref{2.8} as well as $\delta$ and $\tilde\delta$ in \eqref{2.2} are sufficiently small.
Similarly, if  ${\tilde\Upsilon^{\text{far}}_A}(h)$ is as in \eqref{organize}, we have
\begin{equation}\label{5.7'}
\int_M |{\tilde\Upsilon^{\text{far}}_A}(h)|\, dx \lesssim_\e\la^{\delta_0+\e} \, 
\|h\|^4_{L^2(M)}.
\end{equation}
\end{lemma}
\begin{proof}
   \eqref{5.7} and \eqref{5.7'} follow from Lemma A.2 in \cite{huang2024curvature}. Although Lemma A.2 is proved there for the special case $\delta_0=\frac18$ and $A=B$, the same argument applies more generally for any $0<\delta_0<1/2$ and for $A=B\circ A_\mu^{\theta_1}$. The proof uses bilinear oscillatory integral estimates of Lee~\cite{LeeBilinear} and arguments of previous work of Blair and Sogge in \cite{BlairSoggeRefined,blair2015refined,SBLog}. And this is also where we used the condition on $\delta, \tilde\delta$ in \eqref{2.2a}, in order to apply the bilinear oscillatory integral theorems of Lee ~\cite{LeeBilinear}. See the appendix in \cite{huang2024curvature} for more details.\end{proof}

   Note that if $A=B\circ A_\mu^{\theta_1}$, it is not hard to see 
   \eqref{m11b} implies that $\far_A(h)=\far_A(\tilde A^{\theta_1}_\mu h)+O(\la^{-N})$ and ${\tilde\Upsilon^{\text{far}}_A}(h)={\tilde\Upsilon^{\text{far}}_A}(\tilde A^{\theta_1}_\mu h)+O(\la^{-N})$. Thus,  \eqref{5.7} and \eqref{5.7'} also implies that
\begin{equation}\label{5.7a}
\int_M |\far_A(h)|^{2}\, dx \lesssim_\e\la^{\delta_0+\e} \, 
\|\tilde A^{\theta_1}_\mu h\|^4_{L^2(M)},
\end{equation}
as well as 
\begin{equation}\label{5.7b}
\int_M |{\tilde\Upsilon^{\text{far}}_A}(h)|\, dx \lesssim_\e\la^{\delta_0+\e} \, 
\|\tilde A^{\theta_1}_\mu h\|^4_{L^2(M)}.
\end{equation}

Now we continue the proof of Proposition~\ref{locprop} and we will use $A$ to represent either $B$ or $B \circ A^{\theta_1}_\mu $. 
Note that 
$$| \tilde \sigma_\la e_\la \,  \tilde \sigma_\la e_\la |^{3}
\le 2 | \tilde \sigma_\la e_\la \,  \tilde \sigma_\la e_\la |
\cdot
\bigl(|\diag_A(e_\la)|^{2}+|\far_A(e_\la)|^{2}\bigr).
$$
So we have 
\begin{align}\label{5.8}
\begin{split}
    &\|A e_\la\|^{6}_{L^{6}(M)}=\|\tilde \sigma_\la e_\la\|^{6}_{L^{6}(M)}=\int_{M}  | \tilde \sigma_\la e_\la \cdot \tilde \sigma_\la e_\la|^{3} \, dx\\
    &\le  2\int_{M} | \tilde \sigma_\la e_\la \cdot \tilde \sigma_\la e_\la| \, |\diag_A(e_\la)|^{2} \, dx+ 2\int_{M} | \tilde \sigma_\la e_\la \cdot \tilde \sigma_\la e_\la| \, |\far_A(e_\la)|^{2} \, dx  = I + II.
\end{split}
\end{align}

To estimate $II$, note that $\tilde\sigma_\la e_\la=A e_\la$. If $A=B$,
by \eqref{2.9} and  \eqref{5.7}, we have
\begin{equation*}
II\lesssim_\e \|\tilde \sigma_\la e_\la\|^{2}_{L^\infty(M)} \cdot \la^{\delta_0+\e} \|e_\la\|^4_{L^2(M)}
\lesssim \la^{\delta_0+\e}\|e_\la\|^2_{L^\infty(M)}\|e_\la\|^4_{L^2(M)}.
\end{equation*}
Hence $II^{1/6}$ is dominated by the second term in \eqref{2.44}.
 If $A=B\circ A_\mu^{\theta_1}$,
by \eqref{2.9} and  \eqref{5.7a}, we have
\begin{equation*}
II\lesssim_\e \|\tilde \sigma_\la e_\la\|^{2}_{L^\infty(M)} \cdot \la^{\delta_0+\e} \|\tilde A^{\theta_1}_\mu e_\la\|^4_{L^2(M)}
\lesssim \la^{\delta_0+\e}\| A^{\theta_1}_\mu e_\la\|^2_{L^\infty(M)}\|\tilde A^{\theta_1}_\mu e_\la\|^4_{L^2(M)}.
\end{equation*}
Hence $II^{1/6}$ is dominated by the second term in \eqref{2.44a}.

To estimate the first term $I$,  by \eqref{organize}, we have 
\begin{equation}\label{5.8n2}
\begin{aligned}
    I&=
2\int_{M} | \tilde \sigma_\la e_\la \cdot \tilde \sigma_\la e_\la| \, |\diag_A(e_\la)|^{2} \, dx  \\
&\le 2\int_{M} | \tilde \sigma_\la e_\la \cdot \tilde \sigma_\la e_\la| \, |{\tilde\Upsilon^{\text{diag}}_A}(e_\la)| \, dx +2\int_{M} | \tilde \sigma_\la e_\la \cdot \tilde \sigma_\la e_\la| \, |{\tilde\Upsilon^{\text{far}}_A}(e_\la)| \, dx \\
&= \bA+\bB 
\end{aligned}
\end{equation}

To estimate $\bB$, if $A=B$, we use \eqref{5.7'} to get
\begin{equation*}
    \bB\lesssim_\e \|\tilde \sigma_\la e_\la\|^{2}_{L^\infty(M)} \cdot \la^{\delta_0+\e}  \|e_\la\|^4_{L^2(M)}
\lesssim_\e  \la^{\delta_0+\e}\|e_\la\|^2_{L^\infty(M)}\|e_\la\|^4_{L^2(M)},
\end{equation*}
so $\bB^{1/6}$ is also dominated by the second term in \eqref{2.44}.
If $A=B\circ A_\mu^{\theta_1}$,  by \eqref{5.7b}, we have
\begin{equation*}
    \bB\lesssim_\e \|\tilde \sigma_\la e_\la\|^{2}_{L^\infty(M)} \cdot \la^{\delta_0+\e}  \|\tilde A^{\theta_1}_\mu e_\la\|^4_{L^2(M)}
\lesssim_\e  \la^{\delta_0+\e}\| A^{\theta_1}_\mu e_\la\|^2_{L^\infty(M)}\|\tilde A^{\theta_1}_\mu e_\la\|^4_{L^2(M)},
\end{equation*}
so $\bB^{1/6}$ is dominated by the second term in \eqref{2.44a}.

Thus, we just need to see that we also have suitable bounds for $\bA^{1/6}$. By  H\"older's inequality, Young's inequality, \eqref{star2} and \eqref{star2a}, we have
\begin{align*}
\bA
&\le 2\|\tilde\sigma_\la e_\la \,  \tilde\sigma_\la e_\la\|_{L^{3}(M)} \cdot \|{\tilde\Upsilon^{\text{diag}}_A}(e_\la)\|_{L^{3/2}(M)}
\\
&\le \tfrac{2}{3}  \|\tilde\sigma_\la e_\la \,  \tilde\sigma_\la e_\la\|_{L^{3}(M)}^{3} + \tfrac{4}{3} \|{\tilde\Upsilon^{\text{diag}}_A}(e_\la)\|_{L^{3/2}(M)}^{3/2}
\\
&\le \begin{cases}
     \tfrac{2}{3}  \|\tilde\sigma_\la e_\la \,  \tilde\sigma_\la e_\la \|_{L^{3}(M)}^{3} +C
 \sum_{\nu}  \, \|\tilde\sigma_\la A^{\theta_0}_{\nu}  e_\la\|_{L^{6}(M)}^{6}\, \, + \, O(\la^{-N}\|e_\la\|^6_{L^2(M)}),\,\,\,\text{if}\,\,A=B \\
  \tfrac{2}{3}  \|\tilde\sigma_\la e_\la \,  \tilde\sigma_\la e_\la \|_{L^{3}(M)}^{3} +C
\sum_{|\nu-\mu|\le C\theta_1}  \, \|\tilde\sigma_\la A^{\theta_0}_{\nu}  e_\la\|_{L^{6}(M)}^{6}\, \, + \, O(\la^{-N}\|e_\la\|^6_{L^2(M)}),\,\,\,\text{if}\,\,A=B\circ A_\mu^{\theta_1}
\end{cases}
\end{align*}
for some positive constant $C>0$.
The first term on the right can be absorbed in the left side of \eqref{5.8}. It suffices to estimate the second term.

If $A=B$, we have
\begin{equation}\label{260}
    \begin{aligned}
      &\sum_\nu \|\tilde \sigma_\la A^{\theta_0}_\nu e_\la\|^{6}_{L^{6}(M)}=\sum_\nu \|\tilde \sigma_\la A^{\theta_0}_\nu e_\la\|_{L^6}^2\cdot \|\tilde \sigma_\la A^{\theta_0}_\nu e_\la\|_{L^6}^{4}\\
    \lesssim&\sum_\nu \|\tilde \sigma_\la A^{\theta_0}_\nu e_\la\|_{L^6}^2\cdot \| BA^{\theta_0}_\nu\sigma_\la  e_\la\|_{L^6}^{4}+ \sum_\nu \|\tilde \sigma_\la A^{\theta_0}_\nu e_\la\|_{L^6}^2\cdot \| (B\sigma_\la A^{\theta_0}_\nu-BA^{\theta_0}_\nu\sigma_\la) e_\la\|_{L^6}^{4}\\
    \lesssim& \sum_\nu \|\tilde \sigma_\la A^{\theta_0}_\nu e_\la\|_{L^6}^2\cdot \| B A^{\theta_0}_\nu\sigma_\la e_\la \|_{L^6}^{4}+\sum_\nu \la^{\frac1{3}}\|A^{\theta}_\nu e_\la\|_{L^2}^2\cdot \la^{4(-\frac13+\frac43\delta_0)}\|e_\la\|_{L^2}^{4}\\
    \lesssim & \sum_\nu \|\tilde \sigma_\la A^{\theta_0}_\nu e_\la \|_{L^6}^2 \cdot \| B A^{\theta_0}_\nu\sigma_\la  e_\la \|_{L^6}^{4}+\la^{\frac13+4(-\frac13+\frac43\delta_0)}\|e_\la\|_{L^2}^{6}.   
    \end{aligned}
\end{equation}
In the second inequality we used \eqref{commute} and the fact that 
\begin{equation}\label{local1}
\|\tilde \sigma_\la\|_{L^2(M) \to L^6(M)}\lesssim \la^{\frac16},
\end{equation}
which is a consequence of \eqref{2.9} and the $L^q$ spectral cluster bounds of Sogge \cite{sogge881}. In the third inequality, we used \eqref{2.33}.
Using H\"older's inequality in the last line, we obtain
\begin{align}\label{2}
    \sum_\nu \|\tilde \sigma_\la A^{\theta_0}_\nu e_\la\|^{6}_{L^{6}(M)}\le C\Bigl(\sum_\nu \|\tilde \sigma_\la A^{\theta_0}_\nu e_\la\|_{L^6}^{6}\Bigr)^{\frac13}\Bigl(\sum_\nu \|B A^{\theta_0}_\nu \sigma_\la e_\la\|_{L^6}^{6}\Bigr)^{\frac{2}{3}} +C\la^{-1+\frac{16}{3}\delta_0}\|e_\la\|_{L^2}^{6},
\end{align}
with some constant $C>0$.
By Young's inequality, for any $D>0$, we can bound the right-hand side above as follows
\begin{align*}
    &C\Bigl(\sum_\nu \|\tilde \sigma_\la A^{\theta_0}_\nu e_\la \|_{L^6}^{6}\Bigr)^{\frac1{3}}\Bigl(\sum_\nu \|B A^{\theta_0}_\nu \sigma_\la e_\la\|_{L^6}^{6}\Bigr)^{\frac{2}{3}}\\
    =&C D \Bigl(\sum_\nu \|\tilde \sigma_\la A^{\theta_0}_\nu e_\la\|_{L^6}^{6}\Bigr)^{\frac1{3}}\cdot D^{-1}\Bigl(\sum_\nu \|B A^{\theta_0}_\nu \sigma_\la e_\la\|_{L^6}^{6}\Bigr)^{\frac{2}{3}}\\
    \le& C \left( \tfrac1{3}D^{3} \sum_\nu \|\tilde \sigma_\la A^{\theta_0}_\nu e_\la\|_{L^6}^{6}+ \tfrac{2}{3}D^{-\frac{3}{2}}\sum_\nu \| BA^{\theta_0}_\nu \sigma_\la e_\la\|_{L^6}^{6} \right).
\end{align*}
If $D>0$ is chosen to be small enough so that $\tfrac13CD^{3}<\frac{1}{2}$, we can absorb the contribution of the first term above into the left-hand side of \eqref{2}, and conclude that
\begin{align}\label{3}
    \sum_\nu \|\tilde \sigma_\la A^{\theta_0}_\nu e_\la\|^{6}_{L^{6}(M)}&\lesssim \sum_\nu \|B A^{\theta_0}_\nu \sigma_\la e_\la \|_{L^6}^{6}+\la^{-1+\frac{16}{3}\delta_0}\|e_\la\|_{L^2}^{6}
    \lesssim  \sum_\nu \| BA^{\theta_0}_\nu e_\la \|_{L^6}^{6}+\la^{-1+\frac{16}{3}\delta_0}\|e_\la\|_{L^2}^{6}. 
\end{align}
This finishes the proof of \eqref{2.44}.

Similarly, if $A=B\circ A_\mu^{\theta_1} $
 we have
\begin{align*}
    &\sum_{|\nu-\mu|\le C\theta_1} \|\tilde \sigma_\la A^{\theta_0}_\nu e_\la\|^{6}_{L^{6}(M)}=\sum_{|\nu-\mu|\le C\theta_1}\|\tilde \sigma_\la A^{\theta_0}_\nu e_\la\|_{L^6}^2\cdot \|\tilde \sigma_\la A^{\theta_0}_\nu e_\la\|_{L^6}^{4}\\
    \lesssim&\sum_{|\nu-\mu|\le C\theta_1} \|\tilde \sigma_\la A^{\theta_0}_\nu e_\la\|_{L^6}^2\cdot \|A_\mu^{\theta_1} B  A^{\theta_0}_\nu\sigma_\la  e_\la\|_{L^6}^{4}+\sum_\nu \|\tilde \sigma_\la A^{\theta_0}_\nu e_\la\|_{L^6}^2\cdot \| (A_\mu^{\theta_1}B-B A_\mu^{\theta_1})  A^{\theta_0}_\nu\sigma_\la e_\la\|_{L^6}^{4}\\
    &+\sum_{|\nu-\mu|\le C\theta_1} \|\tilde \sigma_\la A^{\theta_0}_\nu e_\la\|_{L^6}^2\cdot \| (B A_\mu^{\theta_1} \sigma_\la A^{\theta_0}_\nu-B A_\mu^{\theta_1} A^{\theta_0}_\nu\sigma_\la) e_\la\|_{L^6}^{4}\\
    \lesssim& \sum_{|\nu-\mu|\le C\theta_1} \|\tilde \sigma_\la A^{\theta_0}_\nu e_\la\|_{L^6}^2\cdot \|   A_\mu^{\theta_1}B A^{\theta_0}_\nu\sigma_\la e_\la \|_{L^6}^{4}\\
    &\quad+\sum_{|\nu-\mu|\le C\theta_1} \la^{\frac1{3}}\|A^{\theta}_\nu e_\la\|_{L^2}^2\cdot (\la^{4(-\frac13+\frac43\delta_0)}+\la^{4(\frac16 - 1 + 2\delta_0)})\|e_\la\|_{L^2}^{4}\\
    \lesssim & \sum_{|\nu-\mu|\le C\theta_1} \|\tilde \sigma_\la A^{\theta_0}_\nu e_\la \|_{L^6}^2 \cdot \| B A^{\theta_0}_\nu\sigma_\la  e_\la \|_{L^6}^{4}+\la^{\frac13+4(-\frac13+\frac43\delta_0)}\Bigl(\sum_{|\nu-\mu|\le C\theta_1}\|A^{\theta}_\nu e_\la\|_{L^2}^2\Bigr)\|e_\la\|_{L^2}^{4},
\end{align*}
if $0<\delta_0<\tfrac12$.
In the second inequality, we used \eqref{commute} and the estimate
\begin{equation}\label{comu}
    \| (A_\mu^{\theta_1}B - B A_\mu^{\theta_1}) A^{\theta_0}_\nu \sigma_\la e_\la \|_{L^6}
    \lesssim \la^{\frac16 - 1 + 2\delta_0} \|e_\la\|_{L^2}.
\end{equation}
To justify this, note that for \(\theta_1 \ge \la^{-\delta_0}\), the symbol of the operator 
\((A_\mu^{\theta_1}B - B A_\mu^{\theta_1})\) is supported in the region 
\(p(x,\xi) \in [\la/2, 2\la]\) and belongs to a bounded subset of 
\(S^{-1+2\delta_0}_{1-\delta_0, \delta_0}(M)\). 
Consequently, one has
\[
\|(A_\mu^{\theta_1}B - B A_\mu^{\theta_1})\|_{L^6 \to L^6}
\lesssim \la^{-1 + 2\delta_0}.
\]
The bound \eqref{comu} then follows from \eqref{2.33} together with Sogge’s \(L^q\) spectral cluster estimates \cite{sogge881}.

If we repeat the arguments in \eqref{2}-\eqref{3}, we conclude that 
\begin{align}\label{3a}
    \sum_{|\nu-\mu|\le C\theta_1}  \|\tilde \sigma_\la A^{\theta_0}_\nu e_\la\|^{6}_{L^{6}(M)}
    \lesssim  \sum_{|\nu-\mu|\le C\theta_1} \| BA^{\theta_0}_\nu e_\la \|_{L^6}^{6}+\la^{-1+\frac{16}3\delta_0}(\sum_{|\nu-\mu|\le C\theta_1}\|A^{\theta}_\nu e_\la\|_{L^2}^2)\|e_\la\|_{L^2}^{4}.
\end{align}
This finishes the proof of \eqref{2.44a}.
\end{proof}

\subsection{Proof of Theorem \ref{thma1}}

The main step is to upgrade Proposition \ref{locprop} via iteration as follows.

\begin{theorem}\label{thmb}
    Suppose that there exists $0<\delta_\infty<\eta<\frac{1}{2}$ so that for all $\theta_0=\la^{-\delta_0}$ with $0<\delta_0<\frac12$ we have
    \begin{equation}\label{n3}
        \sup_\nu\|A_\nu^{\theta_0} e_\la\|_{L^\infty(M)}\lesssim_\e \la^{(\frac{1}{2}-\delta_\infty)(1-\delta_0)+\e}, 
    \end{equation}
  \begin{equation}\label{i3a}
        \sup_\nu\|A_\nu^{\theta_0} e_\la\|_{L^2(X)}\lesssim_\e \la^{-\eta\delta_0+\e},
    \end{equation}   as well as 
 \begin{equation}\label{n3a}
      \|e_\la\|_{L^\infty(M)}\lesssim_\e \la^{\frac{1}{2}-\delta_\infty+\e}.
    \end{equation}   
 Then we have
 \begin{equation}\label{n2}
       \|Be_\la\|_{L^6(M)}\lesssim \Bigl(\sum_\nu \| B A^{\theta_0}_\nu e_\la \|_{L^6(M)}^6\Bigr)^{\frac16}+\la^{\frac{1}{3}(\frac{1}{2}-\delta_\infty)+\e}
    \end{equation}
   holds for any $\theta_0=\la^{-\delta_0}$ with $0<\delta_0\leq\frac{3}{8}(1-{\delta_\infty})$, where the implied constant depends on $\e, \delta_0,\delta_\infty,\eta$ and the implied constants appearing in \eqref{n3}--\eqref{n3a}.
\end{theorem}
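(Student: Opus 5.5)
The plan is to iterate Proposition~\ref{locprop} over a finite increasing sequence of exponents
\[
0<\delta^{(1)}<\delta^{(2)}<\dots<\delta^{(K)}=\delta_0,\qquad \theta_k=\la^{-\delta^{(k)}}.
\]
At each stage the localization is refined from scale $\theta_k$ to scale $\theta_{k+1}$. The point is that the lossy broad term $\la^{\delta_0/6}\|e_\la\|_\infty^{1/3}$ in \eqref{2.44} is far too large if we pass to scale $\theta_0$ in a single step. If instead we pass between consecutive scales and use \eqref{n3}, \eqref{i3a} at the coarser scale, the loss $\la^{\delta^{(k+1)}/6}$ is compensated.

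\emph{First step.} Apply \eqref{2.44} with $\theta_0$ replaced by $\theta_1=\la^{-\delta^{(1)}}$, where $\delta^{(1)}=\e$. The broad term is then $\la^{\e/6+\e}\|e_\la\|_\infty^{1/3}\lesssim \la^{\frac13(\frac12-\delta_\infty)+2\e}$ by \eqref{n3a}. This gives
\[
\|Be_\la\|_6^6\lesssim \sum_\mu\|BA^{\theta_1}_\mu e_\la\|_6^6+\la^{2(\frac12-\delta_\infty)+C\e}.
\]

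\emph{Inductive step.} Assume we already have
\[
\|Be_\la\|_6^6\lesssim \sum_\mu\|BA^{\theta_k}_\mu e_\la\|_6^6+\la^{2(\frac12-\delta_\infty)+C\e}.
\]
For each $\mu$, apply \eqref{2.44a} with $(\theta_1,\theta_0)$ replaced by $(\theta_k,\theta_{k+1})$, raise it to the sixth power, and sum over $\mu$. Three contributions arise.

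The main terms $\sum_\mu\sum_{|\nu-\mu|\le C\theta_k}\|BA^{\theta_{k+1}}_\nu e_\la\|_6^6$ count each $\nu$ only $O(1)$ times, so they are bounded by $\sum_\nu\|BA^{\theta_{k+1}}_\nu e_\la\|_6^6$.

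The broad terms give
\[
\la^{\delta^{(k+1)}+\e}\sup_\mu\|A^{\theta_k}_\mu e_\la\|_\infty^2\cdot\sup_\mu\|\tilde A^{\theta_k}_\mu e_\la\|_2^2\cdot\sum_\mu\|\tilde A^{\theta_k}_\mu e_\la\|_2^2 .
\]
Here $\sum_\mu\|\tilde A^{\theta_k}_\mu e_\la\|_2^2\lesssim1$ by \eqref{2.33}. The factor $\|\tilde A^{\theta_k}_\mu e_\la\|_2^2\lesssim\la^{-2\eta\delta^{(k)}+\e}$ holds by \eqref{i3a}, since $\tilde A$ is a sum of $O(1)$ of the $A$'s. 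Finally \eqref{n3} controls the $L^\infty$ factor. Altogether the broad terms are bounded by
\[
\la^{\delta^{(k+1)}+2(\frac12-\delta_\infty)(1-\delta^{(k)})-2\eta\delta^{(k)}+C\e}.
\]
This is at most $\la^{2(\frac12-\delta_\infty)+C\e}$ provided
\[
\delta^{(k+1)}\le r\,\delta^{(k)},\qquad r:=1-2\delta_\infty+2\eta .
\]
The hypothesis $\eta>\delta_\infty$ gives $r>1$. Hence we may take $\delta^{(k+1)}=\min(r\delta^{(k)},\delta_0)$. Then $\delta_0$ is reached after $K\approx\log(\delta_0/\e)/\log r$ steps, and $K$ depends only on $\e,\delta_0,\delta_\infty,\eta$. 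Since each step multiplies the implied constant by a fixed amount, the final constant is still admissible.

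The commutator error terms sum, by \eqref{2.33}, to at most $\la^{-1+\frac{16}{3}\delta^{(k+1)}}$. This is $\le\la^{2(\frac12-\delta_\infty)}$ exactly when $-\frac16+\frac89\delta^{(k+1)}\le\frac13(\frac12-\delta_\infty)$, that is, when $\delta^{(k+1)}\le\frac38(1-\delta_\infty)$. This is guaranteed by the hypothesis on $\delta_0$, and it explains where that restriction comes from.

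After the $K$-th step we obtain \eqref{n2}, with $\e$ relabeled.

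The main obstacle is the bookkeeping in the summed version of \eqref{2.44a}. One must check that the $\theta_{k+1}$-pieces indexed by $\nu$ are counted only boundedly often across the $\mu$'s. One must also check that the error terms of the form $O(\la^{-N})$, and the replacement of $A^{\theta_k}_\mu$ by $\tilde A^{\theta_k}_\mu$, do not disturb the use of \eqref{n3} and \eqref{i3a}. Since $\tilde A$ is a finite sum of $A$'s, I expect these hypotheses to apply to it with the same exponents.
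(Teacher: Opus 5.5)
Your proposal is correct and follows essentially the same route as the paper. The base step applies \eqref{2.44} at a tiny exponent. Then comes a finite induction over scales: \eqref{2.44a} is summed over $\mu$, the broad term is controlled by \eqref{n3}, \eqref{i3a} and \eqref{2.33} at the coarser scale, and the admissible exponent grows by the factor $1+2(\eta-\delta_\infty)$ per step, with $\delta_0\le\frac38(1-\delta_\infty)$ used only to absorb the commutator term. The differences are cosmetic: you work with sixth powers and purely multiplicative steps starting from $\delta^{(1)}=\e$, whereas the paper keeps an additive slack $6(\e_0-\e)$ in each inductive step.
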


\begin{proof}
    For any fixed $\epsilon_0>0$, we want to show that
     \begin{equation}\label{n2'}
       \|Be_\la\|_{L^6(M)}\lesssim \Bigl(\sum_\nu \| B A^{\theta_0}_\nu e_\la \|_{L^6(M)}^6\Bigr)^{\frac16}+\la^{\frac{1}{3}(\frac{1}{2}-\delta_\infty)+\e_0}.
    \end{equation}
    We will prove by induction that \eqref{n2'} holds for $\theta_0=\la^{-\delta_0}$ with
   \begin{align*}
       0<\delta_0\leq \min\left\{\e_0\sum_{i=0}^n \left(1+2(\eta-\delta_\infty)\right)^i,\; \tfrac{3}8(1-{\delta_\infty})\right\}
   \end{align*}
   for any integer $n\geq0$. The implied constant in \eqref{n2'} wouldn't depend on $n$ because our assumption $0<\delta_\infty<\eta<\frac{1}{2}$ ensures that $1+2(\eta-\delta_\infty)>1$, so the induction will stop in finitely many steps.

    By \eqref{2.44} and \eqref{n3a}, we have 
\begin{equation*}
      \|B e_\la\|_{L^{6}(M)}\lesssim_\e \Bigl(\, \sum_\nu \|BA^{\theta_0}_\nu (e_\la)\|_{L^{6}(M)}^{6}\Bigr)^{1/6} + \la^{\frac{\delta_0}{6}+\frac{1}{3}(\frac{1}{2}-\delta_\infty)+\e}+\la^{-\frac16+\frac89\delta_0} .
\end{equation*}
Hence,  we have \eqref{n2'} holds with $0<\delta_0\le \e_0$, as long as we choose $\e<\frac12\e_0$, which is the base case $n=0$. Note that our assumption on the range for $\delta_0$ guarantees that the last term above is always dominated by the second term on the right side of \eqref{n2}.

Now we prove the induction step. Assume we have the induction hypothesis: for $\theta_1=\la^{-\delta_1}$, with
\[
0<\delta_1\le\e_0\sum_{i=0}^n \left(1+2(\eta-\delta_\infty)\right)^i\leq \tfrac{3}{8}(1-{\delta_\infty}),
\]
we have
\begin{equation*}
       \|Be_\la\|_{L^6(M)}\lesssim  \Bigl(\sum_\mu \| B A^{\theta_1}_\mu e_\la \|_{L^6(M)}^6\Bigr)^{\frac16}+\la^{\frac{1}{3}(\frac{1}{2}-\delta_\infty)+\e_0}.
\end{equation*}
By applying \eqref{2.44a} with $\theta_0=\la^{-\delta_0}\leq \theta_1=\la^{-\delta_1}$ to the first term on the right side above, we have 
\begin{equation}\label{2.44c}
\begin{aligned}
    \Bigl(\, \sum_\mu \|BA^{\theta_1}_\mu e_\la\|_{L^{6}(M)}^{6}\Bigr)^{1/6}&\lesssim_\e  \Bigl(\, \sum_\mu \sum_{|\nu-\mu|\le C\theta_1} \|B A^{\theta_0}_\nu e_\la\|_{L^{6}(M)}^{6}\Bigr)^{1/6}\\
    &\quad +  \la^{\frac{\delta_0}{6}+\e} \Bigl(\, \sum_\mu \|A^{\theta_1}_\mu e_\la\|^{2}_{L^\infty(M)}\|\tilde A^{\theta_1}_\mu e_\la\|^{4}_{L^2(M)}\Bigr)^{1/6} \\
    &\quad +  \la^{-\frac16+\frac89\delta_0} \Bigl(\, \sum_\mu \sum_{|\nu-\mu|\le C\theta_1}\|A^{\theta_0}_\nu e_\la\|_{L^2}^2\|e_\la\|_{L^2}^{4}\Bigr)^{1/6}.
\end{aligned}
\end{equation}
Recall that  $\mu\in\theta_1\cdot  {\mathbb Z}^{2}$ 
 index a $\la^{-\delta_1}$-separated set in
${\mathbb R}^{2}$, we have for each fixed $\nu$,  $|\{\mu: |\nu-\mu|\le C\theta_1\}|=O(1)$. Hence the first term on the right side is bounded by
\begin{equation}\label{2.44d}
\begin{aligned}
  \Bigl(\, \sum_\mu \sum_{|\nu-\mu|\le C\theta_1} \|B A^{\theta_0}_\nu e_\la\|_{L^{6}(M)}^{6}\Bigr)^{1/6}\lesssim  \Bigl(\, \sum_\nu  \|BA^{\theta_0}_\nu e_\la\|_{L^{6}(M)}^{6}\Bigr)^{1/6}.
\end{aligned}
\end{equation}
Similarly, by \eqref{2.33}, the third term can be bounded by 
\begin{equation}\label{2.44e}
\begin{aligned}
 \la^{-\frac16+\frac89\delta_0}\Bigl(\, \sum_\mu \sum_{|\nu-\mu|\le C\theta_1}\|A^{\theta_0}_\nu e_\la\|_{L^2}^2\|e_\la\|_{L^2}^{4}\Bigr)^{1/6} \lesssim  \la^{-\frac16+\frac89\delta_0}.
\end{aligned}
\end{equation}
Using \eqref{n3} for $A^{\theta_1}_\mu$, and \eqref{2.33} for $\tilde A^{\theta_1}_\mu$, the second term on the right side of \eqref{2.44c} is bounded by 
\begin{equation}\label{2.44f}
\begin{aligned}
 \la^{\frac{\delta_0}{6}+\e}& \Bigl(\, \sum_\mu \|A^{\theta_1}_\mu e_\la\|^{2}_{L^\infty(M)}\|\tilde A^{\theta_1}_\mu e_\la\|^{4}_{L^2(M)}\Bigr)^{1/6}\\
 &\lesssim  \la^{\frac{\delta_0}{6}+\frac{1}{3}(\frac{1}{2}-\delta_\infty)(1-\delta_1)+\e} \Bigl(\, \sum_\mu \|\tilde A^{\theta_1}_\mu e_\la\|^{2}_{L^2(M)}\Bigr)^{1/6} \big(\, \sup_\mu \|\tilde A^{\theta_1}_\mu e_\la\|^{\frac13}_{L^2(M)}\big)\\
 &\lesssim  \la^{\frac{\delta_0}{6}-\frac{\eta\delta_1}{3}+\frac{1}{3}(\frac{1}{2}-\delta_\infty)(1-\delta_1)+\e}.
\end{aligned}
\end{equation}
Note that, by the definition of $\tilde A^{\theta_1}_\mu$ in \eqref{aw}, it can be written as a finite sum of operators of the form $A^{\theta_1}_\mu$. Therefore, the last inequality above follows from \eqref{i3a}.

Still because of our assumption on $\delta_0$, the right hand side of \eqref{2.44e} is dominated by the right hand side of \eqref{2.44f}.
Thus, if we combine \eqref{2.44c}-\eqref{2.44f}, we have 
\begin{equation*}
\begin{aligned}
    \Bigl(\, \sum_\mu \|BA^{\theta_1}_\mu e_\la\|_{L^{6}(M)}^{6}\Bigr)^{1/6}&\lesssim  \Bigl(\, \sum_\nu  \|B A^{\theta_0}_\nu e_\la\|_{L^{6}(M)}^{6}\Bigr)^{1/6}+  \la^{\frac{\delta_0}{6}-\frac{\eta\delta_1}{3}+\frac{1}{3}(\frac{1}{2}-\delta_\infty)(1-\delta_1)+\e}.
\end{aligned}
\end{equation*}
Finally, using the induction hypothesis, we conclude that
\begin{equation*}
       \|Be_\la\|_{L^6(M)}\lesssim \Bigl(\, \sum_\nu  \|B A^{\theta_0}_\nu e_\la\|_{L^{6}(M)}^{6}\Bigr)^{1/6}+  \la^{\frac{\delta_0}{6}-\frac{\eta\delta_1}{3}+\frac{1}{3}(\frac{1}{2}-\delta_\infty)(1-\delta_1)+\e}+\la^{\frac{1}{3}(\frac{1}{2}-\delta_\infty)+\e_0}
\end{equation*}
so that \eqref{n2'} holds as long as $\frac{\delta_0}{6}-\frac{\eta\delta_1}{3}+\frac{1}{3}(\frac{1}{2}-\delta_\infty)(1-\delta_1)+\e\leq\frac{1}{3}(\frac{1}{2}-\delta_\infty)+\e_0$.
If we take
$$\delta_1=\e_0\sum_{i=0}^n \left(1+2(\eta-\delta_\infty)\right)^i,$$ 
then \eqref{n2'} holds with 
\[
\delta_0\leq \min\left\{\e_0\sum_{i=1}^{n+1}\left(1+2(\eta-\delta_\infty)\right)^i + 6(\e_0-\e), \;\tfrac{3}{8}(1-{\delta_\infty})\right\}.
\]
By choosing $0<\e<\e_0$ small enough such that $\e_0<6(\e_0-\e)$, we complete the induction step.
\end{proof}

\begin{proof}[Proof of Theorem \ref{thma1}]
By using the remark below \eqref{2.12} and applying Theorem~\ref{thmb}, it suffices to estimate 
\begin{equation*}
 \Bigl(\, \sum_\nu  \|B A^{\theta_0}_\nu e_\la\|_{L^{6}(M)}^{6}\Bigr)^{1/6}= \Bigl(\, \sum_\nu  \|BA^{\theta_0}_\nu \sigma_\la e_\la\|_{L^{6}(M)}^{6}\Bigr)^{1/6}
\end{equation*}
with $\theta_0=\la^{-\delta_0}$. If we use \eqref{commute} and repeat the arguments in \eqref{260}-\eqref{3}, it is not hard to show 
\begin{equation*}
 \Bigl(\, \sum_\nu  \|B A^{\theta_0}_\nu \sigma_\la e_\la\|_{L^{6}(M)}^{6}\Bigr)^{1/6} \lesssim  \Bigl(\, \sum_\nu  \|B\sigma_\la A^{\theta_0}_\nu  e_\la\|_{L^{6}(M)}^{6}\Bigr)^{1/6}+\la^{(-\frac16+\frac89\delta_0)}.
\end{equation*}
By \eqref{2.9}, the $L^q$ spectral cluster bounds of Sogge \cite{sogge881} along with \eqref{2.33}, we have 
\begin{equation*}
\Bigl(\, \sum_\nu  \|B\sigma_\la A^{\theta_0}_\nu  e_\la\|_{L^{6}(M)}^{6}\Bigr)^{1/6}\lesssim 
\la^{\frac16}\Bigl(\, \sum_\nu  \|A^{\theta_0}_\nu  e_\la\|_{L^{2}(M)}^{6}\Bigr)^{1/6}\lesssim \la^{\frac16}  \left(\sup_\nu\|A_\nu^{\theta_0}  e_\la\|_{L^2(X)}\right)^{\frac23}.
\end{equation*}
This completes the proof of Theorem \ref{thma1}.
\end{proof}

\section{Improved microlocal Kakeya-Nikodym estimates}\label{sec 3}
In this section, we prove Theorem \ref{thm: improved KN norm} for a Hecke–Maass form $\psi$ with spectral parameter $\lambda\gg1$, on a compact arithmetic congruence hyperbolic surface $X$. Assume that $\la_1=\sqrt{\frac14+\la^2}$ so that $\la_1\sim \la$. 
Then $\sigma_{\la_1}\psi=\psi$, and it suffices to estimate $\| A^{\theta_0}_\nu \sigma_{\la_1}  \psi\|_{L^{2}(X)}$ and $\| A^{\theta_0}_\nu \sigma_{\la_1}  \psi\|_{L^{\infty}(X)}$.

\subsection{Hecke-Maass forms}\label{sec: arith hyper sur}
We first review the definitions for $X$ and $\psi$.
\subsubsection{Quaternion algebras}
Let $D=\left(\frac{a,b}{\BQ}\right)$ be a quaternion division algebra over $\BQ$, where $a,b\in\BZ$ are squarefree and $a>0$. We choose a basis $1,\omega,\Omega,\omega\Omega$ for $D$ over $\BQ$ that satisfies $\omega^2=a$, $\Omega^2=b$ and $\omega\Omega+\Omega\omega=0$. We denote the reduced norm and trace on $D$ by $\mathrm{nrd}(\alpha)=\alpha\overline{\alpha}$ and $\mathrm{trd}(\alpha)=\alpha+\overline{\alpha}$, where $\alpha\mapsto \overline{\alpha}$ is the standard involution on $D$. We let $R$ be a maximal order in $D$. For any integer $m\geq1$, we let
\[
 R(m)=\{\alpha\in R\mid \mathrm{nrd}(\alpha)=m \}.
\]
The group $R(1)$ of elements of reduced norm 1 in $R$ acts on $R(m)$ by multiplication on the left, and $R(1)\backslash R(m)$ is finite (see \cite{Eic55}). We fix an embedding $\iota: D\to M_2(\BQ(\sqrt{a}))$, into the 2-by-2 matrices with entries in $\BQ(\sqrt{a})$, defined by
\begin{align}\label{eq: embedding of quaternion}
\iota(\alpha)=\begin{pmatrix}
   \xi&b\eta\\\overline{\eta}&\overline\xi
\end{pmatrix},
\end{align}
where \(\alpha=x_0+x_1\omega+(x_2+x_3\omega)\Omega=\xi+\eta\Omega.\) Let $\BH = \{z\in\BC\mid \Im(z)>0 \}$ be the upper half plane. The Lie group $\GL^+(2,\BR)$, consisting of 2-by-2 real matrices with positive determinant, acts on $\BH$ via the fractional linear transformation:
\begin{align*}
    g\cdot z=\frac{az+b}{cz+d},\qquad\text{ where }g=\begin{pmatrix}
        a&b\\c&d
    \end{pmatrix}\in \GL^+(2,\BR)\text{ and }z\in\BH.
\end{align*}
We define the lattice $\Gamma=\iota(R(1))\subset \SL(2,\BR)$, which is cocompact as we assumed $D$ to be a division algebra. We let $X=\Gamma\backslash\BH$ be the corresponding compact arithmetic hyperbolic surface, equipped with the standard volume form $y^{-2}dxdy$ for $x+iy\in\BH$. 

\subsubsection{Hecke operators}\label{sec: Hecke}
We define the Hecke operators $T_n:L^2(X)\to L^2(X)$, $n\geq1$, by
\[
T_nf(z)=\frac{1}{\sqrt{n}}\sum_{\alpha\in R(1)\backslash R(n)} f(\iota(\alpha)z).
\]
There is a positive integer $q$, depending on $R$, such that for any positive integers $m$ and $n$ so that $(m,q)=(n,q)=1$, $T_n$ has the following properties:
\begin{align}
    &T_n=T_n^*,\qquad\text{ that is, }T_n\text{ is self-adjoint},\notag\\
    &T_mT_n=T_nT_m=\sum_{d|(m,n)} T_{mn/d^2}.\label{eq: Hecke relation}
\end{align}

\subsubsection{Lie groups}
We let $G=\mathrm{PSL}(2,\BR)$ and let $K=\mathrm{PSO}(2)$, which is a maximal compact subgroup of $G$. Let $N=\left\{ \left(\begin{smallmatrix}
    1&*\\0&1
\end{smallmatrix}\right) \right\}$ be the unipotent subgroup of $G$, and $A$  the diagonal subgroup, with parameterizations
\[
n(x_1)=\begin{pmatrix}
    1&x_1\\0&1
\end{pmatrix},\qquad a(x_2)=\begin{pmatrix}
    e^{x_2}&0\\0&1
\end{pmatrix},\qquad x_1,x_2\in\BR.
\]
We denote the Lie algebra of $G$ by $\fg$ and equip $\fg$ with the norm
\begin{align*}
    \|\cdot\|:\begin{pmatrix}
        X_1&X_2\\X_3&-X_1
    \end{pmatrix}\mapsto \sqrt{X_1^2+X_2^2+X_3^2}.
\end{align*}
This norm defines a left-invariant metric on $G$, which we denote by $d$. 

\subsubsection{Hecke-Maass forms}

We let $\Delta_g$ be the Laplace-Beltrami operator on $\BH$ and $X$, which is induced from the standard hyperbolic Riemannian metric. Let $\psi\in L^2(X)$ be a Hecke-Maass form that is an eigenfunction of $\Delta_g$ and the operators $T_n$ with $(n,q)=1$. We let $\lambda(n)$ be the Hecke eigenvalues of $\psi$ and $\lambda$ be its spectral parameter, so that
\begin{equation}\label{def}
    \begin{aligned}
           &T_n\psi=\lambda(n)\psi,\\
    &\Delta_g\psi+(\frac{1}{4}+\lambda^2)\psi=0. 
    \end{aligned}
\end{equation}
We assume that $\| \psi \|_{L^2(X)}=1$ with respect to the hyperbolic volume on $X$ and assume that $\lambda\gg1$. Note that because $\Delta_g$ and $T_n$ with $(n,q)=1$ are self-adjoint, we may assume that $\psi$ is real-valued. For functions on $X$, we will also think of them as functions on $G$ that are left $\Gamma$-invariant and right $K$-invariant.

\subsection{Reducing the pseudo-differential operator}\label{sec: reducing}
We may first identify $X=\Gamma\backslash\mathbb{H}$ with a fundamental domain $\cD\subset \BH^2=\{n(x_1)a(x_2)\cdot i\mid(x_1,x_2)\in\BR^2\}$.
Recall that in Section \ref{sec: microlocal decomposition} the pseudo-differential operator $A^{\theta_0}_\nu$ is defined using a local coordinate chart $\Omega$. We define it explicitly on $X$ using the Iwasawa decomposition $G=NAK$ as follows. 
Let $r>0$ be a fixed constant, which may be taken to be small enough, and
\[
\tilde\Omega=\{x=(x_1,x_2):\, |x_1|,|x_2|<r\}\subset\BR^2.
\]
We may assume there exists an isometry $g_0\in G$ such that
\[
\Omega=\{g_0n(x_1)a(x_2)\cdot i:\,(x_1,x_2)\in\tilde\Omega \}
\]
is contained in the fundamental domain $\cD$, and the $x$-variable of $A^{\theta_0}_\nu$ is supported in
$$\{x=(x_1,x_2):\, |x_1|,|x_2|<0.1r\}\subset\tilde\Omega.$$
We denote the diffeomorphism from $\tilde\Omega$ onto $\Omega$ by $\kappa$ and 
\begin{align}\label{eq: diffeo kappa}
    \kappa(x)=\kappa((x_1,x_2))=g_0n(x_1)a(x_2)\cdot i.
\end{align}
By abuse of the notation, we still write the corresponding diffeomorphism on the cotangent bundles as $\kappa$. If we let $(x_0,\xi_0)\in S^*\tilde\Omega$, with $x_0=(0,0)$ and $\xi_0=(0,1)$, it may be seen that this local coordinate chart does satisfy the assumption \eqref{eq: assumption on local chart} under the hyperbolic metric (see \eqref{eq: hyperbolic metric under Iwasawa} below), so that the construction in Section \ref{sec: microlocal decomposition} can be applied here.
By choosing $g_0$ appropriately with $d(g_0,e)\lesssim1$, we may assume the center geodesic $\gamma_\nu \in S^*\Omega$ of $A^{\theta_0}_\nu$ (see the definition of $\gamma_\nu$ under \eqref{b.46}) is
$$ \kappa\big(\{(x,\xi)=((0,x_2), (0,1))\in S^*\tilde\Omega:\, |x_2|< 0.1r\}\big).$$
In the rest of the section, we treat the variables $(x,\xi)$ of $A^{\theta_0}_\nu$ as in $T^*\tilde\Omega=\tilde\Omega\times\BR^2$. By \eqref{b.48} and the support property of the symbol $A^{\theta_0}_\nu$ in \eqref{qnusymbol}, we have
\begin{equation}\label{3.1a}
    \begin{split}
        \supp A^{\theta_0}_\nu(x,\xi)\subset  \Big\{(x,\xi)\in T^*\tilde\Omega:\;&|x_1|\lesssim \la^{-\delta_0}, \,|x_2|<0.1r, \\ & p(x,\xi)\in (\frac{\la}{4}, 4\la),\, |\frac{\xi}{p(x,\xi)}-(0,1)|\lesssim \la^{-\delta_0}\Big\}.
    \end{split}
\end{equation}
Note that in the above coordinates the hyperbolic metric on $(x_1,x_2)\in \Omega_0$ is given by 
\begin{align}\label{eq: hyperbolic metric under Iwasawa}
    ds^2=e^{-2x_2}dx_1^2+dx_2^2.
\end{align}
Thus the principal symbol of $\sqrt{-\Delta_g}$ is $p(x,\xi)=\sqrt{e^{2x_2}\xi_1^2+\xi_2^2}$. Hence, by \eqref{3.1a}, in the support of $A^{\theta_0}_\nu(x,\xi)$, 
the component $\xi_2$ corresponds to the radial direction, while $\xi_1$ corresponds to the angular direction. More precisely, for $\la\gg1$,
\begin{equation}\label{3.1}
\supp A^{\theta_0}_\nu(x,\xi) \subset \Big\{(x,\xi)\in T^*\tilde\Omega:\,|x_1|\lesssim \la^{-\delta_0}, |x_2|< 0.1r, \, \xi_2\in (\frac{\la}{5}, 5\la), |\xi_1|\lesssim \la^{1-\delta_0}\Big\}.
\end{equation}
Moreover, by recalling \eqref{b.45a}, we also have
\begin{equation}\label{3.2}
\bigl|\partial_x^\sigma \partial_\xi^\gamma A^\theta_\nu(x,\xi)\bigr| \lesssim \la^{\delta_0|\sigma|-(1-\delta_0)|\gamma|}.
\end{equation}

Note that if $\la_1=\sqrt{\frac14+\la^2}$ with $\la\gg 1$, then $|\la_1-\la|\lesssim \la^{-1}$. We fix a nonnegative function $\chi\in C_0^\infty(\BR)$, so that $\chi=1$ on $(-0.2r,0,2r)$ and $\chi$ vanishes outside $(-0.3r,0.3r)$. Thus, we can use \eqref{2.2a} to see that the kernel $A^{\theta_0}_\nu \sigma_{\la_1} (\kappa(x),\kappa(y))$ of the operator $A^{\theta_0}_\nu \sigma_{\la_1} $ is, modulo $O_N(\la^{-N})$ terms,
\begin{align}\label{3.9}
\begin{split}
    \frac{\lambda^{1/2}}{(2\pi)^2}\int_{T^*\tilde\Omega} e^{i\langle x-z, \xi \rangle }A^{\theta_0}_\nu(x,\xi) e^{i\lambda d_g(\kappa(z),\kappa(y))} \chi(z_1)\chi(z_2)a(\kappa(z),\kappa(y),\lambda) dz d\xi.
    \end{split}
\end{align}
Here $x,y\in\tilde\Omega$, $d_g$ represents the distance function on $\BH$, and $a(\kappa(z),\kappa(y),\la)$ is a smooth function with the same properties as the one in \eqref{2.2a}. 
\begin{rmk}
    Here we insert the extra cutoff $\chi(z_1)\chi(z_2)$ due to the rapid decay off-diagonal estimate \eqref{ke} for the Schwartz kernel for $A_\nu^{\theta_0}$. Moreover, we have restricted the domain of the second variable $\kappa(y)$ in $\Omega$. This is due to the support condition \eqref{2.2a} on $a(\cdot,\cdot,\la)$ (we may choose $\delta$ in \eqref{2.2} to be sufficiently small).
\end{rmk}
Define $\sA$ to be the pseudo-differential operator with the compound symbol
\begin{align}\label{eq: symbol A}
    \begin{split}
        &\sA(x,y,\xi)=\chi\left(\frac{\la^{\delta_0}x_1}{C_1}\right)\chi\left(\frac{x_2}{2}\right)\chi\left(\frac{\la^{\delta_0}y_1}{C_1}\right)\chi(y_2)\chi\left(\frac{\la^{\delta_0-1}\xi_1}{C_1}\right)\chi\left(\frac{\la^{2\delta_0-1}(\xi_2-\la)}{C_1}\right),
    \end{split}
\end{align}
with $C_1$ a sufficiently large positive constant.
Given $\phi\in C^\infty(X)$, $\sA\phi$ is supported in $\Omega$ and is given by the integral
\begin{align}\label{eq: defn of operator A}
    \sA \phi (\kappa(x))=(2\pi)^{-2} \int_{T^*\tilde\Omega} e^{i\langle x-y, \xi \rangle }\sA(x,y,\xi) \phi(\kappa(y))  dyd\xi.
\end{align}
\begin{rmk}
    The operator $\sA$ is the analogue of a wave packet in Euclidean harmonic analysis. 
\end{rmk}
We will need the following lemma relating $A^{\theta_0}_\nu$ and $\sA$.
\begin{lemma}\label{lem: reduction of pdo}
    If $C_1\gg 1$ is chosen sufficiently large, we have, as operators between $L^p(X)\to L^q(X)$, for any $1\le p,q\le \infty$, 
    \begin{equation}\label{ao}
        A^{\theta_0}_\nu \sigma_{\la_1}= A^{\theta_0}_\nu  \sA \sigma_{\la_1}+O_N(\la^{-N}), \quad \forall \, N>0.
    \end{equation}
\end{lemma}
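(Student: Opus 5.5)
The plan is to show that $A^{\theta_0}_\nu(I-\sA)\sigma_{\la_1}=O_N(\la^{-N})$. We work with the kernel representation \eqref{3.9}. The insertion of the cutoff $\chi(z_1)\chi(z_2)$ is justified by \eqref{ke}, and we may restrict $\kappa(y)\in\Omega$. The compound symbol \eqref{eq: symbol A} is a product of six cutoffs. Up to $O_N(\la^{-N})$, the factors $\chi(x_2/2)$ and $\chi(y_2)$ act trivially, since the $z$-support has $|z_2|<0.3r$ and the relevant $y$ lie in $\Omega$. The remaining four factors localize to $|x_1|\lesssim C_1\la^{-\delta_0}$, $|y_1|\lesssim C_1\la^{-\delta_0}$, $|\xi_1|\lesssim C_1\la^{1-\delta_0}$ and $|\xi_2-\la|\lesssim C_1\la^{1-2\delta_0}$. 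We will check each localization separately, writing $I-\sA$ as a telescoping sum of terms in which one factor $1-\chi(\cdot)$ appears.

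The spatial localization in $x_1$ is the simplest step. The kernel of $A^{\theta_0}_\nu$ obeys \eqref{ke}, and its $x$-support lies in $|x_1|\lesssim\la^{-\delta_0}$ by \eqref{3.1}. Composing with a cutoff in $y_1$ at scale $C_1\la^{-\delta_0}$ therefore produces separation $|x_1-y_1|\gtrsim C_1\la^{-\delta_0}$ off the support, and \eqref{ke} then gives $O_N(\la^{-N})$.

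The frequency localization in $\xi_1$ and the $y_1$ localization both come from the microlocal support \eqref{3.1a} together with the invariance \eqref{b.46a} along the geodesic flow. The argument is the same propagation-of-singularities and nonstationary phase reasoning used for \eqref{c2a}, namely \cite[Theorem 4.3.5]{SoggeHangzhou}. Since $\sigma_{\la_1}$ only involves $e^{-itP}$ for $|t|\le2\delta$, the wave front set of $\sigma_{\la_1}^*(A^{\theta_0}_\nu)^*$ stays within the flowout of $\supp A^{\theta_0}_\nu$ over times $|t|\le 2\delta$. By \eqref{b.48} this flowout stays in a $C\la^{-\delta_0}$-neighborhood of $\gamma_\nu$, which is the vertical geodesic $x_1=0$, $\xi/|\xi|=(0,1)$. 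Consequently a symbol cutting off $|y_1|\ge C_1\la^{-\delta_0}$ or $|\xi_1|\ge C_1\la^{1-\delta_0}$ is disjoint from it once $C_1\gg C_0$. For the symbol class $S^0_{1-\delta_0,\delta_0}$ with $\delta_0<1/2$, integration by parts gains $\la^{-(1-2\delta_0)}$ per step. Concretely, in \eqref{3.9} we integrate by parts in $z$ using $\nabla_z\langle x-z,\xi\rangle+\la\nabla_z d_g(\kappa(z),\kappa(y))\neq0$ off the region where $\xi$ is parallel to the $z$-gradient of $\la d_g$.

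The main obstacle is the localization $|\xi_2-\la|\lesssim C_1\la^{1-2\delta_0}$. Here we must combine two facts. First, $\sigma_{\la_1}$ concentrates at $p(z,\xi)=\la+O(1)$; this is the $A_1$-type cutoff as in \eqref{c1a}/\eqref{c2aa}, which in fact holds at width $\la^{1-\delta_0}$ and even much finer. Second, on the support, $p(x,\xi)=\sqrt{e^{2x_2}\xi_1^2+\xi_2^2}$ with $|\xi_1|\lesssim\la^{1-\delta_0}$, so that
\[
\xi_2=p-\frac{e^{2x_2}\xi_1^2}{2p}+\cdots=\la+O(\la^{1-2\delta_0}).
\]
We would make this rigorous by first inserting a cutoff $\beta\bigl((p(z,\xi)-\la)/\la^{\e'}\bigr)$ via nonstationary phase in \eqref{3.9}. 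This is justified because $\sigma_{\la_1}$ has a kernel with phase $\la d_g$, and stationarity forces $\xi=\la\nabla_z d_g$, where $p(z,\nabla_z d_g)=1$ by the eikonal equation. On the resulting support, the expansion above shows that the factor $1-\chi\bigl(\la^{2\delta_0-1}(\xi_2-\la)/C_1\bigr)$ vanishes once $C_1$ is large. The remaining errors are symbol-calculus remainders in $S^{-N}$-type classes, and they are bounded as operators $L^p\to L^q$ by Young's inequality applied to kernels that are $O_N(\la^{-N})$ on the compact set.
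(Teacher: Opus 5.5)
\textbf{There is a genuine gap in the key step.} Your handling of the spatial factors and of the $\xi_1$ factor of $\sA$ is fine. The $\xi_1$ factor even follows directly, since composing $A^{\theta_0}_\nu$ on the right with a Fourier multiplier in the chart just multiplies symbols. You also correctly single out $\chi\bigl(\la^{2\delta_0-1}(\xi_2-\la)/C_1\bigr)$ as the crux, with the right heuristic $\xi_2=p+O(\la^{1-2\delta_0})$ on the support.

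The gap is the step meant to make this heuristic rigorous. Inserting $\beta\bigl((p(z,\xi)-\la)/\la^{\e'}\bigr)$ into \eqref{3.9} by nonstationary phase does not work. Write $\Phi(z)=-\langle z,\xi\rangle+\la d_g(\kappa(z),\kappa(y))$. Off that cutoff the eikonal equation does give $|\nabla_z\Phi|\gtrsim\la^{\e'}$. But $\nabla_z^2\Phi=\la\nabla_z^2 d_g$ has size $\la$ in the direction transverse to the geodesic from $\kappa(y)$. So each integration by parts costs a factor $\la^{1-2\e'}$ instead of gaining.

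Nor is $\beta((p-\la)/\la^{\e'})$ an admissible symbol: the product of its $x$- and $\xi$-derivative scales is a positive power of $\la$ (already $\la^{1-2\delta_0-2\e'}$ on the support of $A^{\theta_0}_\nu$). Hence no calculus makes your ``$S^{-N}$-type remainders'' small. For the same reason, the spectral identity $\sigma_{\la_1}=\beta((P-\la)/\la^{\e'})\sigma_{\la_1}+O(\la^{-N})$ does not transfer to the quantized symbol.

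What the isotropic argument actually gives is $|p-\la|\lesssim\la^{1/2+\e}$, and \eqref{c1a} only gives width $\la^{1-\delta_0}$. Either way you get $\xi_2=\la+O(\la^{1-2\delta_0})$ only when $\delta_0<\frac14$. The lemma is needed for $\delta_0\ge\frac14$ as well (Theorem \ref{thma} uses $\delta_0=\frac13$), and there the target width $\la^{1-2\delta_0}$ is at most $\la^{1/2}$.

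\textbf{The missing ingredient is anisotropic.} The steps are:
\begin{enumerate}
\item First localize both endpoints transversally, $|z_1|,|y_1|\lesssim C_1\la^{-\delta_0}$. For the input $y$ of $\sigma_{\la_1}$ this comes from integrating by parts in $z_1$, using $\partial_{z_1}d_g\sim z_1-y_1$ and $|\eta_1|\lesssim\la^{1-\delta_0}$.
\item Under this localization the distance is almost linear in the radial variable: $|\partial_{z_2}d_g|=1+O(\la^{-2\delta_0})$ and $\partial_{z_2}^n d_g=O(\la^{-2\delta_0})$ for $n\ge2$. The paper derives this from \eqref{ds} and Cauchy estimates in Lemma \ref{lem: der of dg}.
\item Then integrate by parts in $z_2$ alone. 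If $|\eta_2-\la|\ge C_1\la^{1-2\delta_0}$, the $z_2$-derivative of the phase is $\gtrsim C_1\la^{1-2\delta_0}$, while all higher $z_2$-derivatives are $O(\la^{1-2\delta_0})$. Each integration by parts therefore gains $\la^{-(1-2\delta_0)}$, which is exactly the scale built into $\sA$.
\end{enumerate}
An elliptic parametrix for $-\Delta_g-\la_1^2$ in the anisotropic class of $\sA$ should also work, for the parallel reason that $\partial_{x_2}(e^{2x_2}\xi_1^2)=O(\la^{2-2\delta_0})$ once $|\xi_1|\lesssim\la^{1-\delta_0}$. Some such input is indispensable. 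The eikonal equation and propagation of singularities over $|t|\le2\delta$ do not by themselves reach widths below $\la^{1/2}$.
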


\begin{proof}
By \eqref{3.9} and the Fourier and inverse Fourier transforms, the kernel $A^{\theta_0}_\nu \sigma_{\la_1} (\kappa(x),\kappa(y))$ is, modulo $O(\la^{-N})$ terms, 
\begin{equation*}
\begin{aligned}
     \frac{\lambda^{1/2}}{(2\pi)^4}\iint_{T^*\tilde\Omega} e^{i\langle x-w,\xi\rangle}A^{\theta_0}_\nu(x,\xi)e^{i\langle w-z, \eta \rangle } e^{i\lambda d_g(\kappa(z),\kappa(y))} &\chi(z_1)\chi(z_2)a(\kappa(z),\kappa(y),\lambda) dw d\xi dzd\eta  \\
     =\frac{\lambda^{1/2}}{(2\pi)^4}\iint_{T^*\tilde\Omega} e^{i\langle x-w,\xi\rangle}A^{\theta_0}_\nu(x,\xi)e^{i\langle w-z, \eta \rangle } e^{i\lambda d_g(\kappa(z),\kappa(y))} &\chi(w_1/2)\chi(w_2/2)\chi(z_1)\chi(z_2)\\
     &a(\kappa(z),\kappa(y),\lambda)  dwd\xi dzd\eta . 
\end{aligned}
\end{equation*}
Let us fix  $\tilde\rho\in C_0^\infty(1/7, 7)$ with $\tilde \rho=1$ on $(1/6, 6)$. Then, by integrating by parts in $w_1,w_2$ and use \eqref{3.1}, we have, modulo $O(\la^{-N})$ terms, $A^{\theta_0}_\nu \sigma_{\la_1} (\kappa(x),\kappa(y))$ is
\begin{multline*}
     \frac{\lambda^{1/2}}{(2\pi)^4}\iint_{T^*\tilde\Omega}  e^{i\langle x-w,\xi\rangle}A^{\theta_0}_\nu(x,\xi)e^{i\langle w-z, \eta \rangle }\chi\left(\frac{\la^{\delta_0-1}\eta_1}{C_1}\right)\tilde\rho(\eta_2/\la)\\
     \cdot e^{i\lambda d_g(\kappa(z),\kappa(y))} \chi(w_1/2)\chi(w_2/2)\chi(z_1)\chi(z_2)a(\kappa(z),\kappa(y),\lambda)  dw d\xi dzd\eta, 
\end{multline*}
for $C_1$ sufficiently large.
Next, by integrating by parts in $\xi_1$ and then in $\eta_1$ (we may integrate by parts in $\xi_1$ again to make the widths of the cutoffs for $z_1,w_1$ be the same), and using \eqref{3.2} and $0<\delta_0<\frac{1}{2}$, we have, modulo $O(\la^{-N})$ terms, 
\begin{equation}\label{3.18}
\begin{aligned}
         &\qquad A^{\theta_0}_\nu \sigma_{\la_1} (\kappa(x),\kappa(y)) \\
     &= \frac{\lambda^{1/2}}{(2\pi)^4}\iint_{T^*\tilde\Omega} e^{i\langle x-w,\xi\rangle}A^{\theta_0}_\nu(x,\xi)e^{i\langle w-z, \eta \rangle }\chi\left(\frac{\la^{\delta_0-1}\eta_1}{C_1}\right)\tilde\rho(\eta_2/\la)e^{i\lambda d_g(\kappa(z),\kappa(y))}\\
     &\qquad\qquad\cdot\chi\left(\frac{\la^{\delta_0}w_1}{C_1}\right) \chi(w_2/2) \chi\left(\frac{\la^{\delta_0}z_1}{C_1}\right)   \chi(z_2)a(\kappa(z),\kappa(y),\lambda)   dwd\xi dzd\eta,
\end{aligned}
\end{equation}
for $C_1$ sufficiently large.

To introduce the desired cutoff in $\eta_2$, we will need the following lemma.

\begin{lemma}\label{lem: der of dg}
    Given $z,y\in\tilde\Omega$ so that $a(\kappa(z),\kappa(y),\lambda)\neq0$, then
    \[
    |\partial_{z_1}d_g(\kappa(z),\kappa(y))|\sim |z_1-y_1|,
    \]
    and for any integer $n\ge2$
    \[
    |\partial^n_{z_1}d_g(\kappa(z),\kappa(y))|\lesssim_n 1 .
    \]
    If, furthermore, $z,y$ satisfy $|z_1|,|y_1|\lesssim\la^{-\delta_0}$, then
    $$\big|\partial_{z_2}d_g(z,y)\big |=1+O(\la^{-2\delta_0}),$$
    and for any integer $n\ge2$ we have
    \[
    \big|\partial^n_{z_2}d_g(z,y)\big |\lesssim_n\la^{-2\delta_0}.
    \]
\end{lemma}

\begin{proof}
    Recall the distance function on the hyperbolic plane $\BH$ is given by 
    \begin{equation}\label{ds}
        \cosh \Big(d_g(\kappa(z),\kappa(y))\Big)=\cosh(z_2-y_2)+\frac{(z_1-y_1)^2}{2e^{z_2+y_2}}.
    \end{equation}
    Taking $\partial_{z_1}$, we obtain
    \begin{align*}
        \sinh\Big(d_g(\kappa(z),\kappa(y))\Big)\partial_{z_1}d_g(\kappa(z),\kappa(y))=\frac{z_1-y_1}{e^{z_2}e^{y_2}}.
    \end{align*}
    The first derivative result of $z_1$ follows by using $d_g(\kappa(z),\kappa(y))\sim1$ when $a(\kappa(z),\kappa(y),\la)\neq 0$. The higher derivatives are bounded trivially.

    Now we apply $\partial_{z_2}$ to \eqref{ds} to get, with $|z_1|,|y_1|\lesssim\la^{-\delta_0}$,
    \begin{align*}
        \sinh\Big(d_g(\kappa(z),\kappa(y))\Big)\partial_{z_2}d_g(\kappa(z),\kappa(y))=\sinh(z_2-y_2)-\frac{(z_1-y_1)^2}{2e^{z_2+y_2}}.
    \end{align*}
    Using $|z_1|,|y_1|\lesssim\la^{-\delta_0}$ and $d_g(\kappa(z),\kappa(y))\sim1$, it follows from \eqref{ds} that $|z_2-y_2|\sim1$ and
    \[
    \sinh(d_g(\kappa(z),\kappa(y)))=\sinh(|z_2-y_2|)(1+O(\la^{-2\delta_0})),
    \]
    which proves the result for the first derivative. To bound the higher derivatives, we note that
    \begin{align*}
        \partial_{z_2}d_g(\kappa(z),\kappa(y))=\frac{\sinh(z_2-y_2)-\frac{(z_1-y_1)^2}{2e^{z_2+y_2}}}{\sqrt{\left(\cosh(z_2-y_2)+\frac{(z_1-y_1)^2}{2e^{z_2+y_2}}\right)^2-1}},
    \end{align*}
    and for any $w\in\BC$ with $|w|=O(1)$ the real part inside the square root is
    \begin{multline*}
        \Re\left(\left(\cosh(w-y_2)+\frac{(z_1-y_1)^2}{2e^{w+y_2}}\right)^2-1\right)=\Re\left(\sinh^2(w-y_2)+O(\lambda^{-2\delta_0})\right)\\
        =\frac{\cosh(2\Re(w-y_2))\cos(2\Im (w))-1}{2}+O(\lambda^{-2\delta_0}) = \sinh^2(\Re(w-y_2)) +O(\Im(w)^2)+O(\lambda^{-2\delta_0}).
    \end{multline*}
    Recall that we have $|z_2-y_2|\sim1$. Hence, for $C>0$ and for any $w\in\BC$ with $|w-z_2|\leq C$, we have
    \[
    \Re\left(\left(\cosh(w-y_2)+\frac{(z_1-y_1)^2}{2e^{w+y_2}}\right)^2-1\right)=\sinh^2(\Re(w-z_2)+(z_2-y_2))+O(C^2)+O(\lambda^{-2\delta_0}) \gg1,
    \]
    which holds when $C>0$ is chosen to be sufficiently small. Thus, 
    \[\frac{\sinh(w-y_2)-\frac{(z_1-y_1)^2}{2e^{w+y_2}}}{\sqrt{\left(\cosh(w-y_2)+\frac{(z_1-y_1)^2}{2e^{w+y_2}}\right)^2-1}}\]
    can be extended to be holomorphic, in $w$, on the ball of radius $C$ centered at $z_2$. By applying Cauchy's integral formula, we obtain, for any $n\geq1$,
    \begin{align*}
        \partial^{n+1}_{z_2}d_g(\kappa(z),\kappa(y))=\partial^n_{z_2} \left(\partial_{z_2}d_g(\kappa(z),\kappa(y))-\sgn (z_2-y_2))\right) \ll_{C,n} \lambda^{-2\delta_0},
    \end{align*}
    which completes the proof.
\end{proof}

Now we continue the proof of Lemma~\ref{lem: reduction of pdo}. 
If we further integrate \eqref{3.18} by parts in $z_1$, using $|\eta_1|\lesssim\la^{1-\delta_0}$ from \eqref{3.1}, $0<\delta_0<\frac{1}{2}$, and Lemma \ref{lem: der of dg} (and the trivial bounds for the higher derivatives), we have, modulo $O(\la^{-N})$ terms, $A^{\theta_0}_\nu \sigma_{\la_1} (\kappa(x),\kappa(y))$ is
\begin{equation}\label{3.19}
\begin{aligned}
       &\frac{\lambda^{1/2}}{(2\pi)^4}\iint_{T^*\tilde\Omega} e^{i\langle x-w,\xi\rangle}A^{\theta_0}_\nu(x,\xi)e^{i\langle w-z, \eta \rangle }\chi\left(\frac{\la^{\delta_0-1}\eta_1}{C_1}\right)\tilde\rho(\eta_2/\la) e^{i\lambda d_g(\kappa(z),\kappa(y))} \\
     &\qquad\quad\cdot \chi\left(\frac{\la^{\delta_0}w_1}{C_1}\right) \chi(w_2/2)\chi\left(\frac{\la^{\delta_0}z_1}{C_1}\right)\chi\left(\frac{\la^{\delta_0}y_1}{C_1}\right)   \chi(z_2)a(\kappa(z),\kappa(y),\lambda) dw d\xi dzd\eta, 
\end{aligned}
\end{equation}
for $C_1$ sufficiently large. 
Now integrating by parts in $z_2$ from \eqref{3.19}, and using Lemma \ref{lem: der of dg} and $0<\delta_0<\frac{1}{2}$, we have, modulo $O(\la^{-N})$ terms, $A^{\theta_0}_\nu \sigma_{\la_1} (\kappa(x),\kappa(y))$ is
\begin{equation}\label{3.20}
\begin{aligned}
     &\frac{\lambda^{1/2}}{(2\pi)^4}\iint_{T^*\tilde\Omega} e^{i\langle x-w,\xi\rangle}A^{\theta_0}_\nu(x,\xi)e^{i\langle w-z, \eta \rangle }\chi\left(\frac{\la^{\delta_0-1}\eta_1}{C_1}\right)\chi\left(\frac{\la^{2\delta_0-1}(\eta_2-\la)}{C_1}\right)e^{i\lambda d_g(\kappa(z),\kappa(y))} \\
     &\cdot \chi\left(\frac{\la^{\delta_0}w_1}{C_1}\right) \chi(w_2/2)\chi\left(\frac{\la^{\delta_0}z_1}{C_1}\right)   \chi(z_2)\chi\left(\frac{\la^{\delta_0}y_1}{C_1}\right)  a(\kappa(z),\kappa(y),\lambda) dw d\xi dzd\eta, 
\end{aligned}
\end{equation}
for $C_1$ sufficiently large. Here we have used that $$\tilde\rho(\eta_2/\la) \chi\left(\frac{\la^{2\delta_0-1}(\eta_2-\la)}{C_1}\right)= \chi\left(\frac{\la^{2\delta_0-1}(\eta_2-\la)}{C_1}\right)$$
when $\la\gg 1 $ is sufficiently large, which may depend on $C_1$. Note that \eqref{3.20} is equal to $$A^{\theta_0}_\nu  \sA \sigma_{\la_1}(\kappa(x),\kappa(y))\chi\left(\frac{\la^{\delta_0}y_1}{C_1}\right).$$ By another integration by part argument in $z_1$, the $\chi\left(\frac{\la^{\delta_0}y_1}{C_1}\right)$ factor can be removed at the expense of an $O(\la^{-N})$ error. This completes the proof of Lemma~\ref{lem: reduction of pdo}. 
\end{proof}

From Lemma \ref{lem: reduction of pdo}, we reduce the pseudo-differential operator $A_\nu^{\theta_0}$ to a simpler operator $\sA$.
\begin{lemma}\label{lem 3.10}
    We have $\| A_\nu^{\theta_0}\psi\|_{L^q(X)}\lesssim\| \sA\psi\|_{L^q(X)} + O_N(\la^{-N})$ for any $2\leq q\leq\infty$ and for any $N>0$. 
\end{lemma}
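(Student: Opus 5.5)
We begin with the identity $\sigma_{\la_1}\psi=\psi$ and apply Lemma~\ref{lem: reduction of pdo} to the function $\psi$ itself. This gives
\[
A_\nu^{\theta_0}\psi=A_\nu^{\theta_0}\sigma_{\la_1}\psi=A_\nu^{\theta_0}\sA\sigma_{\la_1}\psi+R_N\psi=A_\nu^{\theta_0}\sA\psi+R_N\psi,
\]
where $R_N$ has operator norm $O_N(\la^{-N})$ from $L^p(X)$ to $L^q(X)$ for all $1\le p,q\le\infty$. In particular, since $X$ is compact and $\|\psi\|_{L^2(X)}=1$, we have $\|R_N\psi\|_{L^q(X)}\lesssim_N \la^{-N}\|\psi\|_{L^2(X)}=O_N(\la^{-N})$. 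This disposes of the error term.

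For the main term we use the uniform $L^q$ boundedness of $A_\nu^{\theta_0}$. Estimate \eqref{2.36} (proved via the kernel bound \eqref{ke}) gives $\sup_x\int|A_\nu^{\theta_0}(x,y)|\,dy\lesssim1$. By the symmetric structure of the kernel bound \eqref{ke}, we likewise get $\sup_y\int|A_\nu^{\theta_0}(x,y)|\,dx\lesssim1$. Schur's test then yields $\|A_\nu^{\theta_0}\|_{L^q\to L^q}\lesssim1$ for all $1\le q\le\infty$, uniformly in $\nu$. Alternatively, one can use $q=\infty$ from \eqref{2.36} together with $q=2$ from the $S^0_{1-\delta_0,\delta_0}$ Calderón–Vaillancourt bound and interpolate for $2\le q\le\infty$. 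Either way, $\|A_\nu^{\theta_0}\sA\psi\|_{L^q(X)}\lesssim\|\sA\psi\|_{L^q(X)}$.

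Combining the two bounds gives the claim. The only point needing care is that the kernel of $A_\nu^{\theta_0}$ is computed in the chart $\tilde\Omega$ while the $L^q$ norms are taken on $X$. Since $\sA\psi$ is supported in $\Omega$ and the hyperbolic density $e^{-x_2}dx_1dx_2$ is comparable to Lebesgue measure on $\tilde\Omega$, the norms are comparable up to constants. This bookkeeping is routine.
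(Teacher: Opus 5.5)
Your proposal is correct and follows essentially the same route as the paper: insert $\sigma_{\la_1}\psi=\psi$, apply \eqref{ao} so the error is $O_N(\la^{-N})\|\psi\|_{L^2}$, then use the uniform bound $\|A_\nu^{\theta_0}\|_{L^q\to L^q}\lesssim1$. The paper gets that bound directly from \eqref{2.33}, which itself rests on the $q=2$ almost-orthogonality and the $q=\infty$ kernel estimate \eqref{2.36} that you invoke.
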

\begin{proof}
    By \eqref{ao} and the fact $\sigma_{\la_1}\psi=\psi$ we have
    \begin{multline*}
    \|A^{\theta_0}_\nu \psi\|_{L^q}=\|A^{\theta_0}_\nu \sigma_{\la_1}\psi\|_{L^q}    \leq \|A^{\theta_0}_\nu  \sA \sigma_{\la_1}\psi\|_{L^q}+O(\la^{-N}) \leq \|A^{\theta_0}_\nu\|_{L^q\to L^q}  \|\sA \psi\|_{L^q}+O(\la^{-N}),
    \end{multline*}
    which completes the proof by applying the bound $\|A^{\theta_0}_\nu\|_{L^q\to L^q}\lesssim 1$ due to \eqref{2.33}.
\end{proof}

We shall use the following useful facts about $\sA$. We write the $L^2$ pairing on $X$ as $\langle\cdot,\cdot\rangle_X$. We define another compound symbol $\sA^*(x,y,\xi)=e^{x_2-y_2} \sA(y,x,\xi)$, with the corresponding pseudo-differential operator $\sA^*$ defined in a similar way as \eqref{eq: defn of operator A}.

\begin{lemma}\label{adjoint of A}
    $\sA^*$ is the adjoint of $\sA$, i.e. $\langle\sA f,g\rangle_X=\langle f,\sA^* g\rangle_X$ for any $f,g\in L^2(X)$.
\end{lemma}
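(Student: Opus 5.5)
The identity $\langle\sA f,g\rangle_X=\langle f,\sA^* g\rangle_X$ will follow by writing both pairings as absolutely convergent integrals in the coordinates $\kappa$ and applying Fubini. The one ingredient to keep track of is the hyperbolic volume form in these coordinates. From \eqref{eq: hyperbolic metric under Iwasawa}, the Riemannian volume on $\Omega$ pulled back by $\kappa$ is $e^{-x_2}\,dx_1dx_2$. Since $g_0$ is an isometry, this also agrees with $y^{-2}dxdy$ transported by $n(x_1)a(x_2)$: the point $n(x_1)a(x_2)\cdot i = x_1+ie^{x_2}$ gives $y^{-2}dxdy = e^{-2x_2}\cdot e^{x_2}dx_1dx_2$.

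The plan is as follows.

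First, $\sA f$ is supported in $\Omega\subset\cD$, and the symbol restricts the integration variable $y$ to $\tilde\Omega$. Both pairings therefore reduce to integrals over $\tilde\Omega$, and
\[
\langle \sA f,g\rangle_X=(2\pi)^{-2}\int_{\tilde\Omega}\int_{\tilde\Omega}\int_{\BR^2} e^{i\langle x-y,\xi\rangle}\sA(x,y,\xi) f(\kappa(y))\overline{g(\kappa(x))}\,d\xi\, dy\, e^{-x_2}dx.
\]
The compound symbol is compactly supported in all of $(x,y,\xi)$, so the triple integral converges absolutely for $f,g\in L^2$. This is because $L^2\subset L^1$ on the bounded set $\tilde\Omega$ and the $\xi$-support has finite volume. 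Fubini then lets us integrate in $x$ first.

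Next, rewrite the weight as $e^{-x_2}=e^{-y_2}\cdot e^{y_2-x_2}$ and swap the names of $x$ and $y$. The expression becomes
\[
\int_{\tilde\Omega} f(\kappa(x))\,\overline{(2\pi)^{-2}\int\int e^{i\langle x-y,\xi\rangle}\,e^{x_2-y_2}\sA(y,x,\xi)\,g(\kappa(y))\,dy\,d\xi}\;e^{-x_2}dx .
\]
For the phase, after the swap we have $e^{i\langle y-x,\xi\rangle}$ on the $g$-side, and complex conjugation turns this into $e^{i\langle x-y,\xi\rangle}$. This works because $\sA$ is real: $\chi$ is real and nonnegative. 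No substitution $\xi\to-\xi$ is needed.

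The inner integral is exactly $\sA^*g(\kappa(x))$, with compound symbol $\sA^*(x,y,\xi)=e^{x_2-y_2}\sA(y,x,\xi)$. The outer integral is the pairing on $X$, again because $\sA^*g$ is supported in $\Omega$. This proves the claim.

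The only real subtlety is bookkeeping rather than analysis: getting the density factor right ($e^{-x_2}$, not $e^{-2x_2}$), and confirming that the factor $e^{x_2-y_2}$ appears with the correct sign once the variables are renamed.
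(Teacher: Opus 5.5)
Your proposal is correct and takes the same approach as the paper, whose proof just cites the support of $\sA f$ (and $\sA^* g$) in $\Omega$ and the volume density $e^{-x_2}\,dx_1dx_2$. Your version also supplies details the paper leaves implicit: the Fubini justification, the $x\leftrightarrow y$ bookkeeping that produces the factor $e^{x_2-y_2}$, and the observation that $\sA$ is real-valued, which is needed for the symbol $e^{x_2-y_2}\sA(y,x,\xi)$ to come out without a complex conjugate.
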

\begin{proof}
    This follows from the facts that $\sA f$ is supported in $\Omega$, and the volume form for $\kappa(x)\in\Omega$ is $e^{-x_2}dx_1 dx_2$ under our coordinates.
\end{proof}
 
\begin{lemma}\label{ab}
We have $\|\sA\|_{L^p\to L^p}\lesssim 1$ and $\|\sA^*\|_{L^p\to L^p}\lesssim 1$ for any $1\le p\le\infty$. 
\end{lemma}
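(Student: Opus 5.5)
The plan is to bound the Schwartz kernel of $\sA$ pointwise by an integrable function and then apply Young's (Schur's) inequality. The bound for $\sA^*$ will follow from the same estimate, since its compound symbol $e^{x_2-y_2}\sA(y,x,\xi)$ has the same structure. Because $\sA f$ is supported in $\Omega$ and the $y$-cutoff $\chi(\la^{\delta_0}y_1/C_1)\chi(y_2)$ localizes the input to $\Omega$, everything can be computed in the coordinates $(x_1,x_2)\in\tilde\Omega$. In these coordinates the volume form is $e^{-x_2}dx$ with $e^{\pm x_2}\sim1$, so $L^p(X)$ norms of such functions are comparable to Euclidean $L^p(\tilde\Omega)$ norms. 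The kernel of $\sA$ in these coordinates is
\[
K(x,y)=(2\pi)^{-2}\chi\Bigl(\tfrac{\la^{\delta_0}x_1}{C_1}\Bigr)\chi\Bigl(\tfrac{x_2}{2}\Bigr)\chi\Bigl(\tfrac{\la^{\delta_0}y_1}{C_1}\Bigr)\chi(y_2)\int e^{i\langle x-y,\xi\rangle}\chi\Bigl(\tfrac{\la^{\delta_0-1}\xi_1}{C_1}\Bigr)\chi\Bigl(\tfrac{\la^{2\delta_0-1}(\xi_2-\la)}{C_1}\Bigr)d\xi .
\]

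The key observation is that the $\xi$-part of the symbol does not depend on $(x,y)$ and is a tensor product. The $\xi$-integral therefore factors as the product of two one-dimensional inverse Fourier transforms of rescaled $\chi$:
\[
\la^{1-\delta_0}\check\chi\bigl(\la^{1-\delta_0}(x_1-y_1)/C_1\bigr)\cdot \la^{1-2\delta_0}e^{i\la(x_2-y_2)}\check\chi\bigl(\la^{1-2\delta_0}(x_2-y_2)/C_1\bigr),
\]
up to constant factors of $C_1$ and $2\pi$. Since $\chi$ is Schwartz, this gives
\[
|K(x,y)|\lesssim_N \la^{2-3\delta_0}(1+\la^{1-\delta_0}|x_1-y_1|)^{-N}(1+\la^{1-2\delta_0}|x_2-y_2|)^{-N}.
\]
Here the spatial cutoffs are bounded by $1$, and the modulation $e^{i\la(x_2-y_2)}$ disappears on taking absolute values. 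Integrating this bound in either variable gives
\[
\sup_x\int|K(x,y)|\,dy+\sup_y\int|K(x,y)|\,dx\lesssim 1,
\]
uniformly in $\la$. The factor $e^{-y_2}$ from the volume form, and the factor $e^{x_2-y_2}$ in the symbol of $\sA^*$, are both $O(1)$ on the support.

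Schur's test then gives boundedness on $L^1$ and $L^\infty$, and interpolation (or Young directly) gives all $1\le p\le\infty$, with the same argument applying to $\sA^*$. There is no real obstacle: unlike the estimate \eqref{ke} for $A^{\theta_0}_\nu$, no integration by parts against an $x$-dependent symbol is needed, since the frequency cutoff is exactly a tensor product. The only point requiring a little care is checking that the $x_2$-frequency window has width $\la^{1-2\delta_0}$ and its normalization $\la^{1-2\delta_0}$ matches the decay scale. This makes the $L^1$ norm of the kernel in each variable $O(1)$, independent of $\delta_0$ and $C_1$ up to constants.
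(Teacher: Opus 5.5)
Your proposal is correct and is essentially the paper's argument. The paper also bounds the Schwartz kernel of $\sA$ (and of $\sA^*$) by $\la^{2-3\delta_0}(1+\la^{1-\delta_0}|x_1-y_1|)^{-N}(1+\la^{1-2\delta_0}|x_2-y_2|)^{-N}$ and concludes by Young's inequality; its "repeated integration by parts" is the same as your explicit tensor-product Fourier computation. One cosmetic slip, immaterial since $C_1$ is fixed: the dilations should read $\check\chi(C_1\la^{1-\delta_0}(x_1-y_1))$ and $\check\chi(C_1\la^{1-2\delta_0}(x_2-y_2))$, not with $C_1$ in the denominator.
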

\begin{proof}
Let $\sA(x,y)$ denote the Schwartz kernel of  $\sA$, with $x,y\in\tilde\Omega$. Then, 
by \eqref{eq: symbol A} and repeated integration by parts, we have
\begin{align}\label{eq: Schwartz kernel for sA}
    \sA(x,y)=
O\bigl(\la^{2-3\delta_0} (1+\la^{1-\delta_0}|x_1-y_1|)^{-N}(1+\la^{1-2\delta_0}|x_2-y_2|)^{-N})\bigr),
\end{align}
and the same holds for $\sA^*$.
Consequently, Lemma~\ref{ab} follows from Young’s inequality.
\end{proof}

\subsection{Pretrace formula}
We fix a real-valued function $h\in C^\infty(\BR)$ of Paley-Wiener type that is nonnegative and satisfies $h(0)=1$. Define $h_\lambda^0$ by $h_\lambda^0(s)=h(s-\lambda)+h(-s-\lambda)$, and let $k_\lambda^0$ be the $K$-bi-invariant function on $\BH$ with Harish-Chandra transform $h_\lambda^0$ (see e.g. \cite[Chap.\,4]{Hel84}). The Paley–Wiener theorem of Gangolli \cite{Gan71} implies that $k_\lambda^0$ is of compact support that may be chosen arbitrarily small. Define $k_\lambda=k_\lambda^0*k_\lambda^0$, which has Harish-Chandra transform $h_\lambda=(h_\lambda^0)^2$.
Let $\{\psi_j\}$ be an orthonormal basis for $L^2(X)$ consisting of Hecke–Maass
forms with $\psi\in\{\psi_j\}$. Let $\la_j$ be the spectral parameter of $\psi_j$. Then we have the pretrace formula by Selberg \cite{Sel56}
\[
K_\la(x,y):=\sum_{\gamma\in\Gamma} k_\la(x^{-1}\gamma y)=\sum_{j} h_\la(\la_j)\psi_j(x)\overline{\psi_j(y)},\qquad \text{ for }x,y\in X.
\]
Let $N\geq1$ be an integer, and let $\{\alpha_n\}_{n=1}^N$ be complex numbers so that $\alpha_n=0$ if $(n,q)\neq1$, both to be chosen later. Define the Hecke operator $\cT = \sum_{n=1}^N \alpha_n T_n$. Recall that after applying $\cT$ to the kernal $K_\la(x,y)$ we have the following amplified pretrace formula in \cite{IS95}:
     \begin{align}\label{eq: amplified pretrace}
         \sum_j h_\la(\lambda_j) \cT\psi_j(x) \overline{\cT\psi_j(y)} =\sum_{m,n\leq N} \alpha_m  \overline\alpha_n\sum_{d|(m,n)} \frac{d}{\sqrt{mn}}\sum_{\gamma\in R(mn/d^2)} k_\la(x^{-1}\gamma y).
     \end{align}

\subsection{Proof of (\ref{i3})}
Given $g\in G$ and $\phi\in C^\infty(X)$, using the notation from Section \ref{sec: reducing}, we define
\[
    I_\sA(\la,\phi,g)=\iint_{\tilde\Omega} \overline{\sA^*\phi}(\kappa(x)) \sA^*\phi(\kappa(y)) k_\la(\kappa(x)^{-1}g\kappa(y))e^{-x_2-y_2} dxdy.
\]
Here $x=(x_1,x_2)$, $dx=dx_1dx_2$, and the factor $e^{-x_2-y_2}$ is from the hyperbolic volume form.

\begin{proposition}\label{prop: amplification inequality}
    For any $\phi\in C^\infty(X)$, we have
    \[
    \left|\langle \sA\cT\psi,\phi\rangle_X\right|^2\leq\sum_{m,n\leq N} |\alpha_m  \alpha_n | \sum_{d|(m,n)} \frac{d}{\sqrt{mn}}\sum_{\gamma\in R(mn/d^2)} |I_\sA(\la,\phi,\gamma)|
    \]
\end{proposition}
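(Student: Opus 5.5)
The plan is the standard positivity argument behind arithmetic amplification: integrate the amplified pretrace formula \eqref{eq: amplified pretrace} against $\overline{\sA^*\phi}(x)\,\sA^*\phi(y)$ over $X\times X$, and compare the spectral side with the geometric side.

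\emph{Spectral side.} The function $\sA^*\phi$ is supported in $\Omega\subset\cD$, so all integrals over $X$ may be computed on $\Omega$ in the coordinates $\kappa$, with volume form $e^{-x_2}dx_1dx_2$. Integrating the left-hand side of \eqref{eq: amplified pretrace} term by term gives
\[
\sum_j h_\la(\la_j)\Bigl(\int_X \cT\psi_j\,\overline{\sA^*\phi}\Bigr)\overline{\Bigl(\int_X \cT\psi_j\,\overline{\sA^*\phi}\Bigr)}=\sum_j h_\la(\la_j)\,\bigl|\langle \cT\psi_j,\sA^*\phi\rangle_X\bigr|^2 .
\]
Interchanging the sum and the integrals is harmless: $\sA^*\phi$ is smooth and compactly supported, and $h_\la$ decays rapidly. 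Every term is nonnegative, since $h_\la=(h^0_\la)^2$ and $h^0_\la(\la_j)$ is real.
\begin{itemize}
\item For real $\la_j$ this is clear because $h$ is real-valued.
\item For the finitely many exceptional $\la_j\in i\BR$, it follows from $h(\bar z)=\overline{h(z)}$, which gives $h(it-\la)+h(-it-\la)\in\BR$.
\end{itemize}
So we may drop every $j$ except $\psi_j=\psi$. For that term, $h^0_\la(\la)=h(0)+h(-2\la)\ge 1$ because $h\ge0$, hence $h_\la(\la)\ge1$. Finally, Lemma \ref{adjoint of A} gives $\langle\cT\psi,\sA^*\phi\rangle_X=\langle\sA\cT\psi,\phi\rangle_X$, so the spectral side is at least $|\langle \sA\cT\psi,\phi\rangle_X|^2$.

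\emph{Geometric side.} Integrating the right-hand side of \eqref{eq: amplified pretrace} against the same weight, each term becomes
\[
\iint_{\tilde\Omega}\overline{\sA^*\phi}(\kappa(x))\,\sA^*\phi(\kappa(y))\,k_\la(\kappa(x)^{-1}\iota(\gamma)\kappa(y))\,e^{-x_2-y_2}\,dx\,dy=I_\sA(\la,\phi,\gamma).
\]
Here we regard $\kappa(x)=g_0n(x_1)a(x_2)$ as an element of $G$ representing the point $\kappa(x)\cdot i$. This is legitimate because $k_\la$ is $K$-bi-invariant, so the value does not depend on the choice of representative. The sum over $\gamma\in R(mn/d^2)$ is finite on the compact support, since $k_\la$ has compact support, so it commutes with the integration. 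Applying the triangle inequality, $|\alpha_m\overline{\alpha_n}|=|\alpha_m\alpha_n|$, and $d/\sqrt{mn}>0$ then yields the stated bound.

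The only step needing real care is the spectral-side positivity, namely the nonnegativity of $h_\la(\la_j)$ for possible exceptional spectral parameters. That is handled by writing $h_\la$ as a square of a function that is real on both $\BR$ and $i\BR$. Everything else is bookkeeping with the support of $\sA^*\phi$ inside the fundamental domain.
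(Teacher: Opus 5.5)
Your proposal is correct and follows the same route as the paper's proof: integrate the amplified pretrace formula against $\overline{\sA^*\phi}(x)\,\sA^*\phi(y)$, identify each geometric term with $I_\sA(\la,\phi,\gamma)$, use $h_\la(\la_j)\ge 0$ and $h_\la(\la)\ge 1$ to drop every spectral term except $\psi$, and finish with the adjoint identity from Lemma~\ref{adjoint of A}. Your check that $h_\la(\la_j)\ge 0$ also holds at exceptional (imaginary) spectral parameters, via Schwarz reflection, supplies a detail that the paper only asserts.
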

\begin{proof}
    We first write \eqref{eq: amplified pretrace} with variables $\kappa(x),\kappa(y)\in\Omega$ to obtain
    \begin{align*}
         \sum_j h_\la(\lambda_j) \cT\psi_j(\kappa(x)) \overline{\cT\psi_j(\kappa(y))}=\sum_{m,n\leq N} \alpha_m  \overline\alpha_n\sum_{d|(m,n)} \frac{d}{\sqrt{mn}}\sum_{\gamma\in R(mn/d^2)} k_\la(\kappa(x)^{-1}\gamma \kappa(y)).
     \end{align*}
     If we integrate the above identity against $\overline{\sA^*\phi}\times\sA^*\phi$ with respect to the volume form, we obtain
    \begin{align*}
         \sum_j h_\la(\lambda_j) \left|\langle \cT\psi_j,\sA^*\phi\rangle_X\right|^2=\sum_{m,n\leq N} \alpha_m  \overline\alpha_n\sum_{d|(m,n)} \frac{d}{\sqrt{mn}}\sum_{\gamma\in R(mn/d^2)} I_\sA(\la,\phi,\gamma).
     \end{align*}
     Here we have used the fact that $\supp\sA^*\phi\subset\Omega$.
     Note that $h_\la(\lambda)\geq 1$ and $h_\la(\lambda_j)\geq 0$ for all $j$. We may therefore drop all terms on the left-hand side except $\psi$. Finally applying the adjointness between $\sA$ and $\sA^*$ from Lemma \ref{adjoint of A} completes the proof.
\end{proof}

To apply the amplification inequality, we shall need the following bounds for $I_\sA(\lambda,\phi,\gamma)$.

\begin{proposition}\label{prop: bound for I_A}
    Let $\phi\in C^\infty(X)$ be such that $\|\phi\|_{L^{2}(X)}=1$. 
     \begin{enumerate}
         \item For any $\gamma\in G$, $I_\sA(\lambda,\phi,\gamma)\lesssim1$.
         \item Fix $0<\epsilon_0\ll\delta_0$. If $d(\gamma,e)\lesssim 1$ and $d(g_0^{-1}\gamma g_0,A)\geq \lambda^{-\delta_0+2\epsilon_0}$, then $I_\sA(\lambda,\phi,\gamma)\lesssim_{\epsilon_0,N} \lambda^{-N}$ for any $N>0$.
     \end{enumerate}
\end{proposition}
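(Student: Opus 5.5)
The plan is to view $I_\sA(\la,\phi,\gamma)$ as a matrix coefficient,
\[
I_\sA(\la,\phi,\gamma)=\langle \mathcal{K}_\la\, \tau_\gamma f, f\rangle_{L^2(\BH)},\qquad f=\sA^*\phi\ \text{(extended by zero to } \BH\text{)}.
\]
Here $\tau_\gamma$ denotes left translation by $\gamma$ and $\mathcal{K}_\la$ is convolution with the $K$-bi-invariant kernel $k_\la$. Note that $\mathcal{K}_\la=h_\la(\sqrt{-\Delta_g-1/4})$ is a function of the Laplacian.

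\textbf{Part (1).} Since $h_\la=(h^0_\la)^2$ is bounded uniformly in $\la$, the spectral theorem gives $\|\mathcal{K}_\la\|_{L^2\to L^2}\lesssim 1$. Since $\tau_\gamma$ is unitary, Cauchy--Schwarz gives $|I_\sA|\lesssim \|f\|_{L^2}^2$. Lemma~\ref{ab} then gives $\|f\|_{L^2}\lesssim\|\phi\|_{L^2}=1$, which proves (1).

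\textbf{Part (2).} Here I would exhibit the rapid decay by nonstationary phase.

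\emph{Phase-space location of $f$.} Expand $f=\sA^*\phi$ through its compound symbol $\sA^*(x,w,\xi)=e^{x_2-w_2}\sA(w,x,\xi)$. This shows that $f$ is a sum of "wave packets": spatially in $|x_1|\lesssim\la^{-\delta_0}$, $|x_2|\lesssim r$, with frequencies $|\xi_1|\lesssim\la^{1-\delta_0}$ and $|\xi_2-\la|\lesssim\la^{1-2\delta_0}$. In other words, $f$ is microlocalized in a $\la^{-\delta_0}$-neighbourhood of the unit covectors tangent to the oriented geodesic $g_0A\cdot i$. The estimate \eqref{eq: Schwartz kernel for sA} quantifies this.

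\emph{Kernel of $\mathcal{K}_\la$.} Write $k_\la=k^0_\la*k^0_\la$ and use the standard oscillatory representation of $k^0_\la$ (Harish-Chandra/stationary phase, as in \eqref{2.2a}): $k^0_\la(g)=\la^{1/2}\sum_\pm e^{\pm i\la d_g(i,g i)}a_\pm(g,\la)$ plus $O(\la^{-N})$ on the relevant range, where the amplitudes have bounded derivatives. After unfolding, $I_\sA$ becomes an integral over $x,y,w,w'$ and the frequency variables of $\phi(w)\overline{\phi(w')}$ times an oscillatory amplitude. The phase is
\[
\langle y-w,\eta\rangle-\langle x-w',\xi\rangle\pm\la\, d_g\bigl(\kappa(x),\gamma\kappa(y)\bigr).
\]
Stationarity in $x$ and $y$ forces $\xi$ to be the covector at $\kappa(x)$ of the geodesic joining $\kappa(x)$ to $\gamma\kappa(y)$, and $\eta$ to be the $\gamma$-pullback of the covector of the same geodesic at $\gamma\kappa(y)$, both up to the $\la^{-\delta_0}$ uncertainty.

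\emph{Geometric step.} I then need to show the following. If $d(g_0^{-1}\gamma g_0,A)\ge\la^{-\delta_0+2\epsilon_0}$ and $d(\gamma,e)\lesssim1$, then $\gamma$ maps the oriented unit-length piece of $g_0A\cdot i$ to a geodesic segment whose position or direction differs from the original by $\gtrsim\la^{-\delta_0+2\epsilon_0}$, uniformly along the segment. The point is that the stabilizer in $G$ of the oriented geodesic $A\cdot i$ is exactly $A$. The orientation-reversing element is excluded because $\xi_2\approx+\la$ and $\mathcal{K}_\la$ commutes with the geodesic flow, so reversed covectors fall outside the support. Hence, by a compactness/linearization argument near $A$, the displacement of the lifted segment in $S^*\BH$ is comparable to the distance to $A$.

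\emph{Integration by parts.} The gradient of the phase in $x_1$ (or $y_1$), or in the angular frequency variable, is then $\gtrsim\la\cdot\la^{-\delta_0+2\epsilon_0}$. Each derivative of the amplitude costs at most $\la^{\delta_0}$ in $x_1$, and $\la^{-(1-\delta_0)}$ in $\xi_1$ by \eqref{3.2}. The derivatives of $d_g$ are controlled by Lemma~\ref{lem: der of dg}. So each integration by parts gains $\la^{-\epsilon_0}$ (or more), and repeating gives $O_N(\la^{-N})$ with a bound depending only on $\|\phi\|_{L^1}\lesssim\|\phi\|_{L^2}$.

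\emph{Main obstacle.} I expect the hardest step to be making the geometric transversality quantitative with the correct scaling. One must ensure the mismatch is detected in a single direction along which the amplitude is only mildly oscillatory, since the radial frequency localization at scale $\la^{1-2\delta_0}$ is weaker. I would first try to work in Fermi coordinates around $g_0A\cdot i$ and split into two cases:
\begin{itemize}
\item $\gamma$ displaces the geodesic by a transverse shift, handled by integrating by parts in $x_1$, $y_1$;
\item $\gamma$ tilts the geodesic, handled by integrating by parts in $\xi_1$, $\eta_1$.
\end{itemize}
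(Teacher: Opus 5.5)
Your part (1) is exactly the paper's argument. For part (2) your microlocal picture is sound, and the geometric step you flag as the main obstacle is actually easy. Identify $S^*\mathbb{H}$ with $G$ so that the lifted segment is $\{g_0a(t)\}$ and the geodesic flow is right multiplication by $A$. A coincidence $\gamma g_0a(t)e^{Y}=g_0a(t')e^{Y'}a(s)$ with $t,t',s$ bounded and $|Y|,|Y'|\lesssim\lambda^{-\delta_0}$ then forces $d(g_0^{-1}\gamma g_0,A)\lesssim\lambda^{-\delta_0}$, and orientation reversal is excluded automatically.

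The genuine gap is in the analytic step. You treat $k_\lambda$ like the kernel of $\sigma_\lambda$ in \eqref{2.2a}, with phase $\pm\lambda d_g$ and amplitudes with bounded derivatives, and you invoke Lemma~\ref{lem: der of dg}. But that lemma, and that form of the kernel, hold only where $d_g\sim 1$; in \eqref{2.2a} this comes from $\operatorname{supp}\hat\rho$ avoiding $0$. Here $h$ is Paley--Wiener with $h(0)=1$, so $k_\lambda$ is supported in a small ball \emph{containing} the identity and has a peak of size $\lambda$ there. Any representation $\lambda^{1/2}e^{\pm i\lambda d_g}a_\pm$ has $a_\pm\sim d_g^{-1/2}$ and $\partial^k a_\pm\sim d_g^{-1/2-k}$. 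Since $\nabla_x^2 d_g$ is also of size $d_g^{-1}$, integrating by parts in $x$ or $y$ against a phase gradient of size $\lambda^{1-\delta_0+2\epsilon_0}$ gains only when $d_g(\kappa(x),\gamma\kappa(y))\gg\lambda^{-1+2\delta_0-4\epsilon_0}$.

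That region cannot be discarded. Take $\gamma$ a rotation by angle $\lambda^{-\delta_0+2\epsilon_0}$ about a point of the segment: pairs with $\kappa(x)\approx\gamma\kappa(y)$ lie in the support, and no size bound there yields $O(\lambda^{-N})$. Your fallback of integrating by parts in $\xi_1,\eta_1$ does not help either. The $\xi_1$-derivative of your phase is $w_1'-x_1$, which only reflects the kernel localization of $\sA^*$, not a tilt of the geodesic. To close the gap you need a representation of $\mathcal{K}_\lambda=h_\lambda(P)$ that is uniform up to the diagonal. Options include Harish-Chandra's integral formula for spherical functions, or the short-time wave parametrix with an Egorov argument at scale $\lambda^{-\delta_0}$. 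With that in hand, the frequency mismatch near the diagonal can be exploited.

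The paper sidesteps all of this. It writes $\kappa(x)^{-1}\gamma\kappa(y)=a(-x_2)\,g\,a(y_2)$ with $g=n(-x_1)g_0^{-1}\gamma g_0n(y_1)$. Apart from $k_\lambda$, the integrand depends on $x_2,y_2$ only through $\chi(x_2)\chi(y_2)e^{i\eta_2y_2-i\xi_2x_2}$. So the paper bounds every other variable trivially and reduces to the one-dimensional integral of Lemma~\ref{lem: Marshall rapid decay} with $\beta\sim\lambda^{1-2\delta_0}$. The hypothesis of that lemma holds because $d(g,A)\ge\lambda^{-\delta_0+2\epsilon_0}-O(\lambda^{-\delta_0})\ge\lambda^{-1/2+\epsilon_0}\beta^{1/2}$.

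Thus only the radial window $|\xi_2-\lambda|\lesssim\lambda^{1-2\delta_0}$ is used. Contrary to your remark, this window is not weaker: at frequency $\approx\lambda$ it already forces an angle $\lesssim\lambda^{-\delta_0}$ with the geodesic and the correct orientation. Marshall's Proposition 6.9 then supplies the uniform treatment of $k_\lambda$, including near the identity.
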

\begin{proof}
    For $f\in C^\infty(X)$ we let $$K_\la f(x)=\int_X K_\la(x,y) f(y)dy=\int_\BH k_\la(x^{-1}y)f(y) dy,$$ 
    with $\|K_\la\|_{L^2\to L^2}\lesssim1$ by our construction.
    Therefore, by Cauchy-Schwarz and Lemma \ref{ab},
    \begin{multline*}
        |I_\sA(\lambda,\phi,\gamma)|=\left| \int_X K_\la\sA^*\phi(\gamma^{-1}x)\overline{\sA^*\phi(x)}dx\right|\leq \|K_\la \sA^*\phi\|_{L^2} \|\sA^*\phi\|_{L^{2}}
        \leq \|\sA^*\|_{L^{2}\to L^{2}}^2\|\phi\|_{L^{2}}^2\lesssim1.
    \end{multline*}

    For part \textit{(b)}, we first expand $I_\sA(\lambda,\phi,\gamma)$ by the intgeral representation for $\sA^*$ as
    \begin{multline*}
        I_\sA(\lambda,\phi,\gamma)=\iint_{\tilde\Omega}\iint_{T^*\tilde\Omega} e^{i\langle y-v,\eta\rangle-i\langle x-u, \xi \rangle }\sA^*(x,u,\xi)\sA^*(y,v,\eta)\overline{\phi(u)} \phi(v) \\
        k_\la(\kappa(x)^{-1}\gamma \kappa(y)) e^{-x_2-y_2} dxdy dud\xi dv  d\eta.
    \end{multline*}
    We consider the integral over $x_2,y_2$ in $I_\sA(\lambda,\phi,g)$, that is,
    \begin{align*}
        \iint_{-\infty}^\infty  \chi(x_2)\chi(y_2)e^{i\eta_2 y_2-i\xi_2 x_2}        k_\la(a(-x_2)n(-x_1)g_0^{-1}\gamma g_0n(y_1)a(y_2)) dx_2 dy_2.
    \end{align*}
    Here we use the relation $\sA^*(x,y,\xi)=e^{x_2-y_2} \sA(y,x,\xi)$, and the explicit formulas \eqref{eq: symbol A} for $\sA$ and \eqref{eq: diffeo kappa} for $\kappa$.
    Let $\beta=(0.3r)C_1\la^{1-2\delta_0}\sim \la^{1-2\delta_0}$ and $g=n(-x_1)g_0^{-1}\gamma g_0n(y_1)$.
    From the support of $\sA^*$, we may assume that $ |x_1|,|y_1|\lesssim \la^{-\delta_0}$ and $\xi_2,\eta_2\in [\la-\beta,\la+\beta]$.  Therefore, by applying H\"older's inequality to the integrals over the rest variables, it suffices to show
    \begin{align}\label{eq: int I}
        \iint_{-\infty}^\infty  \chi(x_2)\chi(y_2)e^{i\eta_2 y_2-i\xi_2 x_2}        k_\la(a(-x_2)ga(y_2)) dx_2 dy_2\lesssim_N \la^{-N}.
    \end{align}
    Since $d(g_0^{-1}\gamma g_0,A)\geq \lambda^{-\delta_0+2\epsilon_0}$ and $ x_1,y_1\lesssim \la^{-\delta_0}$, it may be seen that $d({g},A)\geq \lambda^{-1/2+\epsilon_0}\beta^{1/2}$ provided $\la\gg1$. Then \eqref{eq: int I} follows from the following lemma.
\end{proof}

\begin{lemma}\label{lem: Marshall rapid decay}
    Given $0<\epsilon'<1/2$, suppose that $\lambda^{\epsilon'}\leq\beta \leq \lambda^{1-\epsilon'}$, and $\xi_2,\eta_2\in[\lambda-\beta,\lambda+\beta]$. If $\epsilon_0>0$, $d(g,e)\lesssim1$ and $d(g,A)\geq \lambda^{-1/2+\epsilon_0}\beta^{1/2}$, we have
    \begin{align*}
        \iint_{-\infty}^\infty  \chi(x_2)\chi(y_2)e^{i\eta_2 y_2-i\xi_2 x_2}        k_\la(a(-x_2)ga(y_2)) dx_2 dy_2\lesssim_{\epsilon_0,N} \la^{-N},
    \end{align*}
    for any $N>0$.
\end{lemma}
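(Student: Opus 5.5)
The plan is to follow Marshall's treatment of the analogous integral in \cite{Mar16}: expand $k_\la$ spectrally via the Harish-Chandra inversion formula and Harish-Chandra's integral representation of spherical functions, and then do nonstationary phase in $(x_2,y_2)$. Write
\[
k_\la(g')=\frac{1}{4\pi}\int_\BR h_\la(s)\,\varphi_s(g')\,|c(s)|^{-2}\,ds,\qquad \varphi_s(g')=\int_K e^{(\frac12+is)H(kg')}\,dk,
\]
where $H(\cdot)\in\BR$ is the $A$-coordinate in the Iwasawa decomposition $G=NAK$. Since $h_\la=(h^0_\la)^2$ is Paley--Wiener, $h_\la(s)$ is rapidly decreasing away from $|s|=\la$. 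Truncating to $\bigl||s|-\la\bigr|\le \la^{\epsilon_0/2}$ therefore costs $O_N(\la^{-N})$, and on the remaining range $|c(s)|^{-2}\lesssim \la$. It thus suffices to show, uniformly in $k\in K$ and in such $s$ (taking $s>0$; the case $s<0$ is symmetric), that
\[
J(k,s)=\iint \chi(x_2)\chi(y_2)\,e^{i\eta_2y_2-i\xi_2x_2}\,e^{(\frac12+is)H(k a(-x_2)ga(y_2))}\,dx_2\,dy_2=O_N(\la^{-N}).
\]

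The first step is the derivative computation. Because $a(y_2)$ acts on the right, $\partial_{y_2}H(k'a(y_2))=\cos\vartheta$, where $\vartheta$ is the angle between the geodesic $\{k'a(t)\cdot i\}$ and the horocyclic flow direction. This is the familiar fact that Busemann functions have unit gradient, and it gives $|\partial_{y_2}H|\le1$, with equality exactly when $k'a(t)$ lies in $NA$ up to a rotation by $\pi$. The same holds in $x_2$ after writing $k a(-x_2) g a(y_2)$ and differentiating from the left. The total phase is $\Phi=sH(\cdot)+\eta_2y_2-\xi_2x_2$. Since $\xi_2,\eta_2\in[\la-\beta,\la+\beta]$ and $|s-\la|\le\la^{\epsilon_0/2}\ll\beta$, a critical point requires both $1-\cos\vartheta_y\lesssim \beta/\la$ and $1-\cos\vartheta_x\lesssim\beta/\la$. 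In other words, both angles must be $\lesssim(\beta/\la)^{1/2}$.

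The second step, which I expect to be the main geometric obstacle, is to show that these two near-alignments together force $d(g,A)\lesssim (\beta/\la)^{1/2}$, contradicting the hypothesis $d(g,A)\ge\la^{-1/2+\epsilon_0}\beta^{1/2}$. Heuristically, the $y_2$-condition says the geodesic $g a(\cdot)\cdot i$, viewed from the boundary point determined by $k$, is nearly the geodesic through $k^{-1}\infty$. The $x_2$-condition says the same for $a(\cdot)\cdot i$ through the same boundary point. Two unit-speed geodesics that pass within $O(1)$ of each other (because $d(g,e)\lesssim1$) and both nearly emanate from a common boundary point are $O(\theta)$-close as geodesics, and hence $g$ is $O(\theta)$-close to $A$. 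I would make this precise by a quantitative computation in $\SL(2,\BR)$ coordinates: write $g=n(u)a(v)\bar n(w)$ near $e$, note that $d(g,A)\sim|u|+|w|$, and compute the two cosines explicitly.

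The third step is integration by parts away from critical points. At least one of $|\partial_{x_2}\Phi|,|\partial_{y_2}\Phi|$ is then $\gtrsim \la\,\theta^2\gtrsim \la^{2\epsilon_0}\beta$, where $\theta\gtrsim\la^{\epsilon_0}(\beta/\la)^{1/2}$ is the relevant angle, while $\partial^n\Phi=O(\la\theta)$ for $n\ge2$. I would integrate by parts in the variable with the large derivative, possibly after a smooth partition in $(x_2,y_2)$ into regions where each derivative dominates. Each step gains a factor $\la\theta/(\la\theta^2)^2\cdot\ldots\lesssim(\la\theta^2)^{-1}\theta^{-1}\cdot\theta\ldots$; more carefully, with $\theta$ as the natural scale of variation, each integration gains $(\la\theta^2)^{-1/2}\ge\la^{-\epsilon_0}$ up to the $\beta\ge\la^{\epsilon'}$ factor. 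Iterating $O_N(1)$ times gives $\la^{-N}$, which also absorbs the polynomial factors $\la$ from $|c(s)|^{-2}$ and from the $s$-integration. Tracking these scales honestly, in particular checking that the amplitude's derivatives are at most $\theta^{-1}$ per derivative, is the part I expect to require the most care.
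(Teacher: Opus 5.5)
Your opening reduction (Harish-Chandra inversion, truncating to $\bigl||s|-\lambda\bigr|\le\lambda^{\epsilon_0/2}$, $|c(s)|^{-2}\lesssim\lambda$) is fine. The plan then breaks at its central claim: $J(k,s)=O_N(\lambda^{-N})$ uniformly in $k\in K$ is false, and the geometric step 2 that is meant to justify it is wrong.

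Here is a counterexample. Take $k=\begin{pmatrix}0&-1\\1&0\end{pmatrix}$, so that $k^{-1}\infty=0$, and take $g=\begin{pmatrix}1&0\\c&1\end{pmatrix}$ with $|c|\sim1$. Then $d(g,e)\lesssim1$ and $d(g,A)\sim|c|$, which satisfies the separation hypothesis in the relevant regime. A direct computation gives $ka(-x_2)ga(y_2)\cdot i=-ce^{x_2}+ie^{x_2-y_2}$. Hence $H(ka(-x_2)ga(y_2))=x_2-y_2$ identically, and $\Phi=(s-\xi_2)x_2+(\eta_2-s)y_2$. For $\xi_2=\eta_2=s=\lambda$ this gives $J(k,\lambda)=\int\chi(x)e^{x/2}\,dx\int\chi(y)e^{-y/2}\,dy\asymp1$. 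By continuity, $|J(k',s)|\gtrsim1$ for $k'$ in a neighbourhood of $k$ of size $\gtrsim\lambda^{-1}$. So no bound on $|J(k,s)|$ alone, multiplied by $|c(s)|^{-2}\sim\lambda$, can produce $O(\lambda^{-N})$.

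Geometrically, $A\cdot i$ and $gA\cdot i$ are asymptotic at the common endpoint $0=k^{-1}\infty$, yet they stay at distance $\sim|c|$ in the bounded region. So two geodesics that (nearly) emanate from the same boundary point are not forced to be $O(\theta)$-close. A secondary point: since $a(-x_2)$ acts on the left of $g$, $\partial_{x_2}H$ is the derivative of a Busemann function along a hypercycle. It is not a cosine and can exceed $1$ in absolute value.

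The missing idea is that the rapid decay comes from oscillation in the $K$-variable as well. You need a (non)stationary phase analysis in all three variables $(k,x_2,y_2)$. This is cleanest with the formula $\varphi_s(g_1^{-1}g_2)=\int_K e^{(\frac12+is)H(kg_2)}e^{(\frac12-is)H(kg_1)}\,dk$, taking $g_1=a(x_2)$ and $g_2=ga(y_2)$. There both the $x_2$- and $y_2$-derivatives are genuine cosines in $[-1,1]$. Stationarity in $k$ forces $k^{-1}\infty$ to be an endpoint of the geodesic through $a(x_2)\cdot i$ and $ga(y_2)\cdot i$. Only this third condition, combined with the two near-alignment conditions, yields $d(g,A)\lesssim(\beta/\lambda)^{1/2}$. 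You would then still have to integrate by parts in $(k,x_2,y_2)$ with the correct anisotropic scales.

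The paper does not redo any of this. It obtains the lemma directly from \cite[Proposition 6.9]{Mar16}, using the inverse Harish-Chandra transform and the rapid decay of $h_\lambda$ away from $\lambda$. It also remarks that Marshall's restriction $\beta\le\lambda^{2/3}$ can be relaxed to $\beta\le\lambda^{1-\epsilon'}$.
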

\begin{proof}
    This follows immediately from a result of Marshall \cite[Proposition 6.9]{Mar16} by the inverse Harish-Chandra transform and the rapid decay of $h_\la$ away from $\la$.
    Note that in \cite{Mar16} it is required that $\beta\leq\lambda^{2/3}$. However, this assumption is not necessary, and $\lambda^{2/3}$ could be replaced with $\lambda^{1-\epsilon'}$ (this is also mentioned in \cite[Remark p.466]{Mar16}). 
\end{proof}

We also need an input on the estimation of Hecke returns, which counts how many times the Hecke operators map a geodesic segment close to itself. We define, for $g\in G$ and $0\leq\kappa\leq1$,
\begin{align*}
     \sM(g,n,\kappa)=\{\eta\in R(n)\mid d(g^{-1}\eta g,e)\leq1, d(g^{-1}\eta g,A)\leq\kappa\}.
\end{align*}

\begin{lemma}\label{lem: Hecke return new}
     There exists a constant $C>0$ such that the following holds. If $d(g,e)\leq1$ and $\kappa\le Cn^{-1}$, we have the bound
    \[         |\sM(g,n,\kappa)|\ll_\epsilon n^\epsilon.     \]
\end{lemma}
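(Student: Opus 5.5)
The plan is to follow Marshall's treatment of Hecke returns to a geodesic in \cite{Mar16}: force every returning element into a fixed commutative subalgebra of $D$ determined by the geodesic, then count lattice points in a norm form.

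The first step is to conjugate the problem. Set $\eta'=g^{-1}\iota(\eta)g$. The condition $d(\eta',A)\le\kappa$ together with $d(\eta',e)\le1$ says that, after dividing by $\sqrt n$, the off-diagonal entries of $g^{-1}\iota(\eta)g$ are $O(\sqrt n\,\kappa)$ and the diagonal entries are $O(\sqrt n)$. Let $H\in\fg$ be the generator of $A$. Then $g H g^{-1}$ is a traceless matrix $Y$ of norm $\sim1$, and the commutator $[\iota(\eta),Y]$ has norm $O(n^{1/2}\kappa)$.

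The main step, and the one I expect to be the real obstacle, is to show that any two elements $\eta_1,\eta_2\in\sM(g,n,\kappa)$ commute. The natural object is $\eta_1\eta_2-\eta_2\eta_1\in R$. Since both $\eta_i$ are close to $g A g^{-1}$ and the diagonal group is commutative, I would estimate the commutator after conjugation by $g$. The diagonal parts commute exactly. The cross terms involve one off-diagonal entry of size $\sqrt n\,\kappa$ and one entry of size $\sqrt n$, so the commutator has all archimedean entries of size $O(n\kappa)$.

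To turn this smallness into vanishing, I would use integrality. The element $\xi=\eta_1\eta_2-\eta_2\eta_1$ has reduced trace zero and lies in $R$, so $\mathrm{nrd}(\xi)=-\xi^2$ is an integer. Since $\iota(\xi)$ is a real matrix with entries $O(n\kappa)$, its determinant is $O(n^2\kappa^2)$, which is less than $1$ once $\kappa\le Cn^{-1}$ with $C$ small. Because $D$ is a division algebra, $\mathrm{nrd}(\xi)=0$ forces $\xi=0$.

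Here the determinant is not positive definite; it is indefinite over $\BR$. Even so, smallness of all entries still bounds $|\det|$, so the argument goes through. This is exactly where the hypothesis $\kappa\le Cn^{-1}$ enters.

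Once commutativity is established, the rest is counting. If $\sM$ contains some non-scalar $\eta_0$ (scalars $\pm\sqrt n$ contribute at most $2$), then all elements lie in the centralizer of $\eta_0$, namely the quadratic field $F=\BQ(\eta_0)\subset D$. They then lie in the order $R\cap F$ and have norm $n$. This field is real quadratic, since $\eta_0$ is hyperbolic and close to the split torus $gAg^{-1}$.

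The count of elements of norm $n$ in an order of $F$ modulo units is the number of ideals of norm $n$, which is $\ll_\epsilon n^\epsilon$ by the divisor bound. The unit group is infinite, so I would also control how many unit multiples $u\eta$ satisfy $d(g^{-1}\iota(u\eta) g,e)\le1$. Such units have bounded archimedean size, so there are $O(1)$ of them. The remaining task is uniformity in $F$: the count of units of bounded size and the divisor bound for ideals must both hold uniformly in the order $R\cap F$. Both statements are uniform because the bounded-size condition limits the unit count absolutely, and the ideal count is bounded by a divisor-type function independent of the discriminant. This gives $|\sM(g,n,\kappa)|\ll_\epsilon n^\epsilon$.
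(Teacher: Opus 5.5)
Your proposal is correct and follows the paper's proof. You show that any two elements of $\sM(g,n,\kappa)$ commute because their commutator is an element of $R$ of size $O(n\kappa)$, which places $\sM(g,n,\kappa)$ inside one quadratic subfield; you then count elements of norm $n$ with archimedean size $\lesssim n^{1/2}$ using the divisor bound and a uniformly bounded number of units. The differences are minor: you make the commutator vanish via integrality of its reduced norm plus the division-algebra property, where the paper appeals directly to $R$ being discrete, and you supply the unit-counting details that the paper leaves as ``it may be seen.''
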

\begin{proof}
    We first show that elements in \(\sM(g,n,\kappa)\) are commutative with each other; as a consequence, $\sM(g,n,\kappa)$ must be contained in a quadratic field over $\BQ$ in the quaternion division algebra $A$. Suppose otherwise and we have $xy-yx\neq 0$ for some $x, y\in\sM(g,n,\kappa)$. There exist $u,v\in gAg^{-1}$ so that $\det u=\det v =1$, $uv-vu=0$ and
    \[
    \frac{1}{\sqrt{n}}\iota(x)=u+O(\kappa),\qquad \frac{1}{\sqrt{n}}\iota(y)=v+O(\kappa).
    \]
    Therefore, \(0\neq xy-yx= O(n\kappa).\)
    Using $n\kappa\leq C$ and choosing $C$ to be sufficiently small, we get a contradiction with $xy-yx\in R$.

    Now we let $E$ be the quadratic field containing $\sM(g,n,\kappa)$. The reduced norm $\mathrm{nrd}$ restricted to $E$ agrees with the norm map on $E$. Therefore, if $\eta\in\sM(g,n,\kappa)$, then $\eta\in E\cap R$ has norm $n$. Moreover, by the boundedness condition on $g$ and $\eta$, the image of $\eta$ under any archimedean embedding of $E$ must be $\lesssim n^{1/2}$. It may be seen that the number of these algebraic numbers must be $\lesssim_\e n^\e$.
\end{proof}

\subsubsection{Amplifier}\label{subsubsection amplifier}
Let $N\geq1$ be a large positive integer to be chosen later. If $1< p\leq\sqrt{N}$ is a prime so that $(p,q)=1$, \eqref{eq: Hecke relation} implies the following relation for the Hecke eigenvalues:
\begin{align*}
     \lambda(p)^2-\lambda(p^2)=1.
\end{align*}
For such a prime $p$, the above relation implies that if $|\lambda(p)|<1/2$ then $|\lambda(p^2)|>3/4$. We set
\begin{align*}
     \begin{cases}
         \alpha_p= \frac{\lambda(p)}{|\lambda(p)|},\;\alpha_{p^2}=0,\qquad&\text{ if }|\lambda(p)|\geq 1/2,\\
         \alpha_p= 0,\;\alpha_{p^2}=\frac{\lambda(p^2)}{|\lambda(p^2)|},\qquad&\text{ if }|\lambda(p)|< 1/2.
     \end{cases}
\end{align*}
We set $\alpha_n=0$ for all the other $n$. It may be seen that
\begin{align}
     \sum_{n\leq N}|\alpha_n| = \sum_{n\leq N}|\alpha_n|^2=\sum_{\substack{1<p\leq\sqrt{N}\\ (p,q)=1}} 1 \lesssim N^{1/2}.\label{eq: moment for alpha}
\end{align}
Let $\cT = \sum_{n=1}^N \alpha_n T_n$ be our amplifier. Then we have $\cT\psi = \left(\sum_{n=1}^N \alpha_n\lambda(n) \right) \psi$ and the eigenvalue of the amplifier satisfies
\begin{align}\label{eq: lower bound for amplifier eigenvalue}
     \left|\sum_{n=1}^N \alpha_n\lambda(n)\right|\geq \sum_{\substack{1<p\leq\sqrt{N}\\ (p,q)=1}} \frac{1}{2}\gtrsim_\epsilon N^{1/2-\epsilon}.
\end{align}

\begin{proof}[Proof of \eqref{i3}]
    By Lemma \ref{lem 3.10}, we may bound $\| \sA\psi\|_{L^2(X)}$ instead, and moreover by duality, it suffices to bound $\langle\sA\psi,\phi\rangle_X$ uniformly for any $\phi\in C^\infty(X)$ with $\|\phi\|_{L^{2}(X)}=1$.
     By Proposition \ref{prop: amplification inequality}, we have
     \begin{align}\label{eq: amplification inequality}
         \left|\langle \sA\cT\psi,\phi\rangle_X\right|^2\leq\sum_{m,n\leq N} |\alpha_m  \alpha_n | \sum_{d|(m,n)} \frac{d}{\sqrt{mn}}\sum_{\gamma\in R(mn/d^2)} |I_\sA(\la,\phi,\gamma )|.
     \end{align}
     Fix $0<\epsilon_0\ll\delta_0$. By assuming the support of $k_\la$ and $\Omega$ are sufficiently small, and by Proposition \ref{prop: bound for I_A}, we only need to consider the terms in \eqref{eq: amplification inequality} with $d(g_0^{-1}\gamma g_0,e)\leq1$ and $d(g_0^{-1}\gamma g_0,A)\leq\lambda^{-\delta_0+2\epsilon_0}$. Lemma \ref{lem: Hecke return new} provides a constant $C>0$ so that as long as $n\leq C\lambda^{\delta_0 -2\epsilon_0}$ then \(|\sM(g_0,n,\lambda^{-\delta_0 +2\epsilon_0})|\ll_\epsilon n^\epsilon\).
     Therefore, if $N>0$ is chosen so that  $N^2\leq C\lambda^{\delta_0 -2\epsilon_0}$, by the uniform estimate $|I_\sA(\la,\phi,\gamma )|\lesssim1$ from Proposition \ref{prop: bound for I_A}, we have
     \begin{align*}
         &\sum_{m,n\leq N} |\alpha_m\alpha_n| \sum_{d|(m,n)} \frac{d}{\sqrt{mn}}\sum_{\gamma\in R(mn/d^2)}\left| I_{\sA}(\lambda, \phi, \gamma ) \right|\\
         \lesssim& \sum_{m,n\leq N} |\alpha_m\alpha_n| \sum_{d|(m,n)} \frac{d}{\sqrt{mn}} |\sM(g_0,\frac{mn}{d^2},\lambda^{-\delta_0+2\epsilon_0})|+O_{\epsilon_0,A}(\lambda^{-A})\\
         \lesssim&_\epsilon N^\epsilon\sum_{m,n\leq N} |\alpha_m\alpha_n| \sum_{d|(m,n)} \frac{d}{\sqrt{mn}}+O_{\epsilon_0,A}(\lambda^{-A}).
     \end{align*}
     We have
     \begin{align}\label{eq 323}
     \begin{split}
          \sum_{m,n\leq N} \sum_{d|(m,n)}|\alpha_m\alpha_n|\frac{d}{\sqrt{mn}}&=\sum_{\substack{m,n\leq N\\(m,n)=1}} \sum_{ml,nl\leq N}\sum_{d|l}|\alpha_{ml}\alpha_{nl}|\frac{d}{l\sqrt{mn}}\\
          &\lesssim_\epsilon N^\epsilon\sum_{ml,nl\leq N}\left( \frac{|\alpha_{ml}|^2}{n} +  \frac{|\alpha_{nl}|^2}{m} \right)\lesssim_\epsilon N^\epsilon \sum_{n\leq N} |\alpha_n|^2.
    \end{split}
     \end{align}
     Hence from \eqref{eq: amplification inequality} we have
     \begin{align*}
         \left| \langle \sA\cT\psi,\phi\rangle_X \right|^2   \lesssim&_\epsilon N^\epsilon\sum_{n\leq N} |\alpha_n|^2 +O_{\epsilon_0,A}(\lambda^{-A}).
     \end{align*}
     Combining this with \eqref{eq: moment for alpha} and \eqref{eq: lower bound for amplifier eigenvalue} gives
     \begin{align*}
         N^{1-\epsilon}\left|\langle \sA\psi,\phi\rangle_X \right|^2\lesssim&_\epsilon N^{\frac{1}{2}+\epsilon}+O_{\epsilon,A}(\lambda^{-A}),
     \end{align*}
     which completes the proof by choosing $N=(C\lambda^{\delta_0 -2\epsilon_0})^{\frac{1}{2}}$ and choosing $\epsilon_0$ small.
\end{proof}

\subsection{Proof of (\ref{eq: improved KN sup})}

We recall the amplified pretrace formula \eqref{eq: amplified pretrace}, for any $x,y\in X$,
    \begin{align*}
         \sum_j h_\la(\lambda_j) \cT\psi_j(x) \overline{\cT\psi_j(y)} =\sum_{m,n\leq N} \alpha_m  \overline\alpha_n\sum_{d|(m,n)} \frac{d}{\sqrt{mn}}\sum_{\gamma\in R(mn/d^2)} k_\la(x^{-1}\gamma y).
     \end{align*}
Using the operator $\sA\times\overline\sA$ to act in $x$ and $y$ variables and taking the diagonal, we have 
\begin{equation}\label{l4}
    \begin{aligned}
   \sum_j& h_\la(\lambda_j) |\sA\cT\psi_j(\kappa(u))|^2=\sum_{m,n\leq N} \alpha_m  \overline\alpha_n\sum_{d|(m,n)} \frac{d}{\sqrt{mn}}\sum_{\gamma\in R(mn/d^2)} k_\sA(\lambda,u,\gamma).        
    \end{aligned}
\end{equation}
Here $u\in\tilde\Omega$, $\kappa$ is the map \eqref{eq: diffeo kappa}, and $k_\sA(\lambda,u,\gamma)$ is given by the integral
\begin{equation}\label{eq: integral for kA}
    k_\sA(\lambda,u,\gamma)=\iint_{T^*\tilde\Omega} e^{i\langle u-x,\xi\rangle-i\langle u-y, \eta \rangle }\sA(u,x,\xi)\sA(u,y,\eta) 
        k_\la(\kappa(x)^{-1}\gamma \kappa(y))  dxd\xi dy  d\eta.
\end{equation}

\begin{proposition}\label{prop: amplification inequality supnorm}
    For any $\kappa(u)\in \Omega$, we have
    \[
    \left|\sA\cT\psi(\kappa(u))\right|^2\leq\sum_{m,n\leq N} |\alpha_m  \alpha_n | \sum_{d|(m,n)} \frac{d}{\sqrt{mn}}\sum_{\gamma\in R(mn/d^2)} |k_\sA(\la,u,\gamma)|.
    \]
\end{proposition}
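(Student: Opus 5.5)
The argument is the pointwise analogue of Proposition~\ref{prop: amplification inequality}. We start from the identity \eqref{l4}, which is obtained from the amplified pretrace formula \eqref{eq: amplified pretrace}. Apply $\sA$ in the $x$-variable and $\overline{\sA}$ in the $y$-variable, write both in the local coordinates $\kappa$ via \eqref{eq: defn of operator A}, and set $x=y=\kappa(u)$. On the spectral side this gives
\[
\sum_j h_\la(\la_j)\,\sA\cT\psi_j(\kappa(u))\,\overline{\sA\cT\psi_j(\kappa(u))}=\sum_j h_\la(\la_j)\,|\sA\cT\psi_j(\kappa(u))|^2 .
\]
On the geometric side, since $\sA$ has a real symbol by \eqref{eq: symbol A}, the complex conjugate of $\sA$ applied to the $y$-variable produces the phase $e^{-i\langle u-y,\eta\rangle}$. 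This yields exactly the integral $k_\sA(\la,u,\gamma)$ in \eqref{eq: integral for kA}.

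Justifying the interchange needs two facts. First, the kernel $k_\la$ is smooth and compactly supported. Second, the spectral sum converges absolutely and uniformly, because $h_\la$ decays rapidly and Weyl's law bounds the growth of the eigenvalues. Hence applying the pseudo-differential operators term by term to the expansion $\sum_j h_\la(\la_j)\psi_j(x)\overline{\psi_j(y)}$ is legitimate. On the geometric side, the sum over $\gamma\in R(mn/d^2)$ is finite for $x,y$ in the compact set $\overline{\Omega}$, so it commutes with the integrals. The operators $\sA$ only involve values of functions on $\Omega\subset\cD$, so working in the chart $\kappa$ causes no ambiguity.

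With \eqref{l4} in hand, the inequality follows from positivity. By construction $h_\la=(h_\la^0)^2\ge0$ for real spectral parameters. For exceptional (imaginary) $\la_j$ one still has $h_\la(\la_j)\ge0$, since $h$ is real and $h_\la^0$ is real there. Also $h_\la(\la)\ge h(0)^2=1$, because $h\ge0$. Since $\psi$ belongs to the basis $\{\psi_j\}$, we drop every term except $\psi$ on the left-hand side and bound the right-hand side by its absolute value using the triangle inequality. The result is
\[
|\sA\cT\psi(\kappa(u))|^2\le\sum_{m,n\le N}|\alpha_m\alpha_n|\sum_{d|(m,n)}\frac{d}{\sqrt{mn}}\sum_{\gamma\in R(mn/d^2)}|k_\sA(\la,u,\gamma)|.
\]

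The only step requiring any care is the first: checking that the conjugated application of $\sA$ in the second variable really gives the stated phase and symbol in \eqref{eq: integral for kA}, and that the interchange of sums and integrals is valid. Everything else is the same positivity argument used in Proposition~\ref{prop: amplification inequality}.
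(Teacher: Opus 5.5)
Your proposal is correct and is essentially the paper's argument: the paper applies $\sA\times\overline{\sA}$ to the amplified pretrace formula to get \eqref{l4}, and then, exactly as in Proposition~\ref{prop: amplification inequality}, uses $h_\la(\la_j)\ge 0$ and $h_\la(\la)\ge 1$ to discard every spectral term except $\psi$ before taking absolute values on the geometric side. Your extra justifications (reality of $h_\la^0$ at exceptional spectral parameters, finiteness of the $\gamma$-sums, the interchange of sums and integrals) make explicit what the paper leaves implicit; one cosmetic point is that the $(2\pi)^{-2}$ normalizations in \eqref{eq: defn of operator A} give an overall factor $(2\pi)^{-4}$ that is not written in \eqref{eq: integral for kA}, which is harmless since it only strengthens the inequality.
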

\begin{proof}
    The same as the proof for Proposition \ref{prop: amplification inequality}, we obtain the inequality by dropping all terms on the left-hand side of \eqref{l4} except $\psi$.
\end{proof}

Depending on relations between $u$ and $\gamma$, we bound $k_\sA(\lambda,u,\gamma)$ in the following proposition.

\begin{proposition}\label{p}
Fix $0<\epsilon_0\ll\delta_0$ and let $u\in\tilde\Omega$.
\begin{enumerate}
    \item We have $k_\sA(\lambda,u,\gamma)\lesssim_{\e_0} \la^{\frac{1}{2}} d_g(\kappa(u),\gamma \kappa(u))^{-\frac{1}{2}}$ if
    \begin{align}\label{eq: size condition on dg}
        \la^{-1+2\delta_0+2\epsilon_0}\lesssim d_g(\kappa(u),\gamma \kappa(u))\lesssim1.
    \end{align}
    \item We have $k_\sA(\lambda,u,\gamma)\lesssim_{\e_0} \la^{1-\delta_0+2\e_0}$, if
    \begin{align}\label{eq: size condition on dg1}
        d_g(\kappa(u),\gamma \kappa(u))\lesssim  \la^{-1+2\delta_0+2\epsilon_0}.
    \end{align}
    \item  If $d(\gamma,e)\lesssim 1$ and $d(g_0^{-1}\gamma g_0,A)\gtrsim \lambda^{-\delta_0+2\epsilon_0}$, then $k_\sA(\lambda,u,\gamma)\lesssim_{\epsilon_0,N} \lambda^{-N}$ for any $N>0$.
\end{enumerate}
\end{proposition}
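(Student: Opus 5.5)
We will treat the three parts separately, each time starting from the integral representation \eqref{eq: integral for kA} and the fact that $k_\la$ is the kernel of a smoothed spectral projector. We write it via the inverse Harish-Chandra transform as an oscillatory integral with phase $\pm\la d_g$. Concretely, up to $O(\la^{-N})$, $k_\la(z,w)=\la^{1/2}\,b(z,w,\la)\,e^{i\la d_g(z,w)}d_g(z,w)^{-1/2}+\text{c.c.}$ in the range $\la^{-1}\lesssim d_g\lesssim 1$, with $b$ a symbol, and $|k_\la|\lesssim \la$ for $d_g\lesssim\la^{-1}$.

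\emph{Part (c).} We follow the proof of Proposition \ref{prop: bound for I_A}(b). Write $\sA(u,x,\xi)$ explicitly using \eqref{eq: symbol A}. The $x_2,y_2$ integrals carry $e^{-i\xi_2x_2+i\eta_2y_2}$ against $\chi(x_2)\chi(y_2)$ (here $u_2$ only enters as a fixed shift). After integrating trivially over the remaining variables, with $|x_1|,|y_1|\lesssim\la^{-\delta_0}$ and $\xi_2,\eta_2\in[\la-\beta,\la+\beta]$, $\beta\sim\la^{1-2\delta_0}$, we reduce to exactly the integral of Lemma \ref{lem: Marshall rapid decay} with $g=n(-x_1)g_0^{-1}\gamma g_0n(y_1)$. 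The hypothesis $d(g_0^{-1}\gamma g_0,A)\gtrsim\la^{-\delta_0+2\epsilon_0}$ gives $d(g,A)\ge\la^{-1/2+\epsilon_0}\beta^{1/2}$, as before. The phase-space volume of the other variables is polynomial in $\la$, so the $\la^{-N}$ decay survives.

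\emph{Parts (a) and (b).} We view $k_\sA(\la,u,\gamma)=\bigl(\sA_1\otimes\overline{\sA_1}\,k_\la(\cdot^{-1}\gamma\,\cdot)\bigr)(u,u)$, i.e. the kernel $k_\la$ smoothed by the wave packet $\sA$ in each variable. By \eqref{eq: Schwartz kernel for sA}, $\sA(u,\cdot)$ is an $L^1$-bounded kernel concentrated on a $\la^{-1+\delta_0}\times\la^{-1+2\delta_0}$ box around $u$. Thus the trivial bound is $|k_\sA|\lesssim\sup|k_\la|$ over $x,y$ in that box. This gives (a) immediately whenever the box is small compared with $d_g(\kappa(u),\gamma\kappa(u))$, because then $d_g(\kappa(x),\gamma\kappa(y))\sim d_g(\kappa(u),\gamma\kappa(u))$. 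The lower bound in \eqref{eq: size condition on dg} is exactly the box size $\la^{-1+2\delta_0}$, up to $\la^{2\epsilon_0}$.

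For (b) the trivial bound gives only $\la$, so we need to gain the $\la^{-\delta_0}$. We will instead use the frequency localisation. Fourier expand $k_\la$ in $x,y$ on the box, and note that the symbol of $\sA$ restricts frequencies to a set of measure $\la^{1-\delta_0}\cdot\la^{1-2\delta_0}$ in each variable. Equivalently, we write $k_\la(z,w)=\int h_\la(r)\,\phi_r(z,w)\,d\mu(r)$ using the spherical function. Applying $\sA$ projects the spherical function $\phi_r$, whose wavefront is a full circle of directions, onto a $\la^{-\delta_0}$-aperture cone. Bounding $\phi_r$ in $L^\infty$ after projection by $(\text{aperture})\cdot O(1)$ and integrating $h_\la$ over $r$ then gives $\la\cdot\la^{-\delta_0}$, up to the $\la^{2\epsilon_0}$ loss from the box size. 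One can check this via stationary phase: in polar form, $k_\la(z,w)\approx\int_{S^1}\la\, e^{i\la\langle z-w,\omega\rangle}\,d\omega$ locally, and the $\xi_1$ cutoff restricts $\omega$ to an arc of length $\la^{-\delta_0}$. This also reproduces (a), via stationary phase in $\omega$, at distances $\gg\la^{-1+2\delta_0}$.

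The main obstacle is making this local Euclidean heuristic rigorous in the hyperbolic Iwasawa coordinates, where the phase $\la d_g$ is only approximately linear on the box. We would handle it using Lemma \ref{lem: der of dg}, whose bound $\partial^n_{z_2}d_g=O(\la^{-2\delta_0})$ is exactly what allows integration by parts at the scale $\la^{-1+2\delta_0}$.
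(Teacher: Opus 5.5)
Your parts (a) and (c) are the paper's arguments. In (a) you use the $L^1$ bound \eqref{eq: sA3} for both packet kernels together with $|k_\la|\lesssim\la(1+\la d_g)^{-1/2}$ once $d_g(\kappa(x),\gamma\kappa(y))\sim d_g(\kappa(u),\gamma\kappa(u))$. In (c) you reduce to Lemma \ref{lem: Marshall rapid decay} exactly as in Proposition \ref{prop: bound for I_A}(b).

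\textbf{The gap is in (b).} What you give there is only a heuristic. You defer the rigorous step to Lemma \ref{lem: der of dg}, but that lemma does not apply here. It concerns pairs at distance $d_g\in(\delta/2,2\delta)$ lying near the central geodesic; it is the input for the long-range kernel of $\sigma_{\la_1}$ in Lemma \ref{lem: reduction of pdo}. In (b), by contrast, $k_\la$ is only evaluated at separations $d_g(\kappa(x),\gamma\kappa(y))\lesssim\la^{-1+2\delta_0+2\epsilon_0}$. At those scales the phase $\la d_g$ has Hessian of size $\la/d_g$ and is nowhere near linear on the packet. Your frequency-side conclusion is actually true: $\sA$ applied in $x$ to $k_\la(\kappa(x)^{-1}w)$ is $O(\la^{1-\delta_0+\epsilon})$ uniformly in $w$. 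One could prove it with a different tool, for instance by expanding $k_\la$ in horocyclic waves and discarding, by non-stationary phase, those whose direction at $u$ lies outside an arc of length $\la^{-\delta_0+\epsilon}$. As written, however, (b) is not proved.

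More importantly, none of this is needed. Your premise that size estimates only give $\la$ is wrong; it comes from keeping only $\|\sA(u,\cdot)\|_{L^1}\lesssim 1$. The missing idea is to keep the pointwise height $\la^{2-3\delta_0}$ and the anisotropic support of the $x$-kernel in \eqref{eq: Schwartz kernel for sA 2}, and integrate $|k_\la|$ against it.
\begin{itemize}
\item Up to $O(\la^{-N})$, restrict to $|x_1-u_1|\lesssim\la^{-1+\delta_0+\epsilon_0}$ and $d_g(\kappa(x),\gamma\kappa(y))\lesssim\la^{-1+2\delta_0+2\epsilon_0}$.
\item For fixed $y$, the core region $d_g(\kappa(x),\gamma\kappa(y))\le\la^{-1+\delta_0+\epsilon_0}$ contributes $\la^{2-3\delta_0}\cdot\la\cdot\la^{-2+2\delta_0+2\epsilon_0}=\la^{1-\delta_0+2\epsilon_0}$.
\item The shell $d_g\sim 2^j$ has area $\lesssim\la^{-1+\delta_0+\epsilon_0}2^j$, because $x_1$ is confined to a thin interval. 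It therefore contributes $\la^{2-3\delta_0}\cdot\la^{1/2}2^{-j/2}\cdot\la^{-1+\delta_0+\epsilon_0}2^j=\la^{\frac{3}{2}-2\delta_0+\epsilon_0}2^{j/2}$.
\item Summing over $2^j\lesssim\la^{-1+2\delta_0+2\epsilon_0}$ gives $\la^{1-\delta_0+2\epsilon_0}$ again.
\item The remaining $y$-integral costs $O(1)$ by \eqref{eq: sA3b}.
\end{itemize}
So the $\la^{-\delta_0}$ gain comes from the thinness of the packet in the $x_1$-direction, and no oscillation of $k_\la$ is used.
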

\begin{proof}
    We first recall the estimate \eqref{eq: Schwartz kernel for sA} for the Schwartz kernel of $\sA$, which follows by integration by parts and can be written in the form
    \begin{align}\label{eq: Schwartz kernel for sA 2}
        \int e^{i\langle u-x,\xi\rangle }\sA(u,x,\xi)d\xi  \lesssim_N \la^{2-3\delta_0} (1+\la^{1-\delta_0}|x_1-u_1|)^{-N}(1+\la^{1-2\delta_0}|x_2-u_2|)^{-N})
    \end{align}
    for any $N>0$. Moreover, we can assume $u$ satisfies $u_1\lesssim\la^{-\delta_0}$ and $u_2\lesssim1$, and assume $d(\gamma,e)\lesssim1$, because of the support conditions for $\sA$ and $k_\lambda$. Next we recall the standard pointwise estimate for $k_\lambda$ (see e.g \cite[Lemma 2.8]{Mar16HigerRank}), that is, for any $x,y\in\tilde\Omega$,
    \begin{align}\label{eq: std bound for k}
        k_\la(\kappa(x)^{-1}\gamma\kappa(y))\lesssim\la(1+\la d_g(\kappa(x),\gamma\kappa(y)))^{-\frac{1}{2}}.
    \end{align}
    Using \eqref{eq: Schwartz kernel for sA 2}, it may be seen that
    \begin{equation}\label{eq: sA3}
        \left|\int_{T^*\tilde\Omega}e^{i\langle u-x,\xi\rangle }\sA(u,x,\xi)d\xi dx\right|\lesssim\int \left|\int e^{i\langle u-x,\xi\rangle }\sA(u,x,\xi)d\xi \right|dx \lesssim 1.
    \end{equation}
    Note that by \eqref{eq: Schwartz kernel for sA 2}, we have 
    \begin{equation*}
        \iint e^{i\langle u-x,\xi\rangle-i\langle u-y, \eta \rangle }\sA(u,x,\xi)\sA(u,y,\eta) 
        d\xi  d\eta\lesssim_{\epsilon_0,N} \lambda^{-N}
    \end{equation*}
    if either $d_g(\kappa(u), \kappa(x))\ge \la^{-1+2\delta_0+\epsilon_0} $  or $d_g(\kappa(u), \kappa(y))\ge \la^{-1+2\delta_0+\epsilon_0}$. Therefore, we can restrict $x,y$ in the integral \eqref{eq: integral for kA} such that they satisfy $d_g(\kappa(u), \kappa(x))< \la^{-1+2\delta_0+\epsilon_0} $  and $d_g(\kappa(u), \kappa(y))< \la^{-1+2\delta_0+\epsilon_0}$. Therefore, the condition \eqref{eq: size condition on dg} implies
    \begin{equation*}
        d_g(\kappa(u),\gamma \kappa(u))\sim d_g(\kappa(x),\gamma \kappa(y)),
    \end{equation*}
    so \eqref{eq: std bound for k} gives
    \begin{align*}
        k_\la(\kappa(x)^{-1}\gamma\kappa(y))\lesssim\la(1+\la d_g(\kappa(u),\gamma \kappa(u)))^{-\frac{1}{2}}.
    \end{align*}
    Combining this with \eqref{eq: integral for kA} and \eqref{eq: sA3} proves \textit{(a)}.
    
    To prove \textit{(b)}, first note that by \eqref{eq: Schwartz kernel for sA 2}, the kernel $k_{\sA}(\lambda,u,\gamma)$ is $O(\lambda^{-N})$ if any of the following conditions hold:
\[
|x_1 - u_1| \gtrsim \lambda^{-1+\delta_0+\epsilon_0}, 
\quad \text{or} \quad
|y_1 - u_1| \gtrsim \lambda^{-1+\delta_0+\epsilon_0},
\]
or
\[
d_g(\kappa(u), \kappa(x)) \gtrsim \lambda^{-1+2\delta_0+\epsilon_0},
\quad \text{or} \quad
d_g(\kappa(u), \kappa(y)) \gtrsim \lambda^{-1+2\delta_0+\epsilon_0}.
\]
Therefore, if \eqref{eq: size condition on dg1} holds, we may restrict attention to the region where
\[
|x_1 - y_1| \lesssim \lambda^{-1+\delta_0+\epsilon_0}
\quad \text{and} \quad
d_g(\kappa(x), \gamma \kappa(y)) \lesssim \lambda^{-1+2\delta_0+2\epsilon_0}
\]
If $d_g(\kappa(x), \gamma\kappa(y))\lesssim \la^{-1+\delta_0+\e_0}$, using \eqref{eq: Schwartz kernel for sA 2} and \eqref{eq: std bound for k}, we have, for each fixed $y$,
  \begin{multline*}
        \left|\int_{d_g(\kappa(x),\gamma\kappa(y))\leq \la^{-1+\delta_0+\epsilon_0}}e^{i\langle u-x,\xi\rangle }\sA(u,x,\xi)k_\la(\kappa(x)^{-1}\gamma \kappa(y)) d\xi dx\right|\\
        \lesssim\int_{d_g(\kappa(x),\gamma\kappa(y))\leq \la^{-1+\delta_0+\epsilon_0}} \la^{2-3\delta_0} \la dx \lesssim \la^{1-\delta_0+2\e_0} .
    \end{multline*}
Using \eqref{eq: sA3}, we  have 
  \begin{equation}\label{eq: sA3b}
        \left|\int_{T^*\tilde\Omega}e^{-i\langle u-y,\eta\rangle }\sA(u,y,\xi)d\eta dy\right|\lesssim 1.
    \end{equation}
Thus, we have 
\begin{multline*}
\Bigl|\iint_{d_g(\kappa(x),\gamma\kappa(y))\leq \la^{-1+\delta_0+\epsilon_0}}  e^{i\langle u-x,\xi\rangle-i\langle u-y, \eta \rangle }\sA(u,x,\xi)\\\sA(u,y,\eta) 
        k_\la(\kappa(x)^{-1}\gamma \kappa(y))  dxd\xi dy  d\eta \Bigr|\lesssim \la^{1-\delta_0+2\e_0}.
\end{multline*}
If $d_g(\kappa(x), \gamma\kappa(y))\sim 2^j$ with $\la^{-1+\delta_0+\e_0}\lesssim 2^j\lesssim \la^{-1+2\delta_0+3\e_0}$, then for fixed $y$ and $x_1$, the coordinate \(x_2\) must lie in an interval of the form $|x_2-c(x_1,y)|\lesssim 2^{j} $ for some uniformly bounded function $c$. Thus, by \eqref{eq: Schwartz kernel for sA 2} and \eqref{eq: std bound for k}, for each fixed $y$,
\begin{multline*}
    \left|\int_{d_g(\kappa(x),\gamma\kappa(y))\sim 2^j  }e^{i\langle u-x,\xi\rangle }\sA(u,x,\xi) k(\kappa(x)^{-1}\gamma\kappa(y))d\xi dx\right|\\
    \lesssim\int_{|x_1-y_1|\lesssim \la^{-1+\delta_0+\e_0}}\left(\int_{|x_2-c(x_1,y)|\lesssim 2^{j}  } \la^{2-3\delta_0} \la^{\frac{1}{2}}2^{-\frac{j}{2}} dx_2\right) dx_1\lesssim\la^{\frac{3}{2}-2\delta_0+\e_0}2^{\frac{j}{2}}.
\end{multline*}
By the above and \eqref{eq: sA3b}, we have 
\begin{multline*}
\Bigl|\iint_{d_g(\kappa(x), \gamma\kappa(y))\sim 2^j}  e^{i\langle u-x,\xi\rangle-i\langle u-y, \eta \rangle }\sA(u,x,\xi)\\\sA(u,y,\eta) k_\la(\kappa(x)^{-1}\gamma \kappa(y))  dxd\xi dy  d\eta \Bigr|\lesssim \la^{\frac{3}{2}-2\delta_0+\e_0}2^{\frac{j}{2}}.
\end{multline*}
If we sum over $\la^{-1+\delta_0+\e_0}\lesssim 2^j\lesssim \la^{-1+2\delta_0+2\e_0}$, we obtain \textit{(b)}.

    The proof of \textit{(c)} is the same as Proposition~\ref{prop: bound for I_A} \textit{(b)}, by applying Lemma \ref{lem: Marshall rapid decay}. 
\end{proof}

Because of Proposition \ref{p}, to use the amplification method, we need a new estimate of Hecke returns, which counts how many times the Hecke operators map simultaneously a point and a geodesic segment close to themselves. We define, for $g\in G$, $z\in \cD$, and $0<\delta,\kappa\leq1$,
\begin{align*}
     \sN(g,z,n,\delta,\kappa)=\{\eta\in R(n)\mid d(g^{-1}\eta g,e)\leq1,\; d(g^{-1}\eta g,A)\leq\kappa,\; d_g(z,\eta z)\leq\delta\}.
\end{align*}

\begin{proposition}\label{prop: simultaneous return}
    Let $g\in G$ with $d(g,e)\lesssim1$, $z\in \cD$, and $0\le\delta,\kappa\leq1$. We have 
        \[
        |\sN(g,z,n,\delta,\kappa)|\ll_\epsilon \left(\frac{n}{\kappa\delta}\right)^\epsilon (n\sqrt{\kappa\delta}+1).
        \]
\end{proposition}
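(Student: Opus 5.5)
The plan is to turn the two geometric conditions into linear constraints on the integral coordinates of $\eta$, and then count lattice points. We split off one coordinate through the norm equation and handle what remains either by a convex-body count or by the quadratic-field argument of Lemma \ref{lem: Hecke return new}.

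\emph{Step 1: linear constraints.} Replace $z$ and $g$ by $g^{-1}z$ and the identity frame; since $d(g,e)\lesssim 1$ this only changes constants. Write $\iota(\eta)/\sqrt n=\left(\begin{smallmatrix}a&b\\ c&d\end{smallmatrix}\right)$ in coordinates adapted to $g$. The condition $d(g^{-1}\eta g,A)\le\kappa$ gives $|b|,|c|\lesssim\kappa$. Let $k_{z}$ be the stabilizer of $z$; moving $z$ by $O(1)$ costs only constants, and it turns $k_z$ into a bounded conjugate of $K$. The condition $d_g(z,\eta z)\le\delta$ then says that $g^{-1}\eta g$ is within $O(\delta)$ of $k_z$. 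Since $A$ and a conjugate of $K$ meet transversally at $\pm e$ (a hyperbolic and an elliptic one-parameter subgroup), both conditions together force $g^{-1}\eta g/\sqrt n$ to lie within $O(\kappa+\delta)$ of $\pm e$. More precisely, the traceless part $v=\eta-\tfrac12\mathrm{trd}(\eta)$ satisfies the following, for two fixed, $g$- and $z$-dependent but bounded and transversal linear forms $\ell_1,\ell_2$ on the traceless space:
\[
|\ell_1(v)|\lesssim\sqrt n\,\kappa,\qquad |\ell_2(v)|\lesssim \sqrt n\,\delta,\qquad |v|\lesssim \sqrt n\,(\kappa+\delta).
\]
Here $\ell_1$ is the "off-diagonal" direction and $\ell_2$ the direction transverse to $\mathrm{Lie}(k_z)$.

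\emph{Step 2: reduce to a planar count.} The element $\eta$ is an integral point $(x_0,x_1,x_2,x_3)$ of $R$, a lattice in $\BQ^4$ up to bounded index. The norm equation $\mathrm{nrd}(\eta)=n$ reads $x_0^2-Q(v)=n$ with $Q$ a fixed integral ternary form. Hence, given $v$, there are $O(1)$ choices of $x_0$, and it suffices to count lattice points $v$ in the region of Step 1 that also satisfy $Q(v)=x_0^2-n$. I project $v$ onto the plane spanned by $\ell_1,\ell_2$. The image lies in a parallelogram $P$ of area $\asymp n\kappa\delta$ inside a projected lattice of bounded covolume. For each projected point, the remaining coordinate together with $x_0$ satisfies a binary quadratic equation of fixed discriminant type. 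Its solutions are algebraic integers of norm $\ll n$ in a fixed quadratic order, with all archimedean sizes $\lesssim\sqrt n$, so each fiber contributes $\ll_\epsilon n^\epsilon$, exactly as at the end of the proof of Lemma \ref{lem: Hecke return new}.

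\emph{Step 3: count points in $P$, the main obstacle.} A convex-body count gives $\#(P\cap L)\lesssim n\kappa\delta+\sqrt n(\kappa+\delta)+1$. The area term is fine, since $n\kappa\delta\le n\sqrt{\kappa\delta}$. The perimeter term $\sqrt n\max(\kappa,\delta)$ is the obstacle when $P$ is long and thin. To remove it I will use a dichotomy.

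If $P\cap L$ contains two linearly independent points, the covolume bound gives $n\kappa\delta\gtrsim1$. The long side is then covered by $\sqrt{n}\max(\kappa,\delta)/ (\text{spacing})$ translates of a fundamental segment. Balancing against the short side, so that each short strip of width $\sqrt n\min(\kappa,\delta)$ contains $\lesssim \sqrt n\min+1$ rows, gives total $\lesssim n\kappa\delta+\sqrt{n\kappa\delta}\cdot(\ldots)$. I expect this to be bounded by $n\sqrt{\kappa\delta}+1$ after using $\kappa,\delta\le1$; I will verify this carefully.

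If all points of $P\cap L$ are collinear, the corresponding elements $\eta$ are of the form $x_0+t\,w$ for a fixed traceless $w$. They therefore lie in the commutative algebra $\BQ(w)$, which is a quadratic field because $D$ is a division algebra. The count is then $\ll_\epsilon n^\epsilon$ by the norm-equation argument of Lemma \ref{lem: Hecke return new}, with no perimeter loss.

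Combining the two cases with the $n^\epsilon$ fiber bound, and absorbing the $(\kappa\delta)^{-\epsilon}$ coming from dyadic bookkeeping in the thin case, should give $|\sN(g,z,n,\delta,\kappa)|\ll_\epsilon (n/\kappa\delta)^\epsilon(n\sqrt{\kappa\delta}+1)$.
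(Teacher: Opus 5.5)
There is a genuine gap: the reduction to a planar count in Step 2 rests on a false claim, and the degenerate case of Step 3 draws the wrong conclusion.

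\textbf{Step 2.} The map $v\mapsto(\ell_1(v),\ell_2(v))$ has as kernel a line determined by the real parameters $g,z$. For generic $(g,z)$ this line contains no nonzero lattice vector. The map is then injective on the rank-three lattice of traceless parts, so its image is a non-discrete subgroup of $\BR^2$, not a ``projected lattice of bounded covolume''. Consequently three things have no basis:
\begin{itemize}
\item the convex-body count in $P$;
\item the claim that two independent points force $n\kappa\delta\gtrsim1$;
\item the fibre argument, since ``the remaining coordinate together with $x_0$ satisfies a binary quadratic equation'' needs the fibre to be a rational coordinate direction.
\end{itemize}
If you instead project along a coordinate axis, the shadow of your Step 1 region is not $P$. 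You kept only one linear form per condition, so the region still has extent $\sqrt n(\kappa+\delta)$ in the third direction. Its shadow generically has area $\asymp n\max(\kappa,\delta)^2$; for $\delta\asymp1$ this is $\asymp n$, already larger than the target $n\sqrt\kappa$.

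\textbf{Step 3.} Collinear projected points only force the $v$'s into a plane. The corresponding $\eta$'s then lie in a three-dimensional subspace of $D$, which is not commutative, so the quadratic-field argument of Lemma \ref{lem: Hecke return new} does not apply. The remaining case is also left unverified (``I will verify this carefully'').

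\textbf{The missing idea.} Each condition has codimension two in the traceless space. Concretely, $v/\sqrt n$ lies within $O(\kappa)$ of the line $\BR\, gHg^{-1}$ with $H=\mathrm{diag}(1,-1)$, and within $O(\delta)$ of the line $\mathrm{Lie}(K_z)$, where $K_z$ is the stabilizer of $z$. These lines are uniformly transversal, one hyperbolic and one elliptic. Hence $v$ lies in a box with sides $\sqrt n\delta$, $\sqrt n\kappa$, $\sqrt n\min(\kappa,\delta)$.

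The right dichotomy is then commutativity, not collinearity. For $\eta_1,\eta_2\in\sN$, the commutator $[\eta_1,\eta_2]=[v_1,v_2]$ is an element of $R$ of size $O(n\kappa\delta)$.
\begin{itemize}
\item If $n\kappa\delta$ is below a small constant, all elements commute. They then lie in one quadratic field and number $\ll_\epsilon n^\epsilon$.
\item Otherwise, fix $(x_1,x_2)$. The shadow of the box is convex with area $O(n\kappa\delta)$ and perimeter $O(\sqrt n\max(\kappa,\delta))$. Each fibre has $\ll_\epsilon n^\epsilon$ solutions of $x_0^2+abx_3^2=n+ax_1^2+bx_2^2$ with $|x_0|,|x_3|\lesssim\sqrt n$. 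Finally, $n\min(\kappa,\delta)\gtrsim\max(\kappa,\delta)^{-1}\ge1\ge\max(\kappa,\delta)$ gives $\sqrt n\max(\kappa,\delta)\lesssim n\sqrt{\kappa\delta}$, so the perimeter loss disappears.
\end{itemize}

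\textbf{Comparison with the paper.} Repaired this way, your route is a valid alternative to the paper's argument, which never counts in three dimensions. The paper picks the coordinate $x_1$ whose coefficient $\tilde\beta$ or $\beta$ is large. It derives $|x_0^2\pm yx_1^2-n|\lesssim n\min(\kappa,\delta)$ with $x_1^2\lesssim n\max(\kappa,\delta)$. It counts $(x_0,x_1)$ via the Dirichlet-approximation Lemma \ref{Diophantine lemma}, then recovers $(x_2,x_3)$ from the norm form over $\BQ(\sqrt a)$.
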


 We will prove this proposition in the next section.

\begin{proof}[Proof of \eqref{eq: improved KN sup}]
    Still because of Lemma \ref{lem 3.10}, we may bound $\| \sA\psi\|_{L^\infty(X)}$ instead. By Proposition \ref{prop: amplification inequality supnorm}, we have, for $\kappa(u)\in\supp\sA\psi$,
    \begin{align}\label{eq: supnorm amplified inequality}
        \left|\sA\cT\psi(\kappa(u))\right|^2\leq\sum_{m,n\leq N} |\alpha_m  \alpha_n | \sum_{d|(m,n)} \frac{d}{\sqrt{mn}}\sum_{\gamma\in R(mn/d^2)} |k_\sA(\la,u,\gamma)|.
    \end{align}
    Here $N>0$ is a large parameter, which is some power of $\la$ and will be chosen later, and the amplifier $\cT$ is chosen to be the same as in \ref{subsubsection amplifier}. We will split the sum for $\gamma\in R(mn/d^2)$ in \eqref{eq: supnorm amplified inequality} into different ranges corresponding to those cases in Proposition \ref{p}.  
    
    Fix $0<\epsilon_0\ll\delta_0$. By assuming the support of $k_\la$ and $\Omega$ are sufficiently small, we assume that $k_\sA(\la,u,\gamma)\neq0$ only if $d_g(\kappa(u),\gamma\kappa(u))\leq1$ and $d(g_0^{-1}\gamma g_0,e)\leq1$.   
    Note that, by Proposition \ref{p} \textit{(c)}, if $d(g_0^{-1}\gamma g_0,A)\geq \la^{-\delta_0+2\epsilon_0}$, then $k_\sA(\la,u,\gamma)$ decays rapidly, so for $m,n\leq N$, $d|(m,n)$, we have
    \begin{align}\label{eq: supnorm range 2}
        \sum_{\substack{\gamma\in R(mn/d^2)\\d(g_0^{-1}\gamma g_0,A)\geq \la^{-\delta_0+2\epsilon_0}}} |k_\sA(\la,u,\gamma)|\lesssim_{\e_0,A}\la^{-A}.
    \end{align}
    Hence we can restrict $\gamma$ to the smaller region $d(g_0^{-1}\gamma g_0,A)< \la^{-\delta_0+2\epsilon_0}$. If $d_g(\kappa(u),\gamma\kappa(u))\leq \la^{-1+2\delta_0+2\epsilon_0}$, then we have
    \begin{align}\label{eq: supnorm range 1a}
    \begin{split}
        &\sum_{\substack{\gamma\in R(mn/d^2)\\d_g(\kappa(u),\gamma\kappa(u))\leq \la^{-1+2\delta_0+2\epsilon_0}\\d(g_0^{-1}\gamma g_0,A)<\la^{-\delta_0+2\epsilon_0}}} |k_\sA(\la,u,\gamma)|\\
        \lesssim&_{\e_0} \la^{1-\delta_0+2\e_0} \left|\sN(g_0,\kappa(u),\frac{mn}{d^2},\la^{-1+2\delta_0+2\epsilon_0},\la^{-\delta_0+2\epsilon_0})\right|\\
        \lesssim&_\e  N^\e\la^{\e+4\e_0}  \left( \frac{mn}{d^2}\la^{\frac{1-\delta_0}{2}}+\la^{1-\delta_0}\right),
    \end{split}
    \end{align}
    by applying \textit{(b)} in Proposition \ref{p} and Proposition \ref{prop: simultaneous return}.

    The last range is
    \begin{align}\label{eq: supnorm range 3}
        &\sum_{\substack{\gamma\in R(mn/d^2)\\
        \la^{-1+2\delta_0+2\epsilon_0}< d_g(\kappa(u),\gamma\kappa(u))\leq1
        \\d(g_0^{-1}\gamma g_0,A)< \la^{-\delta_0+2\epsilon_0}}} |k_\sA(\la,u,\gamma)|=\sum_{k=1}^{(1-2\delta_0-2\epsilon_0)\log_2\la} I_k.
    \end{align}
    Here we divide the sum further dyadically, and $I_k$ is the sum of $|k_\sA(\la,u,\gamma)| $ over $\gamma\in R(mn/d^2)$ satisfying $d_g(\kappa(u),\gamma\kappa(u))\in(\la^{-1+2\delta_0+2\epsilon_0}2^{k-1}, \la^{-1+2\delta_0+2\epsilon_0}2^{k}]$ and $d(g_0^{-1}\gamma g_0,A)< \la^{-\delta_0+2\epsilon_0}$. By Proposition \ref{p} and Proposition \ref{prop: simultaneous return} \textit{(a)}, we obtain
    \begin{align*}
        I_k &\lesssim |\sN(g_0,\kappa(u),\frac{mn}{d^2},\la^{-1+2\delta_0+2\epsilon_0}2^{k},\la^{-\delta_0+2\epsilon_0})|\left(\la^{\frac{1}{2}}\left(\la^{-1+2\delta_0+2\epsilon_0}2^{k}\right)^{-\frac{1}{2}}\right)\\
        &\lesssim_\e N^\e\la^\e\left(\frac{mn}{d^2} \la^{\frac{-1+\delta_0}{2}+2\e_0}2^{\frac{k}{2}} +1 \right)\la^{1-\delta_0-\e_0}2^{-\frac{k}{2}}\\
        &= N^\e\la^\e\left(\frac{mn}{d^2} \la^{\frac{1-\delta_0}{2}+\e_0} + \la^{1-\delta_0-\e_0}2^{-\frac{k}{2}} \right),
    \end{align*}
    which shows that \eqref{eq: supnorm range 3} is bounded by
    \begin{align*}
        \sum_{k=1}^{(1-2\delta_0-2\epsilon_0)\log_2\la} N^\e\la^\e\left(\frac{mn}{d^2} \la^{\frac{1-\delta_0}{2}+\e_0} + \la^{1-\delta_0-\e_0}2^{-\frac{k}{2}} \right)\lesssim_\e N^\e \la^\e \left(\frac{mn}{d^2} \la^{\frac{1-\delta_0}{2}+\e_0} + \la^{1-\delta_0-\e_0}\right).
    \end{align*}
    Thus, by combining this with  \eqref{eq: supnorm range 2} and \eqref{eq: supnorm range 1a}, and choosing $\e_0$ arbitrarily small, we have, for $m,n\leq N$, $d|(m,n)$,
    \begin{align*}
        \sum_{\gamma\in R(mn/d^2)} |k_\sA(\la,u,\gamma)|\lesssim N^\e \la^\e \left(\frac{mn}{d^2} \la^{\frac{1-\delta_0}{2}} + \la^{1-\delta_0}\right).
    \end{align*}
    Using \eqref{eq: supnorm amplified inequality}, we have
    \begin{align}\label{eq 335}
        \left|\sA\cT\psi(\kappa(u))\right|^2\lesssim_\e N^\e \la^\e\sum_{m,n\leq N} |\alpha_m  \alpha_n | \sum_{d|(m,n)} \left(\frac{\sqrt{mn}}{d} \la^{\frac{1-\delta_0}{2}} + \frac{d}{\sqrt{mn}} \la^{1-\delta_0}\right).
    \end{align}
    Recall that by \eqref{eq 323}, we have
    \begin{align*}
          \sum_{m,n\leq N} \sum_{d|(m,n)}|\alpha_m\alpha_n|\frac{d}{\sqrt{mn}}\lesssim_\epsilon N^\epsilon \sum_{n\leq N} |\alpha_n|^2.
     \end{align*}
    We also have
     \begin{align*}
         \sum_{m,n\leq N}  \sum_{d|(m,n)}|\alpha_m\alpha_n|\frac{\sqrt{mn}}{d}\lesssim_\epsilon N^{1+\epsilon} \left( \sum_{n\leq N} |\alpha_n|\right)^2.
     \end{align*}
     Hence from \eqref{eq 335} we have
     \begin{align*}
        \left|\sA\cT\psi(\kappa(u))\right|^2\lesssim_\e N^\e \la^\e\left( N\la^{\frac{1-\delta_0}{2}}\left( \sum_{n\leq N} |\alpha_n|\right)^2  + \la^{1-\delta_0}\sum_{n\leq N} |\alpha_n|^2\right).
    \end{align*}
    Combining this with \eqref{eq: moment for alpha} and \eqref{eq: lower bound for amplifier eigenvalue} gives
    \begin{align*}
        N^{1-\e}\left|\sA\psi(\kappa(u))\right|^2\lesssim_\e N^\e \la^\e\left( N^2\la^{\frac{1-\delta_0}{2}}  + N^{\frac{1}{2}}\la^{1-\delta_0}\right).
    \end{align*}
    Choosing $N=\la^{\frac{1-\delta_0}{3}}$ gives the desired bound $|\sA\psi(\kappa(u))|\lesssim_\e \la^{\frac{5}{12}(1-\delta_0)+\e}.$
\end{proof}

\section{Counting}\label{sec: counting}
To prove Proposition \ref{prop: simultaneous return}, we first need a Diophantine lemma, which slightly modifies the corresponding Diophantine lemmas \cite[Lemma 1.4]{IS95} and \cite[Lemma 3.2]{Mar16}.
\begin{lemma}\label{Diophantine lemma}
    Let $A,B>0$. Let $1\gtrsim y\ge A$, $0< \kappa\leq 1$, and $0< \delta\leq 1$. Then we have
    \begin{align}\label{eq: ISDL}
        \left| \left\{ (r,s)\in\BZ^2\mid  |r^2+ys^2 -n|\leq Bn\delta,\; s^2\leq Bn\kappa\right\} \right|\lesssim \left( \frac{n}{\delta}\right)^\e\left( n \left((\delta\kappa)^{\frac{1}{2}}+\delta\right)  +1\right),
    \end{align}
    and
    \begin{align}\label{eq: MDL}
        \begin{split}\left| \left\{ (r,s)\in\BZ^2\mid  |r^2-ys^2 -n|\leq Bn\kappa,\;r^2\leq Bn ,\;s^2\leq Bn\delta\right\} \right|\\
        \lesssim \left( \frac{n}{\kappa}\right)^\e\left( n \left((\delta\kappa)^{\frac{1}{2}}+\kappa\right)  +1\right),\end{split}
    \end{align}
    for all $\e>0$, where the implied constants depend only on $A$, $B$, and $\e$.
\end{lemma}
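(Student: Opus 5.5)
The plan is to count lattice points by fixing one coordinate and bounding the number of admissible values of the other. The only arithmetic input is the standard divisor-type bound: for an integer $m\neq 0$, the number of representations $m=r^2+ys^2$ with $|r|,|s|\lesssim\sqrt{n}$ (or $m=r^2-ys^2$ in the same box, when $y$ is a positive non-square integer) is $\lesssim_\e n^\e$. In the applications $y$ is essentially the integer $|ab|$ up to a harmless rescaling; the hypothesis $y\ge A$ is only used to make $y\sim 1$. We follow \cite[Lemma 1.4]{IS95} for \eqref{eq: ISDL} and \cite[Lemma 3.2]{Mar16} for \eqref{eq: MDL}, with an extra dyadic parameter to separate the $\delta$ and $\kappa$ constraints.

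For \eqref{eq: ISDL}, first count each value $s$ with $s^2\le Bn\kappa$ separately. The condition $|r^2-(n-ys^2)|\le Bn\delta$ confines $r^2$ to an interval of length $2Bn\delta$. Since $r^2\lesssim n$, this forces $|r|$ into an interval of length $\lesssim n\delta/\sqrt{n}+\sqrt{n\delta}\lesssim\sqrt{n\delta}$, so there are $\lesssim\sqrt{n\delta}+1$ choices of $r$. The number of $s$ is $\lesssim\sqrt{n\kappa}+1$. The trivial product bound is therefore
\[
(\sqrt{n\kappa}+1)(\sqrt{n\delta}+1)=n\sqrt{\kappa\delta}+\sqrt{n\kappa}+\sqrt{n\delta}+1 .
\]
This is too weak only in the cross terms. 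To remove them, regroup by the integer $m=r^2+ys^2$, which ranges over $\lesssim n\delta+1$ values, each carrying $\lesssim n^\e$ pairs. That gives $\lesssim n^\e(n\delta+1)$.

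Take the minimum of the two counts. Up to the $\e$-losses, $\min\bigl(n\sqrt{\kappa\delta}+\sqrt{n\kappa}+\sqrt{n\delta}+1,\;n\delta+1\bigr)$ is bounded by $n\sqrt{\kappa\delta}+n\delta+1$ in the following cases:
\begin{itemize}
\item If $n\delta\le 1$, the second bound gives $O(n^\e)$.
\item If $\sqrt{n\delta}\ge 1$, then $\sqrt{n\delta}\le n\delta$.
\item If $\sqrt{n\kappa}\le 1$, that cross term is absorbed by $1$.
\item Otherwise both $\sqrt{n\kappa}\ge 1$ and $\sqrt{n\delta}\ge 1$, so $\sqrt{n\kappa}\le\sqrt{n\kappa}\,\sqrt{n\delta}=n\sqrt{\kappa\delta}$.
\end{itemize}
The $(n/\delta)^\e$ loss comes from counting over the range of $m$ with divisor-bound losses, with $\log$ factors from $\delta$ absorbed.

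For \eqref{eq: MDL} the roles swap. Now $s^2\le Bn\delta$ and $|r^2-ys^2-n|\le Bn\kappa$. For fixed $s$, $r^2$ lies in an interval of length $\lesssim n\kappa$ around $n+ys^2\sim n$, so there are $\lesssim\sqrt{n}\,\kappa+1\le\sqrt{n\kappa}+1$ choices of $r$. Combined with $\lesssim\sqrt{n\delta}+1$ choices of $s$, this gives $n\sqrt{\kappa\delta}+\sqrt{n\kappa}+\sqrt{n\delta}+1$. Alternatively, grouping by $m=r^2-ys^2$, which takes $\lesssim n\kappa+1$ values near $n$ (so $m\ne 0$ for small $\kappa$), gives $n^\e(n\kappa+1)$ via the bound on representations by the indefinite form $r^2-ys^2$ in a box. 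The same case analysis as above yields $(n/\kappa)^\e\bigl(n(\sqrt{\delta\kappa}+\kappa)+1\bigr)$.

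The main obstacle is the representation bound for the indefinite form. For $r^2-ys^2=m$ there can be infinitely many solutions because of units, so the bound must use the box $|r|,|s|\lesssim\sqrt n$: solutions correspond to elements of norm $m$ in $\BZ[\sqrt{y}]$ with bounded archimedean embeddings. As in Lemma~\ref{lem: Hecke return new}, there are $\lesssim_\e n^\e$ such elements, since ideals of norm $m$ number $\ll m^\e$, and only $O(\log n)$ unit multiples fit in the box. A second point to check is that $y$ may not literally be an integer. I expect to handle this by arguing as in \cite{IS95,Mar16}, where $y$ arises as a fixed rational multiple of an integer depending only on $D$, so that clearing denominators costs only constants depending on $A$ and $B$.
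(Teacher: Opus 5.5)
There is a genuine gap. Your counting treats $y$ as an integer, or as a fixed rational with bounded denominator. The lemma, however, is stated for arbitrary real $y\in[A,O(1)]$, and that generality is exactly what the paper uses.

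In the proof of Proposition~\ref{prop: simultaneous return}, \eqref{l1} becomes $(Ex_0)^2+\frac{a}{\tilde\beta^2}(Ex_1)^2=E^2n+O(n\delta)$. So $y=a/\tilde\beta^2$, where $\tilde\beta$ is a coefficient of the quadratic form attached to the arbitrary point $z$. Similarly, in \eqref{l1'} one has $y=a/\beta^2$, with $\beta$ determined by the arbitrary geodesic $gAg^{-1}$. Your belief that $y$ is a fixed rational multiple of an integer depending only on $D$ is therefore false.

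Consequently $r^2\pm ys^2$ is not an integer. Your step ``regroup by the integer $m$, which ranges over $\lesssim n\delta+1$ (resp.\ $n\kappa+1$) values, each carrying $\lesssim n^\e$ pairs'' has no justification.

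This is not a peripheral point. Your own case analysis shows that for $n\delta\ge 1$ (resp.\ $n\kappa\ge 1$) the trivial product count already gives the claimed bound. The entire content of the lemma therefore lies in the regime $n\delta<1$ (resp.\ $n\kappa<1$), which is precisely where you invoke integrality. Clearing denominators does not rescue the argument either, because the implied constants must depend only on $A$, $B$ and $\e$, not on a denominator of $y$.

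The missing idea is Dirichlet approximation of $y$:
\begin{itemize}
\item Choose coprime $p,q$ with $1\le q\le Q$ and $|p/q-y|\le 1/(qQ)$, and multiply through by $q$.
\item For \eqref{eq: ISDL}, the constraint $s^2\le Bn\kappa$ controls the approximation error, giving $|qr^2+ps^2-qn|\le Bn\delta q+Bn\kappa/Q$.
\item Taking $Q=(\kappa/\delta)^{1/2}+A^{-1}$ balances the two terms; the $A^{-1}$ guarantees $p\ge 1$. This confines the \emph{integer} $qr^2+ps^2$ to a window of length $\lesssim n((\delta\kappa)^{1/2}+\delta)$. This balancing, not the product $\sqrt{n\kappa}\sqrt{n\delta}$, is where the $(\delta\kappa)^{1/2}$ in the statement comes from.
\item You then need representation bounds for $qr^2+ps^2=m$ that are uniform in $p,q$, obtained by counting elements of norm $qm$ in $\BQ(\sqrt{-pq})$. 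Since $q\lesssim\delta^{-1/2}$, this produces the $(n/\delta)^\e$ loss.
\item For \eqref{eq: MDL}, take $Q=(\delta/\kappa)^{1/2}+A^{-1}$ and use $s^2\le Bn\delta$ to control the error. Your units-in-a-box argument is the right tool here, but it must be run in $\BQ(\sqrt{pq})$ with $p,q$ varying. The box then has size $\lesssim\sqrt{n}\,Q$, and the ideal and unit counts give the $(n/\kappa)^\e$ loss.
\end{itemize}
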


\begin{proof}
    We first prove \eqref{eq: ISDL}. For $Q\geq1$, we can find coprime integers $p,q$ with $1\leq q\leq Q$ such that
    \[
    \left|\frac{p}{q}-y\right|\leq\frac{1}{qQ}.
    \]
    Thus $|r^2+ys^2 -n|\leq Bn\delta$ implies
    \begin{align*}
        |qr^2+ps^2-qn|\leq q\left( |r^2+ys^2 -n|+\left|\frac{p}{q}-y\right||s|^2\right)\leq Bn\delta q+\frac{Bn\kappa}{Q}.
    \end{align*}
    Here we have also used $|s|^2\leq Bn\kappa$. We choose $Q=\left(\frac{\kappa}{\delta}\right)^{\frac{1}{2}} + A^{-1}$. This choice implies $p\geq1$ since $y\geq A$, and gives
    \begin{align*}
        |qr^2+ps^2-qn|\leq 2Bn \left(\kappa\delta\right)^{\frac{1}{2}}+ \frac{Bn\delta}{A}\leq C n \left((\delta\kappa)^{\frac{1}{2}}+\delta\right),\quad\text{ where }C=2B+\frac{B}{A}.
    \end{align*}
    Hence, the number of pairs $(r, s)$ in \eqref{eq: ISDL} is bounded by
    \begin{align}\label{eq: IS sum}
        \sum_{|m-qn|\leq C n  \left((\delta\kappa)^{\frac{1}{2}}+\delta\right)} | \left\{ (r,s)\mid qr^2+ps^2=m  \right\} |.
    \end{align}
    For each $m$, $| \left\{ (r,s)\mid qr^2+ps^2=m  \right\} |\lesssim_\e m^\e$. To see this, we can moreover assume that $q$ and $p$ are square-free by incorporating the square parts of $q$ and $p$ into $r$ and $s$ respectively. With this done a solution of $qr^2+ps^2=m$ gives an algebraic integer $rq+s\sqrt{-pq}\in \BQ(\sqrt{-pq})$ of norm $qm$. The latter is bounded by $O_\e((qm)^\e)$ using the finiteness of the units in $\BQ(\sqrt{-pq})$ and the bound for the divisor function. It follows that the \eqref{eq: IS sum} has a bound of the form as the right-hand side of \eqref{eq: ISDL}.

    To prove \eqref{eq: MDL}, we choose $Q=\left(\frac{\delta}{\kappa}\right)^{\frac{1}{2}} + A^{-1}$ and similarly choose the corresponding $p,q,C$, so that the number of pairs $(r, s)$ in \eqref{eq: MDL} can be bounded by
    \begin{align}\label{eq: M sum}
        \sum_{|m-qn|\leq C n  \left((\delta\kappa)^{\frac{1}{2}}+\kappa\right)} | \left\{ (r,s)\mid qr^2-ps^2=m,\;r^2\leq Bn ,\;s^2\leq Bn  \right\} |.
    \end{align}
    Let $\cO$ denote the ring of integers in the field $\BQ(\sqrt{pq})$, let $N$ denote the norm map, and let $|\cdot|_1$ and $|\cdot|_2$ denote its two archimedean valuations. Every solution of $qr^2-ps^2=m$ determines an element $rq+s\sqrt{pq}\in\cO$, whose norm is $qm$. We therefore have
    \begin{multline*}
        | \left\{ (r,s)\mid qr^2-ps^2=m,\;r^2\leq Bn ,\;s^2\leq Bn  \right\} |\\
        \leq |\{z\in\cO\mid Nz=qm,\; |z|_1+|z|_2\leq 2\sqrt{Bn}(q+\sqrt{pq}) \}|.
    \end{multline*}
    The set on the right-hand side maps to the set of ideals in $\cO$ with norm $qm$, and the number of such ideals is bounded by $O_\e((qm)^\e)$ by bounding the divisor function. The fibers of this map are orbits under multiplication by units, and the condition
    \[|z|_1+|z|_2\leq 2\sqrt{Bn}(q+\sqrt{pq})\lesssim n\delta^{-\frac{1}{2}}\]
    implies that the fibers have size $\lesssim_\e (n/\kappa)^\e$. Summing this bound in \eqref{eq: M sum} completes the proof.
\end{proof}

\begin{proof}[Proof of Proposition \ref{prop: simultaneous return}]
Recall that we have $g\in G$ with $d(g,e)\lesssim1$, and $z\in \cD$. Let $[\alpha,\beta,\gamma]$ be the quadratic form associated with the infinite-length geodesic in $\BH$ corresponding to $g_0 A g_0^{-1}$ (which is a semicircle or a vertical line), so that
\begin{align}\label{eq: discrimiant 1}
    \beta^2-4\alpha\gamma=1
\end{align}
and the roots of $\alpha z^2+\beta z+ \gamma$ are the endpoints of that geodesic. Note that the case when one endpoint is at $i\infty$ corresponds to $\alpha=0$. We may parametrize $g A g^{-1}$ as
\begin{align}\label{eq: parametrization of A}
    g A g^{-1}=\left\{\begin{pmatrix}
        t-\beta u& -2\gamma u\\
        2\alpha u& t+\beta u
    \end{pmatrix} \mid t^2-u^2=1, t>0  \right\}.
\end{align}
By the boundedness of $g$, we can assume that $\alpha,\beta,\gamma\lesssim 1$. We use another quadratic form $[\tilde\alpha,\tilde\beta,\tilde\gamma]$ with
\begin{align}\label{eq: discrimiant -1}
    \tilde\beta^2-4\tilde\alpha\tilde\gamma=-1
\end{align}
be the quadratic form associated with $z$, that is, $z$ is the unique solution of $\tilde\alpha z^2+ \tilde\beta z+ \tilde\gamma$ in $\BH$. Similarly, we have $\tilde\alpha,\tilde\beta,\tilde\gamma\lesssim1$, and the stabilizer $K_z$ of $z$ in $\SL(2,\BR)$ can be parametrized as
\begin{align}\label{eq: parametrization of K}
    K_z=\left\{ \begin{pmatrix}
        t-\tilde\beta u & 2\tilde\gamma u\\
        2\tilde\alpha u& t+\tilde\beta u 
    \end{pmatrix} \mid t^2+u^2=1 \right\}.
\end{align}
If $\eta\in R(n)$, under the embedding $\iota$, defined by \eqref{eq: embedding of quaternion}, we can write
\begin{align*}
    \iota(\eta) = \begin{pmatrix}
        x_0+x_1\sqrt{a}&b(x_2+x_3\sqrt{a})\\
        x_2-x_3\sqrt{a}& x_0-x_1\sqrt{a}
    \end{pmatrix}
\end{align*}
with $\det\iota(\eta)=n$, $Ex_i\in \BZ$ for some fixed positive integer $E$ (which is determined from the order $R$). If $\eta\in R(n)$ satisfies $d_g(z,\eta z)\leq\delta$ and $d(g^{-1}\eta g,A)\leq\kappa$, by \eqref{eq: parametrization of A} and \eqref{eq: parametrization of K}, there exist $t_1,u_1, t_2,u_2$ such that $0<t_1\lesssim1$, $u_1\lesssim1$, $t_1^2-u_1^2=1$, $t_2^2+u_2^2=1$, and
\begin{align}
    \frac{x_0}{\sqrt{n}} &= t_1+ O(\kappa)=t_2+O(\delta),\label{eq: x0}\\
    \frac{x_1\sqrt{a}}{\sqrt{n}}&=-\beta u_1+O(\kappa)=-\tilde\beta u_2+O(\delta),\label{eq: x1}\\
    \frac{x_2}{\sqrt{n}}&=\left(\alpha-\frac{\gamma}{b}\right) u_1 + O(\kappa)=\left(\tilde\alpha+\frac{\tilde\gamma}{b}\right) u_2 + O(\delta),\notag\\
    \frac{x_3\sqrt{a}}{\sqrt{n}}&=-\left(\alpha+\frac{\gamma}{b}\right) u_1 + O(\kappa)=-\left(\tilde\alpha-\frac{\tilde\gamma}{b}\right) u_2 + O(\delta).\notag\
\end{align}

Let us first assume $\kappa>\delta$.
Note that, by \eqref{eq: discrimiant -1}, at least one of $|\tilde\beta|$ and $|\tilde\alpha\pm\frac{\tilde\gamma}{b}|$ must be $\ge \frac{1}{10|b|}$. 
Let us assume $|\tilde\beta|\ge \frac{1}{10|b|}$, as the other two cases can be treated similarly. 
From \eqref{eq: x0} and \eqref{eq: x1}, we have 
\begin{equation}\label{l1}
    1=t_2^2+u_2^2=\frac{x_0^2}{n}+\frac{x_1^2a}{n\tilde \beta^2}+O(\delta).
\end{equation}
If $|\beta|\le \kappa$, then by \eqref{eq: x1}, 
$|x_1|\lesssim \sqrt{n}\kappa\leq\sqrt{n\kappa}$. If $|\beta|> \kappa$, note that by \eqref{eq: x0} and \eqref{eq: x1}, we also have 
\begin{equation}\label{l2}
    1=t_1^2-u_1^2=\frac{x_0^2}{n}-\frac{x_1^2a}{n \beta^2}+O(\kappa|\beta|^{-1}).
\end{equation}
Using \eqref{l1} and \eqref{l2}, we have 
\begin{equation*}
    0=\frac{x_1^2a}{n\tilde \beta^2}+\frac{x_1^2a}{n \beta^2} +O(\kappa|\beta|^{-1})+O(\delta)\Rightarrow |x_1|\lesssim \sqrt{n\kappa}.
\end{equation*}
So for any values of $\beta$, we have 
 $|x_1|\lesssim \sqrt{n\kappa}$. 
If we combine this with \eqref{l1} and applying \eqref{eq: ISDL}, the number of choices
of $x_0,x_1$ is at most
\begin{align}\label{eq: bd for x0 x1}
    \left( \frac{n}{\delta}\right)^\e\left( n \left((\delta\kappa)^{\frac{1}{2}}+\delta\right)  +1\right)\lesssim \left( \frac{n}{\delta}\right)^\e\left( n (\delta\kappa)^{\frac{1}{2}} +1\right).
\end{align}
Using $\det\iota(\eta)=n$, for each of these $x_2, x_3$ must satisfy
\begin{align}\label{eq: det=n}
    -bx_2^2+ab x_3^2=n-x_0^2+ax_1^2,
\end{align}
and we have $x_j\lesssim\sqrt{n}$. By working with the field $\BQ(\sqrt{a})$ in the proof of Proposition \ref{Diophantine lemma}, we see that the number of choices for $x_2$ and $x_3$ is $O_\e(n^\e)$.

If $\kappa\leq\delta$, similarly, at least one of $|\beta|$ and $|\alpha\pm\frac{\gamma}{b}|$ must be $\ge \frac{1}{10|b|}$ by \eqref{eq: discrimiant 1}, so we may just assume $|\beta|\ge \frac{1}{10|b|}$ in our proof. From \eqref{eq: x0} and \eqref{eq: x1}, we have 
\begin{equation}\label{l1'}
    1=t_1^2-u_1^2=\frac{x_0^2}{n}-\frac{x_1^2a}{n \beta^2}+O(\kappa).
\end{equation}
By discussing the size of $|\tilde\beta|$ similarly as in the previous case, we can obtain $|x_1|\lesssim\sqrt{n\delta}$. By combining this with \eqref{l1'} and applying \eqref{eq: MDL}, the number of choices of $x_0,x_1$ is at most
\begin{align*}
    \left( \frac{n}{\kappa}\right)^\e\left( n \left((\delta\kappa)^{\frac{1}{2}}+\kappa\right)  +1\right)\lesssim \left( \frac{n}{\kappa}\right)^\e\left( n (\delta\kappa)^{\frac{1}{2}} +1\right).
\end{align*}
The number of choices of $x_2,x_3$ can be bounded by $O(n^\e)$ using \eqref{eq: det=n} by the same argument as in the previous case.
\end{proof}

\bibliographystyle{alpha}
\bibliography{refs}

\end{document}